\documentclass[a4paper,12pt]{amsart}
\pdfoutput=1

\usepackage[english]{babel}
\usepackage{bbold}
\usepackage{stmaryrd}
\usepackage{verbatim}
\usepackage[utf8]{inputenc}
\usepackage{amsmath}
\numberwithin{equation}{subsection}
\usepackage{amssymb}
\usepackage{amsthm}
\usepackage{graphicx}
\usepackage{mathtools}
\usepackage[T1]{fontenc}
\usepackage{amsthm}
\usepackage{setspace}
\usepackage{amsfonts}
\usepackage{eucal}
\usepackage{mathrsfs}


\usepackage{pictexwd,dcpic}
\usepackage{tikz}
\usepackage{tqft}
\usepackage{pigpen}
\usepackage{xcolor}
\usepackage{pdfpages}
\usepackage{geometry}

\usepackage{hyperref}
\hypersetup{
	colorlinks=true,
	linkcolor=blue,
	filecolor=magenta,      
	urlcolor=cyan,
}

\usepackage{thm-restate}
\usepackage{tikz-cd}
\usepackage{float}

\usepackage{blkarray}

\usepackage[all,cmtip]{xy}
\UseComputerModernTips

\newtheorem*{theorem*}{Theorem}
\newtheorem{theorem}{Theorem}[section] 
\newtheorem{lemma}[theorem]{Lemma}
\newtheorem*{lemma*}{Lemma}      

\newtheorem{proposition}[theorem]{Proposition}

\newtheorem*{conj*}{Conjecture}

\theoremstyle{definition}
\newtheorem{definition}[theorem]{Definition}
\newtheorem*{definition*}{Definition}

\newtheorem{them}{Theorem}

\newtheorem{themprime}{Theorem}

\newtheorem{thei}{Theorem}

\theoremstyle{remark}
\newtheorem{remark}[theorem]{Remark}
\numberwithin{equation}{section}

\theoremstyle{definition}

\newtheorem{conjecture}{\bf Conjecture}

\newcommand{\C}{{\mathbb{C}}}
\newcommand{\N}{{\mathbb{N}}}
\newcommand{\PP}{{\mathbb{CP}}}

\newcommand{\Z}{{\mathbb{Z}}}

\newcommand{\M}{{\mathcal{M}}}

\newcommand{\cp}{\mathcal{P}}
\newcommand{\x}{\boldsymbol{x}}
\newcommand{\z}{\boldsymbol{z}}
\newcommand{\Mkm}{\mathcal{M}_{k,m}}
\newcommand{\Fkm}{F_{k,m}}
\newcommand{\y}{\boldsymbol{y}}


\newcommand{\ud}{\underline}
\newcommand{\ob}{\llbracket}
\newcommand{\cb}{\rrbracket}


\DeclareMathOperator{\Span}{Span}

\DeclareMathOperator{\Spec}{Spec}
\DeclareMathOperator{\Ker}{Ker}

\DeclareMathOperator{\Id}{Id}

\DeclareMathOperator{\Imm}{Im}

\renewcommand{\b}{\beta}

\newcommand{\om}{\omega}

\newcommand{\D}{\mathbb{D}}
\newcommand{\p}{\pi}



\setcounter{tocdepth}{2}

\title[]
{Fixed points of Koch's maps } 

\author[ Van Tu Le ]{Van Tu Le}
\address{Van Tu Le\\ Dipartimento di Matematica\\
	Università degli Studi di Roma "Tor Vergata"\\Via della Ricerca Scientifica 1 - 00133 Roma\\ Italy} 
\email{levantu.hp@gmail.com}
\date{\today}
\thanks{2020 Mathematics Subject Classification: 32H50, 37F99.}
\thanks{\textit{Keywords: holomorphic dynamics, holomorphic endomorphisms, fixed points, eigenvalues.} }

\begin{document}
	\begin{abstract}
		We study endomorphisms constructed by Sarah Koch in \cite{koch2013teichmuller} and we focus on the eigenvalues of the differential of such maps at its fixed points. In \cite{koch2013teichmuller}, to each post-critically finite unicritical polynomial, Koch associated a post-critically algebraic endomorphism of $\PP^k$. Koch showed that the eigenvalues of the differentials of such maps along periodic cycles outside the post-critical sets have modulus strictly greater than $1$. In this article, we show that the eigenvalues of the differentials at fixed points are either $0$ or have modulus strictly greater than $1$. This confirms a conjecture proposed by the author in his thesis. We also provide a concrete description of such values in terms of the multiplier of a unicritical polynomial. 
	\end{abstract}
\maketitle
\section{Introduction}
Let $M$ be either $\C^n$ or $\PP^n$ and $f \colon M \to M$ be a holomorphic endomorphism. Denote by $f^{\circ m} = f \circ f \circ \ldots \circ f$ the $m$-th composition of $f$.  A point $z \in M$ is called a \textit{preperiodic point} of \textit{preperiod} $k$ and of \textit{period} $m$ if $f^{\circ(k+m)}(z) =f^{\circ k}(z) $ and $k,m$ are the smallest integers satisfying such a property. A preperiodic point of preperiod $0$ is called a \textit{periodic point}. A periodic point of period $1$ is called a \textit{fixed point}. Given a periodic point $z$ of period $m$, a value $\lambda \in \C$ is called \textit{an eigenvalue of $f$ along the orbit of $z$} (or \textit{at the fixed point $z$}) if $\lambda$ is an eigenvalue of the differential $D_z f^{\circ m} \colon T_{z} M \to T_{z} M $. 

A point $z \in M$ is called \textit{a critical point} if the differential $D_{z} f \colon T_{z} M \to T_{f(z)} M$ is not invertible. The set $C(f)$ containing all critical points of $f$ is called the \textit{critical set} of $f$. The set
\[
PC(f) \coloneqq \bigcup\limits_{j \ge 1} f^{\circ j} (C(f))
\]
is called the \textit{post-critical set} of $f$. The endomorphism $f$ is called \textit{post-critically algebraic} if $PC(f)$ is an algebraic set of codimension one in $M$. When $\dim M = 1$, post-critically algebraic rational maps are called \textit{post-critically finite} rational maps. 

The family of post-critically finite rational maps is one of the most important families of maps in the theory of one dimensional complex dynamics. In higher dimension, post-critically algebraic endomorphisms are interesting family of maps since many results, which are well-known for post-critically finite rational maps, remain unknown. We refer to \cite{rong2008fatou},\cite{astorg2018dynamics},\cite{ingram2019post},\cite{gauthier2019geometric}, \cite{ji2020structure}, \cite{van2020periodic} for some recent studies about post-critically algebraic endomorphisms. In this article, we focus on the following conjecture proposed by the author in his thesis \cite{le2020dynamique}.

\begin{conjecture}\label{conj 1}
		Let $f$ be a post-critically algebraic endomorphism of $\PP^n, n \ge 2$ of degree $d\geq 2$ and $\lambda$ be an eigenvalue of $f$ along a periodic cycle. Then either $\lambda=0$ or $|\lambda|>1$.
\end{conjecture}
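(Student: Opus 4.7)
The plan is to confirm Conjecture \ref{conj 1} in the special case of Koch's endomorphisms at fixed points, as announced in the abstract. The strategy has three parts: (i) make Koch's construction explicit enough to enumerate the fixed points of her endomorphism $f_c \colon \PP^n \to \PP^n$ attached to a post-critically finite unicritical polynomial $p_c(z) = z^d + c$; (ii) compute the differential at each fixed point and express its spectrum through multipliers of periodic cycles of $p_c$; (iii) conclude with Fatou's theorem on post-critically finite polynomials.

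First, I would unwind Koch's construction and present $f_c$ in explicit affine coordinates. Since Koch's map is built out of applying $p_c$ to marked points, after a suitable normalization $f_c$ should have a shift-like or semi-triangular form in which the $i$-th coordinate of $f_c$ depends, via $p_c$, only on earlier coordinates. This forces the fixed-point equations in the affine chart to decouple: they reduce to a single polynomial equation $p_c^{\circ r}(\alpha) = \alpha$ for some $\alpha \in \C$, with the remaining coordinates prescribed as points on the $p_c$-orbit of $\alpha$. Fixed points at infinity would be enumerated in a chart adapted to the hyperplane at infinity. Directly from this explicit form, the shift structure forces the characteristic polynomial of $D_{\x} f_c$ at a fixed point $\x$ to take the shape $\lambda^n - \prod_i p_c'(\alpha_i) = 0$, where the $\alpha_i$ are the coordinates of $\x$ and together comprise a $p_c$-cycle. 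The product $\prod_i p_c'(\alpha_i)$ is by definition the multiplier $\mu$ of this cycle, so the eigenvalues of $f_c$ at $\x$ are precisely the $n$-th roots of $\mu$.

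Next, I would invoke Fatou's theorem for post-critically finite polynomials: every periodic cycle of $p_c$ is either repelling, with $|\mu|>1$, or superattracting, with $\mu = 0$ (which occurs exactly when the cycle contains the critical point $0$). In both cases the dichotomy $|\lambda|>1$ or $\lambda = 0$ for the eigenvalues of $f_c$ at $\x$ follows at once. Fixed points at infinity require a parallel analysis: they lie on invariant components of the post-critical hypersurface $PC(f_c)$, and I would expect their nonzero eigenvalues to be controlled by the multiplier at infinity of $p_c$, which has modulus $d>1$, while the eigenvalues along directions tangent to critical hyperplanes contribute the value $0$.

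The principal obstacle is the analysis at fixed points lying on the post-critical set, including those at infinity and at singular intersections of post-critical hyperplanes. At such points the differential mixes tangential and transverse directions, and isolating the nonzero part of the spectrum requires an invariant decomposition of the tangent space together with a precise description of how $f_c$ extends to a neighborhood of these loci. A secondary bookkeeping difficulty is matching Koch's combinatorial data with the cycle structure of $p_c$, so as to enumerate the fixed points of $f_c$ exactly once and correctly identify which cycles of $p_c$ lift to fixed points of $f_c$ as opposed to higher-period cycles.
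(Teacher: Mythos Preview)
Your overall strategy---reduce to multipliers of the underlying post-critically finite polynomial $P(t)=t^d+c$ and then invoke the repelling/superattracting dichotomy---is the right philosophy, and it is also the paper's. But step~(ii) contains a genuine error that breaks the argument.

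The claim that the characteristic polynomial of $D_{\x}f_c$ is $\lambda^n-\prod_i P'(\alpha_i)$ is false. Koch's map is not a pure cyclic shift $(x_1,\dots,x_n)\mapsto(P(x_n),P(x_1),\dots,P(x_{n-1}))$; in every coordinate there is an additional correction term (the $-x_{m-1}^d$ in the periodic case, the $-(\beta x_{k+m-1}-x_{k-1})/(\beta-1)$ in the preperiodic case). Concretely, for $G_{0,3}$ the Jacobian at a fixed point $(z_1,z_2)$ is
\[
\begin{pmatrix}0&-\delta_2\\ \delta_1&-\delta_2\end{pmatrix},\qquad \delta_i=dz_i^{d-1},
\]
with characteristic polynomial $\lambda^2+\delta_2\lambda+\delta_1\delta_2$, not $\lambda^2-\delta_1\delta_2$. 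So even for fixed points \emph{outside} the post-critical set the eigenvalues are not $n$-th roots of the multiplier, and one cannot conclude $|\lambda|>1$ by your route. The paper handles this case (following Koch) by showing that the \emph{inverse} of the transpose $L^*$ is conjugate to the Thurston pushforward $P_*$ on a finite-dimensional space of meromorphic quadratic differentials, which is a strict contraction for the $L^1$-norm; this is what forces the spectrum outside the closed unit disk.

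For fixed points \emph{inside} the post-critical set, which you correctly flag as the main difficulty, the paper's mechanism is rather different from what you sketch. A fixed point $\z$ of $F_{k,m}$ has an ``exact type'' $(k',m')\preceq(k,m)$, and $\M_{k',m'}\subset\M_{k,m}$ is an invariant subspace on which $F_{k,m}$ restricts to $F_{k',m'}$ with $\z\notin PC(F_{k',m'})$. This gives a splitting
\[
\Spec D_{\z}F_{k,m}=\Spec D_{\z}F_{k',m'}\ \cup\ \Spec\bigl(L^*|_{\M_{k',m'}^0}\bigr),
\]
where $\M_{k',m'}^0$ is the annihilator. The first piece is handled by the Thurston argument above. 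For the second piece the paper finds an explicit cyclic generating set $\{\beta_j\}_{j\in\Z/m\Z}$ of the annihilator on which $L^*$ acts by $L^*\beta_j=\sigma_{j-1}\beta_{j-1}$, and a short computation then gives the precise answer: either $L^*$ is nilpotent (when $k'=0$) or its eigenvalues are exactly the $\mu$ with $\mu^m=\lambda^{m/m'}$ and $\mu^{m'}\neq\lambda$. Only this \emph{transverse} part of the spectrum is described by roots of the multiplier; the tangential part requires the quadratic-differential contraction and admits no such formula.
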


The conjecture has been verified by the author in the case $n = 2$ and in the case in any dimension with the periodic cycles outside the post-critical set. In this article, we shall verify the conjecture for the family of post-critically algebraic endomorphisms associated to unicritical polynomials constructed by Sarah Koch in \cite{koch2013teichmuller}, or \textit{Koch maps} for short.

We shall now describe the family of Koch maps we want to study and we refer to \cite{koch2013teichmuller} for the original construction. Throughout this article, we fix
\[
d \in \mathbb{N} , d \ge 2 \text{ and } \beta^d = 1, \beta \neq 1.
\]
The maps $\{G_{k,m} \colon \C^{k+m-1} \to \C^{k+m-1}\mid (k,m) \in \N \times \N^*\}$ constructed by Sarah Koch are of the following forms (see \cite[Proposition 6.1 - 6.2]{koch2013teichmuller}),
\begin{itemize}
	\item if $k=0$,
	\[
	G_{0,m}\colon \left(
	\begin{array}{c}
	x_1\\
	x_2\\
	\vdots\\
	x_{m-1}
	\end{array}
	\right) \mapsto \left(
	\begin{array}{c}
	- x_{m-1}^d\\
	x_1^d-x_{m-1}^d\\
	\vdots\\
	x_{m-2}^d - x_{m-1}^d
	\end{array}
	\right),
	\]
	\item if $k \neq 0$, 
	\[
	G_{k,m}\colon \left(
	\begin{array}{c}
	x_1\\
	x_2\\
	\vdots\\
	x_{k+m-1}
	\end{array}
	\right) \mapsto \left(
	\begin{array}{c}
	\left( - \frac{\b x_{k+m-1} - x_{k-1}}{\b - 1} \right)^d\\
	\left( x_1 - \frac{\b x_{k+m-1} - x_{k-1}}{\b -1 } \right)^d \\
	\vdots\\
	\left( x_{k+m-2} - \frac{\b x_{k+m-1}  - x_{k-1}}{\b-1} \right)^d
	\end{array}
	\right).
	\]
\end{itemize}
The map $G_{k,m}$ induces a holomorphic endomorphism of $\PP^{k+m-2}$ which is closely related to maps on moduli spaces used in Thurston's topological characterization of rational maps. We refer to \cite{koch2013teichmuller},\cite{koch2008new},\cite{douady1993proof} for further discussion. In \cite{koch2013teichmuller}, Koch showed that $G_{k,m}$ is post-critically algebraic. It is natural to ask whether Conjecture \ref{conj 1} is true for $G_{k,m}$. The eigenvalues of Koch maps along a periodic cycle outside the post-critical set are well understood. It is a consequence of its construction that those values has modulus strictly bigger than $1$.
\begin{thei}[Corollary 7.2 \cite{koch2013teichmuller}\label{thm:origin}
	]
Let $\mu$ be an eigenvalue of $G_{k,m}$ along a periodic cycle outside the post-critical set. Then $|\mu|>1$.
\end{thei}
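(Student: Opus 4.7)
The overall strategy is to equip the complement $U := \PP^{k+m-2} \setminus PC(G_{k,m})$ of the post-critical set with a hyperbolic metric --- specifically, the Kobayashi infinitesimal metric --- and then show that $G := G_{k,m}$ is \emph{expanding} with respect to this metric. This is the direct higher-dimensional analogue of the classical one-dimensional picture: for a post-critically finite rational map, the complement of the post-critical set is a hyperbolic Riemann surface, the restriction of the map to the preimage of that complement is an unramified covering, and so periodic cycles lying in the complement are automatically repelling by the Schwarz lemma.

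The first step is to verify that $U$ is Kobayashi hyperbolic --- in fact, hyperbolically embedded in $\PP^{k+m-2}$. Since $G$ is post-critically algebraic, $PC(G)$ is a codimension-one algebraic subvariety, and in Koch's setting its irreducible components can be read off directly from the explicit coordinate formulas (coordinate hyperplanes, together with their forward images under iteration). One then invokes a Kiernan--Kobayashi-type criterion guaranteeing that the complement of sufficiently many hypersurfaces in suitably general position in projective space is hyperbolically embedded. This is the place where the concrete structure of Koch's construction plays an essential role.

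The second step is the expansion argument. Set $V := G^{-1}(U)$. Forward invariance of $PC(G)$ gives $V \subseteq U$, and by construction $V$ contains no critical point of $G$, so $G\colon V \to U$ is an unramified finite holomorphic covering. This produces two complementary properties of the Kobayashi metric: first, $G$ pulls back $k_U$ to $k_V$ as a local isometry, so $k_V(z,v) = k_U\bigl(G(z), D_z G \cdot v\bigr)$; second, the inclusion $V \hookrightarrow U$ is distance non-increasing, giving $k_U(z,v) \leq k_V(z,v)$. Combining these,
\[
k_U(z,v) \;\leq\; k_U\bigl(G(z),\, D_z G \cdot v\bigr).
\]
Now let $z \in U$ be periodic of period $p$, and let $v \in T_z \PP^{k+m-2}$ be an eigenvector of $D_z G^{p}$ with eigenvalue $\mu$. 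Iterating the above inequality $p$ times along the cycle yields $k_U(z,v) \leq |\mu|\, k_U(z,v)$, and hyperbolicity gives $k_U(z,v) > 0$, whence $|\mu| \geq 1$. To promote this to the strict inequality $|\mu| > 1$, I would invoke the rigidity of equality in the Schwarz--Pick lemma: since the covering $G\colon V \to U$ has degree $d^{k+m-2} > 1$, we have $V \subsetneq U$, so the inclusion cannot be a global isometry, and an Ahlfors-type argument rules out isometric equality along an entire finite orbit.

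The principal obstacle is \emph{Step 1}: establishing Kobayashi hyperbolicity (or hyperbolic embedding) of $U$. Hyperbolicity of complements in projective space is classically delicate, and the proof will hinge on exploiting the explicit combinatorics of Koch's post-critical divisors, possibly via an inductive analysis that reduces to smaller values of $k+m$. Once hyperbolicity is in hand, Step 2 is the standard Schwarz--Pick expansion argument adapted to higher dimensions; the only residual subtlety is certifying strictness of the Kobayashi inequality, which follows from the rigidity of isometric coverings versus proper inclusions of hyperbolic manifolds.
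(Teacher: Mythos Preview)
Your strategy is genuinely different from the paper's. The paper (following Koch) never touches the Kobayashi metric on $\PP^{k+m-2}\setminus PC(G_{k,m})$; instead it shows that $D_{\z}F_{k,m}$ is \emph{invertible} and conjugates its inverse $L_*$ to the Thurston pushforward $P_*$ acting on the finite-dimensional space of meromorphic quadratic differentials on $\C$ with simple poles along the post-critical set of the one-variable polynomial $P(t)=t^d+z_1$. The contraction $\|P_*Q\|_U<\|Q\|_U$ in the $L^1$ norm is then an immediate consequence of the triangle inequality over a large disk $U\supset\!\supset P^{-1}(U)$, so every eigenvalue of $L_*$ lies in $\D$ and every eigenvalue of $D_{\z}F_{k,m}$ lies outside $\overline\D$. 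This reduces the whole question to one complex variable, and strictness comes for free.

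Your route, by contrast, stays in the higher-dimensional target, and both obstacles you flag are real. For Step~1, the post-critical locus here sits inside the diagonal arrangement $\{x_i=x_j\}$, and Kobayashi hyperbolicity (let alone hyperbolic embedding) of such complements is not formal --- the hyperplanes are far from general position, and you would need a specific argument tailored to the braid-type combinatorics. For the strictness in Step~2, the proper containment $V\subsetneq U$ does \emph{not} by itself force the inclusion to be strictly Kobayashi-decreasing at a prescribed tangent vector; you would need hyperbolic embedding of $U$ in $\PP^{k+m-2}$ plus a localisation showing the cycle meets the region where the covering genuinely folds. The quadratic-differential argument sidesteps both issues entirely, which is why it is the natural proof in this Thurston-theoretic setting.
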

We refer also to \cite{buff2017eigenvalues} for a further discussion about the arithmetics of such values. In \cite{koch2013teichmuller}, Koch asked whether we have the same conclusion for eigenvalues along a cycle inside the post-critical set. In this article, we answer this question in the positive (and hence verify Conjecture \ref{conj 1}) for the case when the cycle is a fixed point.
\begin{them}\label{thm:theoremA}
	Let $\mu$ be an eigenvalue of a map $G_{k,m}$ at a fixed point. Then, either $\mu = 0 $ or $|\mu|>1$.
\end{them}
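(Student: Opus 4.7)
The plan is to solve the fixed-point equation for $G_{k,m}$ explicitly and then analyse the characteristic polynomial of its differential at each fixed point.

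First I would parameterise the fixed points in terms of unicritical polynomials. For $k = 0$, writing $c := x_1$ and iterating the defining equations yields $x_i = f_c^i(0)$ with $f_c(z) := z^d + c$, while the last equation $x_1 = -x_{m-1}^d$ becomes $f_c^m(0) = 0$. The fixed points of $G_{0,m}$ therefore correspond bijectively to parameters $c \in \C$ for which $0$ is periodic of period dividing $m$ under $f_c$. For $k \ge 1$, introducing $v := (\beta x_{k+m-1} - x_{k-1})/(\beta - 1)$, setting $c := -v$, and writing $c_i := f_c^i(0)$, a similar computation gives $x_i = c_{i+1} - c$ for every $i$, together with the constraint $c_{k+m} = \beta^{-1} c_k$; applying $f_c$ to this identity yields $c_{k+m+1} = c_{k+1}$, so the critical orbit is eventually periodic and $f_c$ is PCF.

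Next I would compute the differential $DG_{k,m}$ at such a fixed point. Writing $a_i := f_c'(c_i) = d c_i^{d-1}$, the Jacobian has the form of a weighted shift $e_i \mapsto a_i e_{i+1}$ plus a rank-one correction in the last column (together with, for $k \ge 1$, an additional correction in the $(k-1)$-th column coming from the $\beta$-twist). After a diagonal change of basis, $DG_{k,m}$ becomes a companion matrix whose characteristic polynomial, in the case $k = 0$, is
\[
P(\mu) \;=\; \mu^{m-1} + a_{m-1}\mu^{m-2} + a_{m-1}a_{m-2}\mu^{m-3} + \cdots + a_1 a_2 \cdots a_{m-1}.
\]
If the exact period $m'$ of $0$ under $f_c$ is a proper divisor of $m$, the vanishings $a_i = 0$ for $m' \mid i$ force $P(\mu) = \mu^{m - m'} P'(\mu)$, where $P'$ is the analogous polynomial for exact period $m'$; this reduces the analysis of the nonzero eigenvalues to the case in which $0$ has exact period $m$ and all $a_i$ are nonzero.

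The main obstacle is the final step: in the exact-period case, to prove that every root of $P$ has modulus strictly greater than $1$. A first observation---verified by direct computation in small cases---is that the fixed-point identities force $\mu = d$ to be a root of $P$, already producing one eigenvalue of modulus $d \ge 2$. After dividing out $(\mu - d)$, the remaining factor of degree $m - 2$ should, I expect, admit a further factorisation whose roots take the form $d \prod_{i \in S} c_i^{d-1}$ for suitable subsets $S \subseteq \{1,\ldots,m-1\}$; the desired modulus bound would then follow either from a direct inductive argument using the fixed-point relations or, more conceptually, from identifying each nonzero eigenvalue of $DG_{k,m}$ with the multiplier of a repelling periodic cycle of $f_c$ (or of some iterate thereof)---repelling because $f_c$ is PCF and, by Fatou's theorem, the only non-repelling cycles of a PCF polynomial are the super-attracting ones containing the critical point. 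The case $k \ge 1$ is analogous but requires careful bookkeeping of the $\beta$-twist in both the computation of $P$ and the factorisation of its roots.
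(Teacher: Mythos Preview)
Your parametrisation of fixed points by PCF unicritical polynomials is correct and matches the paper's Proposition~\ref{prop: critical orbit}. The observation that $\mu=d$ is always an eigenvalue is also correct and follows immediately from Euler's identity for homogeneous maps: differentiating $G_{k,m}(\lambda x)=\lambda^d G_{k,m}(x)$ at $\lambda=1$ gives $D_{z}G_{k,m}\cdot z = d\,z$ at any fixed point $z\neq 0$. For $k=0$, your factorisation $P(\mu)=\mu^{m-m'}P'(\mu)$ when the exact period $m'$ properly divides $m$ is essentially the paper's Proposition~\ref{prop: eigenvalue inside} in the case $k'=0$.

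The genuine gap is precisely what you call the ``main obstacle''. In the exact-period case your two proposed mechanisms both fail. The conjectured factorisation with roots of the form $d\prod_{i\in S} c_i^{d-1}$ has no basis; there is no such closed form. More importantly, the hope of identifying every nonzero eigenvalue with the multiplier of a repelling cycle of $f_c$ is not what happens: in the exact-type case (fixed point outside $PC(G_{k,m})$) the eigenvalues are \emph{not} multipliers of cycles of $f_c$ in any direct sense, and the paper does not prove $|\mu|>1$ by any such identification. Instead it invokes Koch's original argument (Proposition~\ref{prop: fixed point outside}): one shows that $(D_z F_{k,m})^{-1}$ is conjugate to the pushforward operator $P_*$ acting on a finite-dimensional space of meromorphic quadratic differentials with simple poles along the postcritical set, and $P_*$ is a strict contraction for the $L^1$-norm on a large disk. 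This Thurston-type contraction is the essential input you are missing, and there is no known way to bypass it by elementary manipulation of the characteristic polynomial.

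For $k\geq 1$ your sketch is too vague to assess, but the paper's treatment is genuinely different from what you outline. Rather than computing the characteristic polynomial directly, it uses a block decomposition: the invariant subspace $\M_{k',m'}\subset\M_{k,m}$ carries the derivative of the smaller Koch map $F_{k',m'}$ at a fixed point \emph{outside} its postcritical set (handled by the Thurston contraction above), while the transpose acting on the annihilator $\M_{k',m'}^0$ is analysed by exhibiting an explicit cyclic generating set on which it acts as a weighted shift. In the case $k'\neq 0$ this yields eigenvalues satisfying $\mu^m=\lambda^{m/m'}$ with $\mu^{m'}\neq\lambda$, where $\lambda$ is the multiplier of the \emph{single} periodic cycle in the critical orbit of $f_c$; since $|\lambda|>1$ for a PCF polynomial, $|\mu|>1$ follows. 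So multipliers do enter, but only for the ``excess'' eigenvalues on the quotient, and only one multiplier is involved---not one per eigenvalue as you conjecture.
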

In fact, we can have even better understanding about the values of such eigenvalues. Thanks to Theorem \ref{thm:origin}, we only need to study the eigenvalues at a fixed point inside the post-critical set of $G_{k,m}$. However, the original construction does not provide much information about fixed points inside post-critical set. In order to explain our result, let us take a closer look at $G_{k,m}$.

Let $(k,m) \in \N \times \N^*$. To a point $z \in \C^{k+m-1}$, we can associate a polynomial $P_z$ of the following form
\[
P_z(t) = \begin{cases*}
t^d - z_{m-1}^d \text{ if $k = 0$}\\
\left( t - \frac{\b z_{k+m-1} - z_{k-1}}{\b -1 } \right)^d \text{ if $k \neq 0$}.
\end{cases*}
\] 
If $z$ is a fixed point of $G_{k,m}$, then for all $1 \le i \le k+m-1$, $z_i = P_z^{\circ i}(0)$. Moreover, $ P_z$ is post-critically finite. Indeed, 
\begin{itemize}
\item if $k = 0$ then $P_z(z_{m-1})  = 0$,
\item if $k \neq 0$ then 
\[\begin{array}{ccl}
P_{z}(z_{k+m-1}) & =& \left( z_{k+m-1} - \frac{\b z_{k+m-1}  - z_{k-1}}{\b-1} \right)^d = \left( \frac{- z_{k+m-1}  + z_{k-1}}{\b-1} \right)^d\\
& = & \left( \frac{\b z_{k-1}  -\b z_{k+m-1}}{\b-1} \right)^d = P_{z} (z_{k-1})
\end{array}\]
\end{itemize} 
We shall call $P_z$ the \textit{polynomial associated to $z$} since $P_z$ plays an important role in the study of the eigenvalues of $G_{k,m}$ at a fixed point $z$. More precisely, our main result, which completes the description of eigenvalues of Koch maps at fixed points, is the following. 
\begin{themprime}\label{thm:theoremA'}
	Let $\mu$ be an eigenvalue of a map $G_{k,m}$ at a fixed point $z  = (z_1,\ldots,z_{k+m-1})$. Let $P_z$ be the polynomial associated to $z$ and $z_1$ is preperiodic of preperiod $k'$ and of period $m'$ to a cycle of multiplier $\lambda$ under $P_z$. 

Only one of the following cases happens: 
	\begin{enumerate}
		\item\label{itm:1} $\mu = 0$.
		\item\label{itm:2} $\mu$ is an eigenvalue of a map $G_{k',m'} $ at a fixed point outside the post-critical set $PC(G_{k',m'})$.
		\item\label{itm:3} We have
		\[
		\mu^m = \lambda^{\frac{m}{m'}}, \mu^{m'} \neq \lambda.
		\]
	\end{enumerate}
\end{themprime}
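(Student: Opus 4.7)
The plan is to compute the characteristic polynomial $\chi_A(\mu):=\det(\mu I-A)$ of $A:=D_zG_{k,m}$ in closed form and then to factor it into three pieces matching the three cases of the statement. The computation is driven by the orbit $0=z_0,z_1,\ldots,z_{k+m-1}$ of the critical value of $P_z$ and the derivatives $a_i:=P_z'(z_i)$ along it.

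I would first write the differential in terms of the nilpotent shift $S$ whose only non-zero entries are the $a_{i-1}$ on the subdiagonal: for $k\geq 1$ one has $A = S - b\,(\partial c)^T$, where $b$ is the column vector of those same $a_{i-1}$'s and $\partial c$ is the gradient of the recentring $c=(\b z_{k+m-1}-z_{k-1})/(\b-1)$, supported only on coordinates $k-1$ and $k+m-1$; for $k=0$ the perturbation is even simpler. The matrix-determinant lemma, combined with the lower-triangularity of $S-\mu I$, expresses $\chi_A(\mu)$ through the auxiliary polynomials
$$Q_i(\mu):=\sum_{s=0}^{i}\mu^{s}(P_z^{\circ(i-s)})'(z_s),\qquad Q_0=1,\quad Q_i=\mu^i+a_{i-1}Q_{i-1}.$$
Iterating this recursion $m$ times inside $Q_{k+m-1}$ rewrites the characteristic polynomial, up to the non-zero factor $\b-1$, as
$$(\b-1)\,\chi_A(\mu)\;=\;\b\,\mu^{k}U(\mu)\;-\;\bigl(\mu^{m}-\b\,a_{k-1}T_{0}\bigr)\,Q_{k-1}(\mu),$$
with $T_i=\prod_{l=k+i}^{k+m-2}a_{l}$ and $U(\mu)=\sum_{i=0}^{m-1}T_{i}\mu^{i}$.

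The key algebraic input is the identification $\b\,a_{k-1}T_0=\lambda^{m/m'}$. The relation $P_z(z_{k+m-1})=P_z(z_{k-1})$ recalled just before the statement is equivalent to $z_{k-1}-c=\b\,(z_{k+m-1}-c)$, which raised to $d-1$ and combined with $\b^d=1$ gives $a_{k-1}=a_{k+m-1}/\b$; together with the fact that along the periodic part of the orbit the product of $m$ consecutive multipliers equals $\lambda^{m/m'}$, this produces the claimed identification. Re-applying the same formula to the \emph{truncated} orbit $(z_1,\ldots,z_{k''+m'-1})$, where $k''$ denotes the preperiod of $0$ under $P_z$, one identifies the cofactor as the characteristic polynomial of the differential of the smaller Koch map $G_{k'',m'}$ at the fixed point $z''=(z_1,\ldots,z_{k''+m'-1})$, yielding the factorisation
$$\chi_A(\mu)\;=\;\mu^{k-k''}\,\Phi_{\lambda}(\mu)\,\chi_{D_{z''}G_{k'',m'}}(\mu),$$
where $\Phi_{\lambda}(\mu)=(\mu^{m}-\lambda^{m/m'})/(\mu^{m'}-\lambda)$ has degree $m-m'$.

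The three factors then correspond exactly to cases (1), (3) and (2), provided one verifies that $z''$ lies outside $PC(G_{k'',m'})$; this is a separate but short check, because by construction the orbit of $0$ under $P_{z''}=P_z$ has preperiod exactly $k''$ and period exactly $m'$, and this non-degeneracy is precisely the complement of $PC(G_{k'',m'})$. Theorem \ref{thm:origin} then supplies $|\mu|>1$ for every root of the third factor, while for a root of $\Phi_{\lambda}$ one has $|\mu|^{m}=|\lambda|^{m/m'}$, and post-critical finiteness of $P_z$ (which rules out parabolic, Siegel and Herman cycles and forces any attracting cycle to be super-attracting) gives $\lambda=0$ or $|\lambda|>1$; in the first case $\Phi_{\lambda}$ has only $\mu=0$ as a root and case (3) is vacuous, while in the second $|\mu|>1$. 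Theorem A follows immediately. The main obstacle is the second paragraph: the identity $\b\,a_{k-1}T_0=\lambda^{m/m'}$ together with the re-identification of the cofactor as the characteristic polynomial of a \emph{smaller} Koch map requires careful combinatorial bookkeeping, especially when $k'' < k$ or $m' < m$, and the verification of $z''\notin PC(G_{k'',m'})$ relies on the explicit description of the Koch post-critical set along the orbit.
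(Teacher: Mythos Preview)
Your approach is genuinely different from the paper's. The paper never computes a characteristic polynomial; instead it passes to a conjugate family $F_{k,m}\colon\mathcal{M}_{k,m}\to\mathcal{M}_{k,m}$ living inside the sequence space $\C^{\N^*}$, proves that the exact type $(k',m')$ of the fixed sequence satisfies a partial order $(k',m')\preceq(k,m)$ forcing $k'\in\{0,k\}$, and then uses that $\mathcal{M}_{k',m'}\subset\mathcal{M}_{k,m}$ is an invariant subspace on which $F_{k,m}$ restricts to $F_{k',m'}$. This yields the block-triangular splitting $\Spec D_{\z}F_{k,m}=\Spec D_{\z}F_{k',m'}\cup\Spec\bigl((D_{\z}F_{k,m})^*\!\restriction_{\mathcal{M}_{k',m'}^0}\bigr)$, and the annihilator piece is analysed by exhibiting an explicit cyclic generating set of linear forms on which the transpose acts as a weighted shift. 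Your rank-one perturbation plus matrix-determinant-lemma route is a legitimate alternative and would, if carried through, give the stronger information of an explicit factorisation of $\chi_A$ rather than just a set-theoretic description of the spectrum.

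There is, however, a concrete gap in the bookkeeping. You index the smaller Koch map by $k''$, the preperiod of $0$, whereas the theorem (and the correct factorisation) uses $k'$, the preperiod of $z_1$. These differ: when $k'\neq 0$ one has $k'=k$ by the partial order and then $k''=k'+1=k+1$, so your factor $\mu^{k-k''}=\mu^{-1}$ is not a polynomial, and the degree count only closes because $\chi_{D_{z''}G_{k+1,m'}}$ has one degree too many. Worse, the truncated point $z''=(z_1,\dots,z_{k+m'})$ is in general \emph{not} a fixed point of $G_{k+1,m'}$ for the same $\beta$: the collision $P_z(z_k)=P_z(z_{k+m'})$ only gives $z_k-c=\zeta(z_{k+m'}-c)$ for \emph{some} $d$-th root of unity $\zeta$, with no reason for $\zeta=\beta$. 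The correct smaller map is $G_{k',m'}=G_{k,m'}$, whose fixed point $(z_1,\dots,z_{k+m'-1})$ does satisfy the $\beta$-relation because $z_{k+m'-1}=z_{k+m-1}$ by periodicity. This also explains why the paper treats the cases $k'=0$ and $k'\neq 0$ separately: in the first case the cycle is superattracting, $\lambda=0$, the factor $\Phi_\lambda$ degenerates to $\mu^{m-m'}$, and the remaining zero eigenvalues (multiplicity $k$) come from the nilpotent action on the preperiodic tail; in the second case there is no $\mu$-power factor at all. Your uniform formula obscures this dichotomy and breaks in the second case.
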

We can see that Theorem \ref{thm:theoremA} is a direct consequence of Theorem \ref{thm:theoremA'}. Indeed, if Case \ref{itm:1} or Case \ref{itm:2} happens, Theorem \ref{thm:theoremA} follows from Theorem \ref{thm:origin}. If Case \ref{itm:3} happens, since the polynomial $P_z$ is a {post-critically finite} polynomial, Theorem \ref{thm:theoremA} follows from the equation $\mu^m = \lambda^{\frac{m}{m'}}$ and the fact that a non-vanishing multiplier of a post-critically finite polynomial has modulus strictly bigger than $1$ (see \cite[Corollary 14.5]{milnor2011dynamics})

Let us explain briefly our approach. Instead of using the original construction of $G_{k,m}$, we introduce 

 \begin{itemize}
 	\item[-]a partial order $\preceq$ on $\N \times \N^*$,
 	\item[-] a dynamically equivalent family of maps, that we denote by \[\{F_{k,m}\colon \M_{k,m} \to \M_{k,m} , (k,m) \in \N \times \N^*\},\] where $\M_{k,m}$ is a subspace of the vector spaces of complex sequences $\C^{\N^*}$.
 \end{itemize} 
More precisely, with the convention $x_0 \coloneqq 0$, the space $\M_{k,m}$ is defined as
\[
\Mkm = \{ \x  = (x_i)_{i \ge 1} \mid \forall \, i \ge k+1, x_i = x_{i+m} \text{ and } \beta x_{k+m} - x_k = 0 \}
\]
and the map $F_{k,m} \colon \Mkm \to \Mkm $ is defined as 
\[
\Fkm(\x) = \y \Leftrightarrow \left\{
\begin{array}{l}
\b y_{k+m} - y_k = 0\\
y_i = x_{i-1}^d + y_1  \quad	\text{ for all $ i \ge 2$}.
\end{array}
\right.
\]
The construction shall be presented in details in Section \ref{sec: construction of Fkm}. For each $(k,m) \in \N \times \N^*$, the maps $\Fkm$ and $G_{k,m}$ are conjugate. Thus, to prove Theorem \ref{thm:theoremA'}, we need to study an eigenvalue $\mu$ of $\Fkm$ at a fixed point $\z \in \Mkm$. The full statement of what we can prove is Theorem \ref{thm: Koch prime}. Briefly, to each fixed point $\z$ in $\Mkm$, we associate a pair of integers $(k',m')$ and a post-critically finite polynomial $P_z$ whose critical value is preperiodic of preperiod $k'$ to a cycle of period $m'$. The associated polynomial $P_z$ for a $\z = (z_i)_{i \ge 1} \in \Mkm$ is simply $P_z(t) = t^d + z_1$. The partial order characterizes the following property of the family $\{\Fkm \}$:
\[
(k',m') \preceq (k,m) \Leftrightarrow \begin{cases*}
\M_{k',m'} \subseteq \Mkm,\\ \Fkm|_{\M_{k',m'}}  = F_{k',m'}.
\end{cases*}
\]
Moreover, we shall show that $\z \notin PC(F_{k',m'})$. Thus, if $(k',m') = (k,m)$, our fixed point $\z$ is outside the post-critical set $PC(F_{k,m})$ and we are in Case \ref{itm:2}.

 If $(k',m') \neq (k,m)$, then $\M_{k',m'}$ is a proper subset of $\Mkm$ which is invariant under $\Fkm$ and the restriction of $\Fkm$ to $\M_{k',m'}$ is exactly $F_{k',m'}$. If $\mu$ has associated eigenvectors tangent to $\M_{k',m'}$, since $\z \notin PC(F_{k',m'})$, we are again in the Case \ref{itm:2}. Otherwise, $\mu$ is the eigenvalue of the transpose $D_{\z} \Fkm^*$ of the derivative $D_{\z} \Fkm$ acting on the annihilator $\M_{k',m'}^0 = \{\omega \in \Mkm^* \mid \omega|_{\M_{k',m'}} = 0\}$. In such a case, either $\mu = 0$ or we will show that $\M_{k',m'}^0$ has a set of generators on which $D_{\z} \Fkm^*$ acts cyclically and we obtain Case \ref{itm:3} by solving a linear algebra problem.
\medskip

	\textbf{Acknowledgement:} This article is the improvement of Chapter $2$ of the author's thesis \cite{le2020dynamique}. The author is grateful to his supervisors Xavier Buff and Jasmin Raissy for their support,
	suggestions and encouragement. The author would like to thank also Valentin Huguin for his comments and useful discussions. This work is supported by the fellowship of Centre International de Math\'ematiques et d'Informatique de Toulouse (CIMI).
	\\
\section{An alternative construction of Koch maps}\label{sec: construction of Fkm}
\subsection{Construction of $F_{k,m}$ and $\M_{k,m}$}
Recall that in this article, we fix 
\[
d \in \mathbb{N} , d \ge 2 \text{ and } \beta^d = 1, \beta \neq 1
\]
Let $(k,m) \in \N \times \N^*$ and denote by $
	\mathcal{E} = \C^{\N^*}$ the vector space of complex sequences $\x = (x_i)_{i \ge 1}$. Set $\boldsymbol{0} \coloneqq (0,0,\ldots)$. Let $\mathcal{L} \subset \mathcal{E}$ be the one-dimensional subspace consisting of constant sequences,
	\[
	\mathcal{L} = \{ \x \in \mathcal{E}  \mid \forall i,j \ge 1, \quad  x_i = x_j \},
	\] and $\mathcal{H}_{k,m} \subset \mathcal{E}$ be the hyperspace of $\mathcal{E}$ defined by \[
	\mathcal{H}_{k,m} \coloneqq \{ \x \in \mathcal{E} \mid \b x_{k+m} - x_k= 0 \}.
	\]
	with the convention $x_0\coloneqq0$. In particular, when $k = 0$, \[\mathcal{H}_{0,m} = \{ \x \in \mathcal{E} \mid x_{m} = 0  \}.\]
	\begin{lemma}\label{eq:decomposition of E}
Given $(k,m) \in \N \times \N^*$, we have $\mathcal{E} = \mathcal{H}_{k,m} \oplus \mathcal{L}$.
	\end{lemma}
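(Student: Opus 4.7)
The plan is to verify the two defining conditions of a direct sum decomposition: the trivial intersection $\mathcal{H}_{k,m} \cap \mathcal{L} = \{\0\}$, and the sum $\mathcal{H}_{k,m} + \mathcal{L} = \mathcal{E}$. Since $\mathcal{L}$ is one-dimensional, the first condition together with the fact that $\mathcal{H}_{k,m}$ is the kernel of a nonzero linear functional on $\mathcal{E}$ would already morally give the result, but because $\mathcal{E}$ is infinite-dimensional I will simply exhibit the decomposition directly by producing the unique constant $c \in \C$ which separates the two summands.

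For the trivial intersection, I take a constant sequence $\x = (c, c, c, \ldots) \in \mathcal{L}$ and impose the equation $\b x_{k+m} - x_k = 0$ defining $\mathcal{H}_{k,m}$. If $k = 0$, the convention $x_0 = 0$ turns this into $\b c = 0$, which forces $c = 0$ since $\b \neq 0$ (because $\b^d = 1$). If $k \neq 0$, the equation becomes $(\b - 1)c = 0$, which forces $c = 0$ since $\b \neq 1$ by hypothesis. In both cases the intersection is $\{\0\}$.

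For the sum, given an arbitrary $\x \in \mathcal{E}$, I will solve for the unique scalar $c \in \C$ such that $\x - (c, c, c, \ldots)$ lies in $\mathcal{H}_{k,m}$. Writing out the defining equation of $\mathcal{H}_{k,m}$ applied to $\x - (c, c, \ldots)$ gives
\[
\b(x_{k+m} - c) - (x_k - c) = 0 \quad \text{if } k \neq 0, \qquad \b(x_m - c) = 0 \quad \text{if } k = 0,
\]
with the convention $x_0 = 0$ in the first case. Each equation is linear in $c$ with nonzero leading coefficient (using $\b \neq 0$ when $k = 0$, and $\b - 1 \neq 0$ when $k \neq 0$), and therefore admits a unique solution, explicitly
\[
c = \frac{\b x_{k+m} - x_k}{\b - 1} \quad (k \neq 0), \qquad c = x_m \quad (k = 0).
\]
This produces the required decomposition $\x = (\x - (c, c, \ldots)) + (c, c, \ldots) \in \mathcal{H}_{k,m} + \mathcal{L}$.

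There is no serious obstacle here; the whole argument rests on the two non-degeneracy conditions $\b \neq 0$ and $\b \neq 1$, both of which are built into the standing hypotheses on $\b$. The only minor point worth flagging in the write-up is the systematic case split between $k = 0$ and $k \neq 0$, which is needed because the convention $x_0 = 0$ changes the form of the defining equation of $\mathcal{H}_{k,m}$.
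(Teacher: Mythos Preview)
Your proof is correct and follows essentially the same approach as the paper's own proof: both verify the trivial intersection and exhibit the explicit constant $c$ (namely $x_m$ when $k=0$ and $\frac{\b x_{k+m}-x_k}{\b-1}$ when $k\ge 1$) that projects an arbitrary $\x$ onto $\mathcal{H}_{k,m}$ parallel to $\mathcal{L}$. The only cosmetic differences are that the paper treats the sum first and the intersection second, and handles the intersection without an explicit $k=0$ versus $k\neq 0$ case split.
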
	
	\begin{proof}
On the one hand, given $\x \in \mathcal{E}$, define $\y \in \mathcal{E}$ by \[y_i \coloneqq x_i - \kappa \text{ with } \kappa = \left\{\begin{array}{ll}
x_m & \text{ if } k = 0 \\
\frac{\b x_{k+m} - x_{k}}{\b - 1} & \text{ if } k \ge 1
\end{array}\right..\]
Then $\b y_{k+m} -y_k =0$ hence $\y \in\mathcal{H}_{k,m}$. Note that $\x - \y \in \mathcal{L}$ hence \[\mathcal{E} = \mathcal{H}_{k,m} + \mathcal{L}. \]
		
On the other hand, assume $\x \in \mathcal{H}_{k,m} \cap \mathcal{L}$. Then $\b x_{k+m} - x_{k} = 0$ and $x_{k+m} = x_{k}$. Since $\b \neq 1$, we have $x_{k} = x_{k+m} = 0$. Moreover, $\x$ is a constant sequence. Thus, $\x$ vanishes identically; that is \[{\mathcal{H}_{k,m} \cap \mathcal{L} = \{\textbf{0}\}} . \qedhere\] 
	\end{proof}Denote by \[
	\p_{k,m} \colon\mathcal{E} \to \mathcal{E}
	\]
 the projection to $\mathcal{H}_{k,m}$ parallel to $\mathcal{L}$. In particular, $\p_{k,m}(\mathcal{E}) = \mathcal{H}_{k,m}$.
Consider the map $\mathcal{Q}: \mathcal{E} \to \mathcal{E}$ defined by 
	\[
	\mathcal{Q}(\x) = \y \quad \text{ with} \quad y_1 \coloneqq0 \text{ and }y_i = x_{i-1}^d, i \ge 1.
	\]
	\begin{definition}
Given $(k,m) \in \N \times \N^*,$ the map $\Fkm \coloneqq \mathcal{E} \to \mathcal{E}$ is defined as\[
		\Fkm\coloneqq \p_{k,m} \circ \mathcal{ Q} .
		\]
	\end{definition}
We shall now study some important properties of $\Fkm$.
\subsubsection{Properties of $\Fkm$}
 \begin{lemma}\label{lem:explicit form of Fkm}
Given $(k,m) \in \N \times \N^*,$ for every $\x,\y \in \mathcal{E}$, we have
 	\[
 	\Fkm(\x) = \y \Leftrightarrow \left\{
 	\begin{array}{l}
 	\b y_{k+m} - y_k = 0\\
 y_i = x_{i-1}^d + y_1  \quad	\text{ for all $ i \ge 2$}.
 	\end{array}
 	\right.
 	\]
In particular, with the convention $x_0 \coloneqq 0 $, we have
 	\begin{equation*}
 	y_1= \left\{\begin{array}{lc}
 	-x_{m-1}^d & \text{ if } k = 0,\\
 	-\frac{\b x_{k+m-1}^d -x_{k-1}^d}{\b - 1 } & \text{ if } k \ge 1.
 	\end{array} \right.
 	\end{equation*}
 	
 \end{lemma}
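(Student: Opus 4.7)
The plan is to unpack the composition $\Fkm = \p_{k,m} \circ \mathcal{Q}$ directly, using the explicit formula for the projection onto $\mathcal{H}_{k,m}$ already extracted in the proof of Lemma \ref{eq:decomposition of E}. First I would set $\mathbf{u} \coloneqq \mathcal{Q}(\x)$, so that $u_1 = 0$ and $u_i = x_{i-1}^d$ for every $i \ge 2$. Since $\p_{k,m}$ is projection onto $\mathcal{H}_{k,m}$ parallel to $\mathcal{L}$, and $\mathcal{L}$ is spanned by the constant sequence $\mathbf{1} = (1,1,\ldots)$, there is a unique $\kappa \in \C$ with $\y \coloneqq \p_{k,m}(\mathbf{u}) = \mathbf{u} - \kappa\,\mathbf{1}$. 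Reading coordinates gives $y_1 = -\kappa$ and, for every $i \ge 2$, $y_i = x_{i-1}^d - \kappa = x_{i-1}^d + y_1$, which is exactly the recurrence in the statement.

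For the equivalence itself, the forward implication is immediate: when $\y = \Fkm(\x)$, the recurrence just derived holds and $\y \in \mathcal{H}_{k,m}$ forces $\b y_{k+m} - y_k = 0$. For the converse I would note that the two displayed conditions form a triangular system: the recurrence determines $y_2,y_3,\ldots$ from $y_1$, and then membership in $\mathcal{H}_{k,m}$ pins down $y_1$ uniquely. Hence any $\y$ satisfying both conditions must coincide with $\p_{k,m}(\mathcal{Q}(\x)) = \Fkm(\x)$.

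It remains to compute $y_1 = -\kappa$ explicitly, which I would do by applying the formula for $\kappa$ from Lemma \ref{eq:decomposition of E} to the sequence $\mathbf{u}$. If $k = 0$, then $\mathcal{H}_{0,m} = \{y_m = 0\}$, so $\kappa = u_m = x_{m-1}^d$ and $y_1 = -x_{m-1}^d$. If $k \ge 1$, then $\kappa = \frac{\b u_{k+m} - u_k}{\b - 1} = \frac{\b x_{k+m-1}^d - x_{k-1}^d}{\b - 1}$, yielding the stated expression. The only point that requires a little attention is the boundary case $k = 1$: there $u_k = u_1 = 0$, which is consistent with the convention $x_0 \coloneqq 0$, so no separate case is needed.

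I do not anticipate a genuine obstacle: the entire argument is bookkeeping from the definitions of $\mathcal{Q}$ and $\p_{k,m}$, and the only care is to keep the conventions $x_0 = 0$ and $u_1 = 0$ aligned when specializing the projection formula.
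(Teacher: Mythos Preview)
Your proposal is correct and follows essentially the same approach as the paper: both unpack $\Fkm=\p_{k,m}\circ\mathcal{Q}$ by writing $\y=\mathcal{Q}(\x)-\kappa\,\mathbf{1}$ with $\kappa$ given by the projection formula from Lemma~\ref{eq:decomposition of E}. The only cosmetic difference is in the converse: you argue by uniqueness (the two conditions pin down $\y$), whereas the paper checks directly that $\y-\mathcal{Q}(\x)\in\mathcal{L}$ and $\y\in\mathcal{H}_{k,m}$; both are equally short and valid.
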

 \begin{proof}

 	Assume $ \x \in \mathcal{E}$ and $\y = \pi_{k,m}(\mathcal{Q}(\x)) = \Fkm(\x)$. On the one hand, $\y \in \pi_{k,m}(\mathcal{E}) = \mathcal{H}_{k,m}$, i.e. \[\b y_{k+m} - y_k = 0.\] On the other hand, set $\z = \mathcal{Q}(\x)$, i.e. $z_1 = 0$ and $z_i = x_{i-1}^d$ for all $i \ge 1$. Then since $\y =\pi_{k,m} (\z)$, for all $i \ge 1$,
 	\[
 	y_i = z_i - \kappa	\text{ with } \kappa \coloneqq \left\{\begin{array}{ll}
 	 	z_m & \text{ if } k = 0 \\
 	 	\frac{\b z_{k+m} - z_{k}}{\b -1} & \text{ if } k \ge 1
 	 	\end{array}\right..\]
 	In particular, \[y_1 = z_1 - \kappa= -\kappa =\left\{\begin{array}{lc}
 	-x_{m-1}^d & \text{ if } k = 0,\\
 	-\frac{\b x_{k+m-1}^d - x_{k-1}^d}{\b - 1} & \text{ if } k \ge 1.
 	\end{array} \right.\]whence for all $i \ge 2$, $y_i = z_i + y_1$, i.e. 
\[ 	y_i = x_{i-1}^d + y_1.\]
 	
 	Conversely, assume $\x,\y \in \mathcal{E}$ such that $\b y_{k+m} - y_k =0$ and for all $i \ge 2$, $y_i =x_{i-1}^d + y_1$. In particular, $\y \in \mathcal{H}_{k,m}$. Set $\z = \y - \mathcal{Q}(\x)$. Then for all $i \ge 2$, 
 	\[
 	z_i = y_i - x_{i-1}^d = y_1
 	\]
Moreover, $z_1= x_0^d + y_1= y_1$. Hence $\z \in \mathcal{L}$. In other words, \[\y = \pi_{k,m} (\mathcal{Q}(\x)) = \Fkm(\x). \qedhere\] 
 \end{proof}

	Although $\Fkm$ is defined on a vector space of infinite dimension, we will now see that the dynamics of $\Fkm$ is captured entirely by some finite dimensional vector space. Given a sequence $\x \in \mathcal{E}$, $\x$ is preperiodic of preperiod $k$ to a cycle of period $m$ if for all $i \ge k+1,  x_{i} = x_{i+m}$ and $k,m$ are the smallest integers satisfying such a property. Given ${(k,m) \in \N\times \N^*}$, let $\mathcal{P}_{k,m} \subset \mathcal{E}$ be the subspace of preperiodic sequences of preperiod at most $k$ to a cycle of period dividing $m$, i.e.
	\[
	\mathcal{P}_{k,m}\coloneqq \{ \x \in \mathcal{E} \mid x_{i+m} = x_{i} \text{ for } i \ge k+1  \}.
	\]
 Since sequences in $\mathcal{P}_{k,m}$ are uniquely determined by the first $k+m$ entries, the vector space $\mathcal{P}_{k,m}$ has finite dimension $k+m$.
\begin{definition}
Given $(k,m) \in \N \times \N^*,$ define \[\Mkm \coloneqq \mathcal{P}_{k,m} \cap \mathcal{H}_{k,m}.\]
\end{definition}
Note that the constant sequence $(1,1,\ldots)$ is in $\mathcal{P}_{k,m} \setminus \mathcal{H}_{k,m}$ and that $\mathcal{H}_{k,m}$ has codimension one in $\mathcal{E}$. Hence, $\M_{k,m}$ is a vector space of dimension $k+m-1$. The following two lemmas show the importance of $ \M_{k,m}$ to the dynamics of $\Fkm$.
	
\begin{lemma}\label{lem: moduli map is nondegenerate}
		We have $F_{k,m}(\M_{k,m}) = \M_{k,m}$ and $F_{k,m}\colon\M_{k,m} \to \M_{k,m}$ is a nondegenerate homogeneous map of degree $d$.
	\end{lemma}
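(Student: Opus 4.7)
The plan is to verify the three conditions in the statement separately: the inclusion $F_{k,m}(\M_{k,m})\subseteq\M_{k,m}$, the homogeneity of degree $d$, and the nondegeneracy (i.e.\ that $\0$ is the only preimage of $\0$). The surjectivity $F_{k,m}(\M_{k,m})=\M_{k,m}$ will then drop out for free: a nondegenerate homogeneous polynomial self-map of a finite-dimensional complex vector space descends to a holomorphic endomorphism of the associated projective space, which is automatically surjective.

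First I would prove the inclusion. Fix $\x\in\M_{k,m}$ and set $\y=F_{k,m}(\x)$. By construction $\y\in\pi_{k,m}(\mathcal{E})=\mathcal{H}_{k,m}$, so I only need $\y\in\mathcal{P}_{k,m}$, i.e.\ $y_{i+m}=y_i$ for every $i\ge k+1$. Using the explicit formula of Lemma \ref{lem:explicit form of Fkm}, namely $y_j=x_{j-1}^d+y_1$ for $j\ge 2$, this reduces to $x_{i+m-1}^d=x_{i-1}^d$. For $i\ge k+2$ we have $i-1\ge k+1$ and the equality follows directly from $\x\in\mathcal{P}_{k,m}$. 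The boundary case $i=k+1$ reduces to $x_{k+m}^d=x_k^d$, which I would obtain from the defining relation $\b x_{k+m}-x_k=0$ of $\mathcal{H}_{k,m}$ (with convention $x_0=0$) combined with $\b^d=1$.

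Next, homogeneity of degree $d$ is immediate from the characterization in Lemma \ref{lem:explicit form of Fkm}: the relation $\b y_{k+m}-y_k=0$ is linear in $\y$, while $y_i-y_1=x_{i-1}^d$ is of degree $d$ in $\x$, so substituting $t\x$ for $\x$ forces $\y$ to be replaced by $t^d\y$. For nondegeneracy, suppose $F_{k,m}(\x)=\0$. Then $y_1=0$, and $y_i=x_{i-1}^d+y_1=0$ for $i\ge 2$ gives $x_j=0$ for every $j\ge 1$, hence $\x=\0$. Combined with the previous steps, this shows that $F_{k,m}$ restricts to a nondegenerate homogeneous polynomial map of degree $d$ on the finite-dimensional space $\M_{k,m}$; invoking the standard projective-extension argument then yields surjectivity.

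The only step that requires care is the boundary index $i=k+1$ in the inclusion, where both the hyperplane condition defining $\mathcal{H}_{k,m}$ and the arithmetic identity $\b^d=1$ must be used simultaneously; without either hypothesis the image would generally leave $\M_{k,m}$. Everything else is either a short computation from the explicit form or a citation of a standard fact about homogeneous endomorphisms.
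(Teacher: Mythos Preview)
Your proposal is correct and follows essentially the same approach as the paper: both verify the inclusion $F_{k,m}(\M_{k,m})\subseteq\M_{k,m}$ via the boundary computation $x_{k+m}^d=x_k^d$ from $\beta^d=1$, obtain homogeneity from the factorization (or the explicit formula), show nondegeneracy by tracing $F_{k,m}(\x)=\0$ back to $\x=\0$, and conclude surjectivity from the nondegenerate homogeneous structure on a finite-dimensional space. The only cosmetic difference is that the paper argues through the decomposition $F_{k,m}=\pi_{k,m}\circ\mathcal{Q}$ (first showing $\mathcal{Q}(\M_{k,m})\subset\mathcal{P}_{k,m}$, then using $\Ker\pi_{k,m}=\mathcal{L}$ for nondegeneracy), whereas you work directly from the explicit formula of Lemma~\ref{lem:explicit form of Fkm}.
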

	\begin{proof}
Let us first prove that $\mathcal{Q}(\Mkm) \subseteq \mathcal{P}_{k,m}.$ Assume $\x \in \M_{k,m}$.  Since $\x \in \mathcal{P}_{k,m} \cap \mathcal{H}_{k,m}$, $\x$ has preperiod $k$ and period dividing $m$ and $\b x_{k+m} - x_{m} =0 $. In particular, since $\b^d = 1$, we have $x_{k+m}^d = x_{m}^d.$ Consequently, setting $\y \coloneqq \mathcal{Q}(\x)$,
		\[
\left\{
\begin{array}{rc}
y_{k+1} = x_{k}^d +y_1 = x_{k+m}^d +y_1 = y_{k+m+1}& \\
y_{i} = x_{i-1}^d +y_1 = x_{m+i-1}^d +y_1 = y_{i+m} & \text{ for all $i \ge k+2$}.
\end{array}
\right.		\]
Thus $\mathcal{Q} (\x) \in \mathcal{P}_{k,m}$.

Since $\p_{k,m}(\mathcal{P}_{k,m}) \subset \mathcal{P}_{k,m}$, $F_{k,m} (\Mkm)  \subset \mathcal{P}_{k,m}$. Since $\Fkm(\mathcal{E}) \subset \mathcal{H}_{k,m}$,
	\[
\Fkm(\Mkm) \subset \mathcal{P}_{k,m} \cap \mathcal{H}_{k,m} =  \Mkm.\]
		Clearly, the map $\mathcal{Q}$ is homogeneous of degree $d$ and the map $\pi_{k,m}$ is homogeneous of degree $1$, thus $F_{k,m}$ is homogeneous of degree $d$. 
		
		Let us now prove that $\Fkm$ is nondegenerate, i.e. $\Fkm^{-1}(\boldsymbol{0})= \{ \boldsymbol{0} \}$. Assume $ \x \in \Mkm$ and \[{\p_{k,m} \circ \mathcal{Q}(\x) = \Fkm(\x) = 0}.\] Then $\boldsymbol{y} \coloneqq \mathcal{Q}(\x) \in \Ker(\pi_{k,m}) = \mathcal{L}.$ By definition of $\mathcal{Q}$, $y_1 =0$. Since $\y \in \mathcal{L}$, $\boldsymbol{y}$ is a constant sequence thus $\y = \textbf{0}$. This implies that $x_i = 0$ for all $i \ge 1$, i.e. $\x = \textbf{0}$. 
		
		Since $\Mkm$ has finite dimension and $\Fkm \colon \Mkm \to \Mkm$ is homogeneous and nondegenerate, $\Fkm$ is surjective, i.e. $F_{k,m}(\M_{k,m}) = \M_{k,m}$.
	\end{proof}

	\begin{lemma}\label{lem: largest invariant}
	We have that	$\bigcap\limits_{n \ge 1} \Fkm^{\circ n}(\mathcal{E}) = \Mkm.$
	\end{lemma}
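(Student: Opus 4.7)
The plan is to prove the two inclusions separately. The inclusion $\Mkm \subseteq \bigcap_{n \ge 1} \Fkm^{\circ n}(\mathcal{E})$ is immediate from the previous lemma: since $\Fkm(\Mkm) = \Mkm$, one has $\Mkm = \Fkm^{\circ n}(\Mkm) \subseteq \Fkm^{\circ n}(\mathcal{E})$ for every $n \ge 1$.

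For the reverse inclusion I will prove by induction on $n \ge 1$ the sharper claim that every $\x \in \Fkm^{\circ n}(\mathcal{E})$ satisfies $\b x_{k+m} - x_k = 0$ together with the $(n-1)$ partial periodicity relations
\[ x_{k+m+j} = x_{k+j}, \qquad j = 1, \ldots, n-1. \]
The case $n=1$ is exactly the inclusion $\Fkm(\mathcal{E}) \subseteq \mathcal{H}_{k,m}$, which is built into the definition $\Fkm = \pi_{k,m} \circ \mathcal{Q}$. For the inductive step I will write $\x = \Fkm(\y)$ with $\y \in \Fkm^{\circ n}(\mathcal{E})$ satisfying the level-$n$ relations, and apply the explicit recursion $x_i = y_{i-1}^d + x_1$ from Lemma \ref{lem:explicit form of Fkm}. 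For $j=1$, the desired equality $x_{k+m+1} = x_{k+1}$ reduces to $y_{k+m}^d = y_k^d$, and this is exactly where the root-of-unity hypothesis $\b^d = 1$ enters: raising $\b y_{k+m} = y_k$ to the $d$-th power kills the twist. For $2 \le j \le n$, the equality $x_{k+m+j} = x_{k+j}$ reduces instead to $y_{k+m+(j-1)} = y_{k+(j-1)}$, which is part of the inductive hypothesis.

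Taking the intersection over all $n$, any $\x \in \bigcap_{n \ge 1} \Fkm^{\circ n}(\mathcal{E})$ must satisfy $\b x_{k+m} - x_k = 0$ and $x_{i+m} = x_i$ for every $i \ge k+1$, so $\x$ belongs to $\mathcal{H}_{k,m} \cap \mathcal{P}_{k,m} = \Mkm$, finishing the argument.

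The only real subtlety is the upgrade from the single twisted equation $\b x_{k+m} = x_k$ cutting out $\mathcal{H}_{k,m}$ to the honest periodicity relations $x_{k+m+j} = x_{k+j}$ that cut out $\mathcal{P}_{k,m}$: this happens at a rate of exactly one new relation per iteration and depends crucially on $\b^d = 1$. The case $k=0$ (where the convention $x_0 \coloneqq 0$ collapses the twist to $x_m = 0$) is handled by the same formulas without modification.
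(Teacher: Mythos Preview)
Your proof is correct and follows essentially the same approach as the paper: both prove by induction on $n$ that elements of $\Fkm^{\circ n}(\mathcal{E})$ satisfy the first $n-1$ periodicity relations $x_{k+j+m}=x_{k+j}$ beyond the defining twisted relation of $\mathcal{H}_{k,m}$, with the key use of $\b^d=1$ occurring exactly where you place it. The only differences are cosmetic (you start the induction at $n=1$, the paper at $n=2$; your indexing is by $j$, the paper's by $i=k+j$).
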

	\begin{proof}
		According to the previous lemma, $\Fkm(\Mkm) = \Mkm$. Thus,
		\[
		\mathcal{M}_{k,m} \subseteq \bigcap\limits_{n \ge 1} \Fkm^{\circ n}(\mathcal{E}).
		\]
		Conversely, it is enough to prove that 
		\begin{equation}\label{eq: Mkm}
		\bigcap\limits_{n \ge 2} \Fkm^{\circ n}(\mathcal{E}) \subset \cp_{k,m}.		
		\end{equation}
Indeed, since $\Fkm (\mathcal{E}) \subset \mathcal{H}_{k,m}$ and $\mathcal{M}_{k,m} = \mathcal{H}_{k,m} \cap \mathcal{P}_{k,m}$, the inclusion \eqref{eq: Mkm} implies that
\[ \Fkm(\mathcal{E}) \cap \bigcap\limits_{n \ge 2} \Fkm^{\circ n}(\mathcal{E}) \subseteq \mathcal{H}_{k,m} \cap \mathcal{P}_{k,m},
\]
and hence
\[\bigcap\limits_{n \ge 1} \Fkm^{\circ n}(\mathcal{E}) \subseteq \Mkm \subseteq \bigcap\limits_{n \ge 1} \Fkm^{\circ n}(\mathcal{E}).\]

To prove \eqref{eq: Mkm}, we show the following claim: for all $n \ge 2$, if $\y \in \Fkm^{\circ n}(\mathcal{E})$, then $y_{i+m} = y_{i}$ for all $ i \in \{ k+1,\ldots,k+n-1\}$.
		 
		 Let us prove this claim by induction in $n$. If $n=2$, assume $\y \in \Fkm^{\circ 2}(\mathcal{E})$, i.e. $\y =\Fkm^{\circ 2}(\x)$ for some $\x \in \mathcal{E}$. Setting $\z = \Fkm(\x)$, according to Lemma \ref{lem:explicit form of Fkm}, we have $\beta z_{k+m} - z_{k} = 0$. Thus, since $\y = \Fkm(\z)$ and since $\beta^d = 1$, also according to Lemma \ref{lem:explicit form of Fkm}, we have
		 \[
		 y_{k+1+m} = z_{k+m}^d + y_1 = z_{k}^d + y_1 = y_{k+1},
		 \]
i.e. the claim is true for $n = 2.$

 Assume that it holds for some $n > 2$. Assume $\boldsymbol{y} \in \Fkm^{\circ(n+1)}(\mathcal{E}),$ i.e. $\boldsymbol{y} = \Fkm(\x)$ with $\x \in \Fkm^{\circ n}(\mathcal{E})$. The induction hypothesis implies that\[  x_{i+m-1} = x_{i-1} \text{ for all $i \in \{ k+2,\ldots,k+n \}$},\] so that \[y_{i+m} =y_i \text{ for all $ i \in \{k+2,\ldots,k+n\}$}.\] In addition, since $\x \in \mathcal{H}_{k,m}$, $\b x_{k+m} = x_k$, so that $y_{k+m+1} = y_{k+1}$. Thus the claim is true for $n+1$.
	\end{proof}

\subsection{The main result about $F_{k,m}$}
Lemma \ref{lem: moduli map is nondegenerate} and Lemma \ref{lem: largest invariant} allow us to restrict our study to the dynamics of $\Fkm$ on $\Mkm$. With a slight abuse of notations, from now on, we shall denote by $\Fkm$ the restriction of ${\Fkm\colon \mathcal{E} \to \mathcal{E}}$ to $\Mkm$. The following result sums up the properties of $F_{k,m}\colon\M_{k,m} \to \M_{k,m}$ which are important for us.
\begin{themprime}\label{thm: Koch prime}
Given $(k,m) \in \N \times \N^*$ and let $\z \in \Mkm \setminus \{ \boldsymbol{0} \}$ be a fixed point of the map $\Fkm\colon\Mkm \to \Mkm$. Let $k'$ be the preperiod of the sequence $\z$ and $m'$ be its period. Then,
\begin{enumerate}
	\item\label{itm:first} the polynomial $P(t) = t^d + z_1 \in \C[t]$ is post-critically finite and we have ${\z = (P^{\circ j}(0))_{j \ge 1}}$; in particular, the critical value $z_1$ of $P$ is preperiodic of preperiod $k'$ to a cycle of period $m'$ of multiplier $\lambda$,
	\item\label{itm:second} there exists a partial order $\preceq$ on $\N \times \N^*$ such that $(k',m') \preceq (k,m)$ if and only if $\M_{k',m'} \subseteq \Mkm$ is a $\Fkm$-invariant subspace and the restriction of $\Fkm$ to $\M_{k',m'}$ is $F_{k',m'}$,
	\item\label{itm:third} if $k+m-1 \ge 2$, $F_{k,m}\colon\M_{k,m} \to \M_{k,m}$ and $G_{k,m}\colon\C^{k+m-1} \to \C^{k+m-1}$ are holomorphically conjugate,
	\item\label{itm:fourth} $\Spec D_{\z} F_{k',m'} \subset \C \setminus \overline{\D}$,
	\item\label{itm:fifth} If $(k,m) \neq (k',m')$, \[\Spec \left(D_{\z} \Fkm\right)^*|_{ (\M_{k',m'})^0} = \left\{\begin{array}{ll}
	\{0\} & \text{ if $k' =0$}\\
	\{ \mu \mid \mu^m = \lambda^{\frac{m}{m'}}, \mu^{m'} \neq \lambda \} & \text{ if $k' \neq 0$}
	\end{array}\right..\]
	where $\M_{k',m'}^0 = \{\omega \in \Mkm^* \mid \omega|_{\M_{k',m'}} \equiv 0 \}$ is the annihilator of $ \M_{k',m'}$ in $\Mkm$
\end{enumerate}
\end{themprime}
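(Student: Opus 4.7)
Parts (1), (2), and (3) reduce to concrete calculations. For (1), I would apply Lemma \ref{lem:explicit form of Fkm} to the fixed-point equation $\Fkm(\z) = \z$: this gives $z_i = z_{i-1}^d + z_1 = P(z_{i-1})$ for $i \ge 2$ (with $z_0 \coloneqq 0$), so $z_i = P^{\circ i}(0)$ by induction, and the critical orbit of $P$ inherits the preperiod $k'$ and period $m'$ of the sequence $\z$. For (2), I would define $(k', m') \preceq (k, m)$ to hold iff $m' \mid m$ together with the conditions $m' \mid k$ when $k' = 0$, or $k = k'$ when $k' \ge 1$; reflexivity, antisymmetry, and transitivity follow immediately from divisibility. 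The inclusion $\M_{k', m'} \subseteq \Mkm$ and the identity $\Fkm|_{\M_{k', m'}} = F_{k', m'}$ follow by inspecting the defining equations using the period $m'$ and the $\beta$-relation $\beta x_{k' + m'} = x_{k'}$, and by checking that the scalars $\kappa$ defining the two projections $\p_{k, m}$ and $\p_{k', m'}$ coincide on $\mathcal{Q}(\M_{k', m'})$. For (3), the formulas for $F_{0, m}$ and $G_{0, m}$ already agree on $\M_{0, m}$ under the natural identification with $\C^{m - 1}$; for $k \ge 1$ I would exhibit the linear isomorphism $\phi \colon \C^{k + m - 1} \to \Mkm$ defined by $\phi(x)_i = x_i - w(x)$ with $w(x) = \frac{\beta x_{k + m - 1} - x_{k - 1}}{\beta - 1}$, and verify $\Fkm \circ \phi = \phi \circ G_{k, m}$ by a short substitution exploiting $\beta^d = 1$.

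For (4), the plan is to reduce to Theorem \ref{thm:origin} by showing $\z \notin PC(F_{k', m'})$. The critical set of $F_{k', m'} = \p_{k', m'} \circ \mathcal{Q}$ is the locus where the diagonal derivative of $\mathcal{Q}$ has a kernel direction tangent to $\M_{k', m'}$; this forces some coordinate of $\x$ to vanish at an index not already required by the definition of $\M_{k', m'}$, which in turn pushes $\x$ into a strictly smaller subspace $\M_{k'', m''}$ with $(k'', m'') \prec (k', m')$. Invariance from (2) then gives $PC(F_{k', m'}) \subseteq \bigcup_{(k'', m'') \prec (k', m')} \M_{k'', m''}$. Since $(k', m')$ is the exact preperiod and period of $\z$ by hypothesis, $\z$ avoids every such proper subspace, so Theorem \ref{thm:origin} applies and yields the spectral estimate.

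Part (5) is the main obstacle and requires explicit linear-algebra work. Since $\M_{k', m'}$ is $\Fkm$-invariant by (2), the dual $(D_{\z} \Fkm)^*$ preserves the annihilator $\M_{k', m'}^0$, and I plan to use the natural functionals $\omega_i(\x) \coloneqq x_i - x_{i + m'}$ (for $i$ in a suitable range, with an adjustment in the case $k' = 0$) as an explicit basis. A direct computation using $D \mathcal{Q} = \mathrm{diag}(0, d x_1^{d - 1}, d x_2^{d - 1}, \ldots)$ together with the formula for $\p_{k, m}$ yields $(D_{\z} \Fkm)^* \omega_i = P'(z_{i - 1}) \, \omega_{i - 1}$ at "interior" indices, while at the boundary index $i = k + 1$ the image is a specific linear combination of "shifted" $\omega$'s with overall coefficient $-\beta P'(z_{k'})$, the $\beta$-factor arising from $z_{k' + m'} = z_{k'}/\beta$. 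Iterating, I would show that $\omega_{k + 1}$ is a cyclic vector and that $(D_{\z} \Fkm)^{*m} \omega_{k + 1} = \lambda^{m/m'} \omega_{k + 1}$, so the minimal polynomial of $(D_{\z} \Fkm)^*$ on $\M_{k', m'}^0$ divides $\mu^m - \lambda^{m/m'}$. A dimension count combined with the observation that any eigenvector with $\mu^{m'} = \lambda$ must be tangent to $\M_{k', m'}$ (hence is already accounted for in (4)) then identifies the minimal polynomial with $(\mu^m - \lambda^{m/m'})/(\mu^{m'} - \lambda)$, whose roots are precisely the $\mu$ with $\mu^m = \lambda^{m/m'}$ and $\mu^{m'} \ne \lambda$. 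When $k' = 0$, the point $0$ lies on the periodic cycle of $P$ and $P'(0) = 0$ forces $\lambda = 0$, so the same cyclic computation renders $(D_{\z} \Fkm)^*$ nilpotent, with spectrum $\{0\}$ as asserted.
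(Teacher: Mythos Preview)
Your treatment of (1)--(3) is essentially the paper's: the same fixed-point recursion, the same partial order, and (up to bookkeeping) the same conjugacy $G_{k,m}=\mathfrak{d}\circ\tau$, $\widetilde{F}_{k,m}=\tau\circ\mathfrak{d}$.

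For (4), the reduction to Koch's result is legitimate, but your argument that $\z\notin PC(F_{k',m'})$ is wrong as written. A point $\x$ with some coordinate $x_i=0$ is \emph{not} forced into a smaller $\M_{k'',m''}$: those subspaces are cut out by periodicity conditions and the $\beta$-relation, not by a single vanishing coordinate, and the inclusion $PC(F_{k',m'})\subseteq\bigcup_{(k'',m'')\prec(k',m')}\M_{k'',m''}$ is false (the right-hand side has strictly smaller dimension than a hypersurface in general). The paper instead shows $CV(F_{k',m'})\subseteq\Delta_{k',m'}:=\{\x: x_i=x_j\text{ for some }i<j\}$, that $\Delta_{k',m'}$ is forward-invariant, and that a fixed point of \emph{exact} type $(k',m')$ cannot lie in $\Delta_{k',m'}$. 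That is the missing lemma. (The paper also supplies, for completeness, a self-contained proof of (4) via Thurston contraction on quadratic differentials, bypassing Theorem~\ref{thm:origin} entirely; your route via the citation is shorter once the $\Delta$-argument is in place.)

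For (5), your framework---basis $\omega_i=\text{(coord)}_i-\text{(coord)}_{i+m'}$ of the annihilator, cyclic action of $L^*$, minimal polynomial dividing $X^m-\lambda^{m/m'}$---matches the paper's. The gap is the exclusion of the roots with $\mu^{m'}=\lambda$. Your justification, that an eigenvector with $\mu^{m'}=\lambda$ ``must be tangent to $\M_{k',m'}$,'' does not make sense: eigenvectors of $L^*|_{\M_{k',m'}^0}$ are linear forms \emph{vanishing} on $\M_{k',m'}$, and there is no a priori reason the spectra on $\M_{k',m'}$ and on $\M_{k',m'}^0$ are disjoint. The paper closes this gap with a telescoping identity: writing $\beta_j=\omega_{\underline{j}}-\omega_{\underline{j+m'}}$ for $j\in\Z/m\Z$, one has $\sum_{l=0}^{p-1}\beta_{j+lm'}=0$ for every $j$ (the sum telescopes). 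Combined with $(L^*)^{m'}\beta_j=\lambda\,\beta_{j-m'}$, this yields $\Id+T+\cdots+T^{m-1}=0$ for $T=L^*/\nu$ (any $m'$-th root $\nu$ of $\lambda$), so $1\notin\Spec T$, i.e.\ no $m'$-th root of $\lambda$ is an eigenvalue of $L^*$. A dimension count ($\dim\M_{k',m'}^0=m-m'$) plus simplicity from the cyclic structure then gives equality. Your ``boundary coefficient $-\beta P'(z_{k'})$'' is also off: with representatives $\underline{j}\in\{k+1,\ldots,k+m\}$ and the relations $z_k=\beta z_{k+m}$, $\omega_k=\beta\,\omega_{k+m}$, the $\beta$-powers cancel via $\beta^d=1$ and the recurrence $L^*\beta_j=\sigma_{j-1}\beta_{j-1}$ is genuinely cyclic with no stray $\beta$-factor.
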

In particular, we shall see that Theorem \ref{thm:theoremA'} is a direct consequence of Theorem \ref{thm: Koch prime}. The rest of this article is devoted to the proof of Theorem \ref{thm: Koch prime}. Item \ref{itm:first} is proved in Proposition \ref{prop: critical orbit}. The partial order $\preceq$ will be introduced in Definition \ref{def: partial order} and item \ref{itm:second} will be proved in Proposition \ref{prop: partial order}. Item \ref{itm:third} is proved in Lemma \ref{prop: FKm and GKm}. Item \ref{itm:fourth} is due to Koch, and we recall its proof in Proposition \ref{prop: fixed point outside} for the sake of completeness. Our main contribution is the proof of item \ref{itm:fifth} which will be proved in Proposition \ref{prop: eigenvalue inside}. Finally, we prove Theorem \ref{thm:theoremA'} by using Theorem \ref{thm: Koch prime}.
\section{Dynamics of $\Fkm$}

\subsection{$F_{k,m}$ is conjugate to $G_{k,m}$}\label{sec: conjugate to Sarah maps}
The following lemma assures that the class of maps $\Fkm$ for $ (k,m) \in \N \times \N^*$ is a good alternative when one wants to study $G_{k,m}$. 
\begin{lemma}\label{prop: FKm and GKm}
Let $(k,m) \in \N \times \N^*$ be such that $k+m-1 \ge 2$, the maps $\Fkm$ and $G_{k,m}$ are holomorphically conjugate.
\end{lemma}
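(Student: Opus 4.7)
The plan is to exhibit an explicit linear isomorphism $\Phi\colon \M_{k,m} \to \C^{k+m-1}$ that intertwines $F_{k,m}$ and $G_{k,m}$. When $k = 0$, I would identify $\M_{0,m}$ with $\C^{m-1}$ via $\x \mapsto (x_1,\ldots,x_{m-1})$ (using $x_m = 0$ from $\mathcal{H}_{0,m}$); under this identification, the formula of Lemma \ref{lem:explicit form of Fkm} for $F_{0,m}$ is literally Koch's formula for $G_{0,m}$, and $\Phi = \id$ does the job. I therefore concentrate on $k \ge 1$.

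For $k \ge 1$, the map I would try is the shift-and-subtract
\[
\Phi(\x)_j := x_{j+1} - x_1, \qquad j = 1, \ldots, k+m-1,
\]
with the convention $x_{k+m} := x_k/\beta$ forced by $\x \in \mathcal{H}_{k,m}$. Invertibility is immediate: source and target both have dimension $k+m-1$, and if $\Phi(\x) = \boldsymbol{0}$ then $x_2 = \cdots = x_{k+m-1} = x_1$ together with $x_k/\beta = x_1$ forces $(1-\beta)x_1 = 0$, whence $\x = \boldsymbol{0}$ since $\beta \neq 1$.

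The conjugacy $\Phi \circ F_{k,m} = G_{k,m} \circ \Phi$ then follows from a direct two-sided computation. Setting $\y = F_{k,m}(\x)$, Lemma \ref{lem:explicit form of Fkm} gives $y_i = x_{i-1}^d + y_1$ for every $i \ge 2$, hence $\Phi(\y)_j = y_{j+1} - y_1 = x_j^d$ for $j = 1, \ldots, k+m-1$ (the boundary index $j = k+m-1$ uses $y_{k+m} = x_{k+m-1}^d + y_1$, consistent with $\beta y_{k+m} = y_k$). On the other side, writing $\boldsymbol{u} := \Phi(\x)$ and exploiting $u_{k-1} = x_k - x_1$ (with $u_0 := 0$ so that this still makes sense for $k = 1$) and $u_{k+m-1} = x_k/\beta - x_1$, Koch's shift collapses:
\[
c(\boldsymbol{u}) = \frac{\beta u_{k+m-1} - u_{k-1}}{\beta-1} = \frac{\beta(x_k/\beta - x_1) - (x_k - x_1)}{\beta - 1} = -x_1.
\]
Consequently $G_{k,m}(\boldsymbol{u})_i = (u_{i-1} - c(\boldsymbol{u}))^d = (x_i - x_1 - (-x_1))^d = x_i^d$ for every $i = 1, \ldots, k+m-1$. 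Both sides therefore equal $(x_1^d, \ldots, x_{k+m-1}^d)$, as desired.

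The only real difficulty is guessing the correct $\Phi$: it encodes the change of unicritical-polynomial normalisation between the model $P_\x(t) = t^d + y_1$ used implicitly by $F_{k,m}$ (critical point at $0$) and the model $Q_\x(t) = (t-c)^d$ used by Koch in $G_{k,m}$ (critical value at $0$). Once this map is in hand, every subsequent step is routine bookkeeping.
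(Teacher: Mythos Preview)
Your proof is correct. For $k=0$ it coincides with the paper's (both observe that the natural truncation $i_{0,m}(\x)=(x_1,\ldots,x_{m-1})$ already turns $F_{0,m}$ into $G_{0,m}$). For $k\ge 1$ you and the paper arrive at the \emph{same} conjugating isomorphism, but by different routes: the paper first passes to $\widetilde{F_{k,m}}=i_{k,m}\circ F_{k,m}\circ i_{k,m}^{-1}$ on $\C^{k+m-1}$ and then observes the factorisations $G_{k,m}=\mathfrak{d}\circ\tau$ and $\widetilde{F_{k,m}}=\tau\circ\mathfrak{d}$, where $\mathfrak{d}$ is the coordinatewise $d$-th power and $\tau$ is the affine shift; conjugacy via $\tau$ is then automatic from the identity $\tau\circ(\mathfrak{d}\circ\tau)=(\tau\circ\mathfrak{d})\circ\tau$, with no further computation. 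Your $\Phi$ is precisely $\tau^{-1}\circ i_{k,m}$, so you have rediscovered the same map and checked the intertwining by hand. The paper's factorisation trick is slicker and explains \emph{why} the conjugacy exists; your approach has the merit of making the conjugacy explicit from the outset and giving it the dynamical interpretation (change of normalisation of the unicritical polynomial) that you describe.
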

\begin{proof} Recall that when $k=0, m \ge 3$, we have
	\[
	G_{0,m}: \left(
	\begin{array}{c}
	x_1\\
	x_2\\
	\vdots\\
	x_{m-1}
	\end{array}
	\right) \mapsto \left(
	\begin{array}{c}
	- x_{m-1}^d\\
	x_1^d - x_{m-1}^d\\
	\vdots\\
	x_{m-2}^d - x_{m-1}^d
	\end{array}
	\right).
	\]
	and when $k \ge 1, m \ge 2$, we have
	\[
	G_{k,m}: \left(
	\begin{array}{c}
	x_1\\
	x_2\\
	\vdots\\
	x_{k+m-1}
	\end{array}
	\right) \mapsto \left(
	\begin{array}{c}
	\left( - \frac{\b x_{k+m-1} - x_{k-1}}{\b-1} \right)^d\\
	\left(x_1 - \frac{\b x_{k+m-1} - x_{k-1}}{\b-1} \right)^d\\\\
	\vdots\\
	\left( x_{k+m-2} - \frac{\b x_{k+m-1} - x_{k-1}}{\b-1} \right)^d
	\end{array}
	\right).
	\]
Let $i_{k,m} \colon \Mkm \to \C^{k+m-1}, i_{k,m}(\x) = (x_1,\ldots,x_{k+m-1})$ and set \[\widetilde{\Fkm} = i_{k,m} \circ \Fkm \circ i_{k,m}^{-1}.\] Then we have
 when $k = 0$, 
\[
\widetilde{F_{0,m}}: \left(
\begin{array}{c}
x_1\\
x_2\\
\vdots\\
x_{m-1}
\end{array}
\right) \mapsto \left(
\begin{array}{c}
- x_{m-1}^d\\
x_1^d - x_{m-1}^d\\
\vdots\\
x_{m-2}^d - x_{m-1}^d
\end{array}
\right).
\]
and when $k \ge 1$,
\[
\widetilde{F_{k,m}}: \left(
\begin{array}{c}
x_1\\
x_2\\
\vdots\\
x_{k+m-1}
\end{array}
\right) \mapsto \left(
\begin{array}{c}
- \frac{\b x_{k+m-1}^d - x_{k-1}^d}{\b-1} \\x_1^d - \frac{\b x_{k+m-1}^d - x_{k-1}^d}{\b-1}\\
\vdots\\
x_{k+m-2}^d - \frac{\b x_{k+m-1}^d - x_{k-1}^d}{\b-1} 
\end{array}
\right).
\]
It is enough to show that $G_{k,m}$ and $\widetilde{\Fkm}$ are conjugate. Indeed, let $\tau \colon \C^{k+m-1} \to \C^{k+m-1}$ be a linear map of the following form
\[
\tau\left(
\begin{array}{c}
x_1\\
x_2\\
\vdots\\
x_{k+m-1}
\end{array}
\right) \mapsto \left(
\begin{array}{c}
\tau_1\\
x_1 + \tau_1\\
\vdots\\
x_{k+m-2}+\tau_1
\end{array}
\right) \text{ with } \tau_1 = \begin{cases}
-x_{m-1} \text{ when $k = 0$}\\
- \frac{\b x_{k+m-1} - x_{k-1}}{\b-1} \text{ when $k \ge 1$}
\end{cases}
\]	
and set 
\[
\mathfrak{d}(x_1,\ldots,x_{k+m-1}) = (x_1^d,\ldots,x_{k+m-1}^d)
\]
then $G_{k,m} = \mathfrak{d} \circ \tau, \widetilde{\Fkm} = \tau \circ \mathfrak{d}$. Thus,
\[
\tau \circ G_{k,m} = \widetilde{\Fkm } \circ \tau
\]
Note that $\tau$ is an isomorphism, whence $G_{k,m}$ and $\widetilde{\Fkm}$ are conjugate.

	\end{proof}
\subsection{Comparing $\Fkm$ by a partial order $\preceq$}
Our initial expectation was that for arbitrary pairs $(k_1,m_1)$ and $(k_2,m_2)$ in $\N \times \N^*$, the maps $F_{k_1,m_1}$ and $F_{k_2,m_2}$ would agree on the intersection $\mathcal{M}_{k_1,m_1} \cap \mathcal{M}_{k_2,m_2}$. However this is not true as shown in the following example. Consider the case $ d = 2$ and $\beta = -1$, the sequence
\[
\x\coloneqq\{2,0,0,\ldots,0,\ldots\}
\]
Then $\x \in \M_{2,1} \cap \M_{3,1}$ and 
\[
\mathcal{Q}(\x)= \{0,4,0,0,0,0,0,\ldots\}
\]
However,
\[
F_{2,1}(\x)=\{-2,2,-2,-2,-2,-2,\ldots\} \text{ and } F_{3,1}(\x) = \{0,4,0,0,0,0,0,\ldots\}.
\]
We will now see that if some order $(k',m') \preceq (k,m)$ is satisfied, with the fixed $d$ and $\b$, we have $\M_{k',m'} \subseteq \Mkm$ and $F_{k',m'}$ is the restriction of $\Fkm $ to $\M_{k',m'}$.
\begin{definition}\label{def: partial order}
	Let $\preceq$ be the partial order on $\N \times \N^*$ defined by \[
	(k',m') \preceq (k,m) \Leftrightarrow \left\{
	\begin{array}{l}
	m' \text{ divides } m,\\
	\text{either } k'=k \text{ or } (k'=0 \text{ and } m' \text{ divides } k).
	\end{array}
	\right.
	\]
	The strict order $\prec$ is defined by \[(k',m') \prec (k,m) \Leftrightarrow (k',m') \preceq (k,m) \text{ and } (k',m') \neq (k,m). \]
\end{definition}
\begin{proposition}\label{prop: partial order} For two pairs of integers $(k,m),(k',m') \in \N \times \N^*$, we have that
	\[
	(k',m') \preceq (k,m) \Leftrightarrow
	\mathcal{M}_{k',m'} \subseteq \mathcal{M}_{k,m}
	\]
	Moreover, if $\M_{k',m'} \subseteq \Mkm$ then $F_{k,m}|_{\mathcal{M}_{k',m'}} = F_{k',m'}.$
\end{proposition}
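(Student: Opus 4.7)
The plan is to prove the biconditional by establishing both implications separately, and then deduce the moreover statement using the explicit formula for $\Fkm$ from Lemma \ref{lem:explicit form of Fkm}.

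For the forward direction, I fix $\x \in \M_{k',m'}$ and verify the two defining conditions of $\Mkm$. Since $m' \mid m$, the $m'$-periodicity of $\x$ from index $k'+1$ (in the case $k' = k$) or from index $1$ (in the case $k' = 0$, where $m'\mid k$ implies $k+1 \ge 1$) upgrades to $m$-periodicity from index $k+1$, so $\x \in \mathcal{P}_{k,m}$. For the hyperspace condition $\b x_{k+m} - x_k = 0$, I split cases: if $k = k'$, periodicity lets me replace $x_{k+m}$ by $x_{k'+m'}$ and invoke $\x \in \mathcal{H}_{k',m'}$; if $k' = 0$ and $m' \mid k$, then both $x_k$ and $x_{k+m}$ reduce by periodicity to $x_{m'} = 0$.

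For the reverse direction, the idea is to construct explicit test vectors in $\M_{k',m'}$ whose membership in $\Mkm$ extracts each necessary condition. A vector supported on a single residue class modulo $m'$ in the periodic range forces $m' \mid m$ via the periodicity condition of $\mathcal{P}_{k,m}$. To establish the dichotomy $k = k'$ or $k' = 0$, when $k' \ge 1$ I use the vector $\x$ with $x_{k'} = 1$, $x_{k'+m'} = 1/\b$ and other entries zero, which respects $\b x_{k'+m'} = x_{k'}$. If $k < k'$, the periodicity relation at index $i = k'$ yields $1 = 1/\b$; if $k > k'$ with $m'\mid (k-k')$, then reducing via periodicity shows the hyperspace equation at index $k$ reads $\b\cdot(1/\b) - (1/\b) = 1 - 1/\b = 0$; both contradict $\b \neq 1$. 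When $k' = 0$ and $k \ge 1$, varying single-residue test vectors over $j \in \{1,\ldots,m'-1\}$ combined with the reduced hyperspace equation $(\b-1)x_k = 0$ forces $m' \mid k$. The degenerate case $(k',m') = (0,1)$ gives $\M_{0,1} = \{\0\}$ and the partial order holds automatically.

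For the moreover part, given $\x \in \M_{k',m'}$, set $\y = \Fkm(\x)$ and $\y' = F_{k',m'}(\x)$. By Lemma \ref{lem:explicit form of Fkm}, both satisfy $y_i - y_1 = y'_i - y'_1 = x_{i-1}^d$ for $i \ge 2$, hence $\y - \y' \in \mathcal{L}$. By Lemma \ref{lem: moduli map is nondegenerate}, $\y \in \Mkm$ and $\y' \in \M_{k',m'} \subseteq \Mkm$, so $\y - \y' \in \mathcal{L} \cap \Mkm$; the proof of Lemma \ref{eq:decomposition of E} already shows this intersection is $\{\0\}$, hence $\y = \y'$.

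The main obstacle I expect is the reverse direction, where one must engineer test vectors flexible enough to expose all three constraints while still respecting $\b x_{k'+m'} = x_{k'}$. The key insight is that the constraint $\b \neq 1$ built into $\mathcal{H}_{k',m'}$ versus $\mathcal{H}_{k,m}$ makes the two hyperspace equations arithmetically incompatible unless $k = k'$ (or $k' = 0$), which is exactly what converts the geometric containment $\M_{k',m'}\subseteq\Mkm$ into the combinatorial conditions defining $\preceq$.
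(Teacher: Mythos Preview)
Your forward direction matches the paper's, and your ``moreover'' argument is actually cleaner than the paper's explicit case analysis: rather than computing the first coordinates of $\Fkm(\x)$ and $F_{k',m'}(\x)$ separately and checking they coincide, you observe directly that $\y-\y'\in\mathcal L\cap\Mkm\subseteq\mathcal L\cap\mathcal H_{k,m}=\{\0\}$.

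The reverse direction has a genuine gap in the case $k'\ge 1$. First, your test vector with ``$x_{k'}=1$, $x_{k'+m'}=1/\b$, other entries zero'' is not in $\M_{k',m'}$ as written: periodicity from index $k'+1$ forces $x_{k'+2m'}=x_{k'+m'}=1/\b$, not $0$. Presumably you mean to extend periodically, setting $x_{k'+jm'}=1/\b$ for all $j\ge 1$. But even with that correction, you only treat $k<k'$ and ``$k>k'$ with $m'\mid(k-k')$''. In the remaining subcase $k>k'$ with $m'\nmid(k-k')$, the corrected vector has $x_k=x_{k+m}=0$ (since $k\not\equiv k'\pmod{m'}$ and $m'\mid m$), so the equation $\b x_{k+m}-x_k=0$ yields no contradiction. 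You need an additional test vector here --- for instance, one supported on the residue class of $k$ modulo $m'$ in the periodic range, which lies in $\M_{k',m'}$ precisely because $k\not\equiv k'\pmod{m'}$, and then $\b x_{k+m}-x_k=\b-1\neq 0$. The paper sidesteps this case split by using a single three-valued test vector $x_{k'}=\b$, $x_{k'+jm'}=1$ for $j\ge 1$, $x_i=2$ otherwise; since $|\b|=1$ and $\b\neq 1$, the only way to satisfy $\b x_{k+m}=x_k$ with values in $\{\b,1,2\}$ is $x_k=\b$ and $x_{k+m}=1$, which forces $k=k'$ and $m'\mid m$ in one stroke.
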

\begin{proof} We first prove that
	\[
	(k,m) \preceq (k',m') \Leftrightarrow \M_{k',m'} \subseteq \Mkm.
	\] Assume that $(k',m') \preceq  (k,m)$. We shall prove that $\M_{k',m'} \subseteq \Mkm$. Assume $\x \in \M_{k',m'}$. Then $\x$ is preperiodic of period less than $k'$ to a cycle of period dividing $m'$. Since $k' \le k$ and $m' \mid m$, we deduce that $\x \in \mathcal{P}_{k,m}$. We need to show that $\b x_{k+m} -x_k =0$. Indeed,
	\begin{itemize}
		\item if $k'=k$, since $m' \mid m$ and $\x \in \M_{k',m'}$, we have $x_{k+m}=x_{k'+m'}$. Thus $\b x_{k+m} - x_k = \b x_{k'+ m'} - x_{k'} = 0$,
		\item if $k'=0$, in that case $m' \mid k$ and $\x$ is periodic of period $m'$. Thus $x_{k+m} =  x_{k} =0$.
	\end{itemize}
Let us now assume that $\M_{k',m'} \subseteq \Mkm$. We claim that $(k',m') \preceq (k,m)$. Indeed,
\begin{itemize}
	\item either $k'=0$; in this case, consider $\x \in \M_{0,m'}$ given by $x_i = 0 $ if $m' \mid i$ and $1$ otherwise. Since $\x \in \Mkm$, $\b x_{k+m} - x_{k} = 0$ with $\b \neq 1$. Then necessarily, $x_{k+m} = x_{k} = 0$ thus $m'$ divides $k$ and $m$.
\item or $k' \ge 1$; in this case, consider $\x \in \M_{k',m'}$ given by $x_{k'} = \b, x_{k'+jm'} = 1$ for $j \ge 1$ and $x_{i} = 2$ otherwise. If $\b x_{k+m}-x_k = 0$ then $x_k = \b $ and $x_{k+m} = 1$. Hence $k = k'$ and $m' \mid m$.
\end{itemize}

We assume now $\M_{k',m'} \subseteq \Mkm$, or equivalently, $(k',m') \preceq (k,m)$. Let us prove that the restriction of $\Fkm$ to ${\M_{k',m'}}$ is $ F_{k',m'}$. Assume $\x \in \M_{k',m'}$. Set $\y = F_{k',m'}(x)$ and $\z= \Fkm (\x)$. According to Lemma \ref{lem:explicit form of Fkm}, for all $i \ge 2$, $y_i = x_{i-1}^d +y_1,z_i = x_{i-1}^d + z_1$ where 
\[	y_1= \left\{\begin{array}{lc}
	-x_{m'-1}^d & \text{ if } k' = 0\\
	-\frac{\b x_{k'+m'-1}^d -x_{k'-1}^d}{\b-1} & \text{ if } k' \ge 1
	\end{array} \right.
\quad  \text{ and } \quad z_1= \left\{\begin{array}{lc}
	-x_{m-1}^d & \text{ if } k = 0\\
	-\frac{\b x_{k+m-1}^d - x_{k'-1}^d}{\b-1} & \text{ if } k \ge 1.
	\end{array} \right.
\]
It is enough to prove that $y_1 = z_1$.

\noindent$\bullet$ Case $k'=0$. In that case, $k$ and $m$ are multiples of $m'$. If $k = 0$ then \[y_1 = -x_{m'-1}^d = -x_{m-1}^d = z_1.\] If $k \neq 0$, $
x_{k+m-1} = x_{k-1} = x_{m'-1}$ hence \[y_1 = -x_{m'-1}^d  = - \frac{\b x_{k+m-1}^d - x_{k-1}^d}{\b-1} =z_1.\]

\noindent$\bullet$ Case $k' \neq 0$. In that case, $k' =k$ and $m$ is a multiple of $m'$. Then 
\[
x_{k+m-1} = \left\{
\begin{array}{cl}
\frac{1}{\b}x_{k'+m'-1} & \text{ if $m'-1=0$ and $m-1 \ge 1$} \\
x_{k'+m'-1} &\text{ otherwise.}
\end{array}
\right.
\]\footnote{Note that if $m' = 1$, $ \x \in \M_{k',m'}$ implies that for all $i \ge 1$, $x_{k'+i} = x_{k'+1}$. In particular, with $k = k'$, $x_{k+m-1} = x_{k'+1} = \frac{1}{\b} x_{k'}$ }
Then 
\[y_1 = - \frac{\b x_{k'+m'-1}^d-x_{k'-1}^d }{\b-1}  = - \frac{\b x_{k+m-1}^d -x_{k-1}^d}{\b-1} =z_1. \qedhere\]
\end{proof}

\subsection{The post-critical set of $\Fkm$}
In this section, we fix a pair of integers $(k,m) \in \N\times \N^*$. Recall that \[
C(F_{k,m})\coloneqq \text{the critical set of }  F_{k,m} \colon \M_{k,m} \to \M_{k,m},\]\[CV(F_{k,m})\coloneqq  \text{ the critical value set of } F_{k,m}\colon\M_{k,m} \to \M_{k,m} 
\]
and 
\[
PC(F_{k,m}) \coloneqq \text{ the post-critical set of } F_{k,m} \colon \M_{k,m} \to \M_{k,m}.
\]
%
%

\begin{lemma}\label{lem: critical of Fkm}
We have that $C(F_{k,m})= \{ \x \in \M_{k,m} \mid x_i = 0 \text{ for some $ 1 \le i \le k +m-1$} \}$
\end{lemma}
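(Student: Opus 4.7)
The plan is to compute $\ker D_{\z} F_{k,m}$ directly from the decomposition $F_{k,m} = \pi_{k,m} \circ \mathcal{Q}$. Since $F_{k,m}\colon\M_{k,m}\to\M_{k,m}$ is an endomorphism of a finite-dimensional vector space, a point $\z$ is critical iff this kernel is non-trivial, so I need to show this kernel is non-trivial precisely when some $z_i$ vanishes with $1\le i\le k+m-1$.

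The first step is an application of the chain rule. Since $\pi_{k,m}$ is linear, one has
\[
D_{\z} F_{k,m}(\mathbf{v}) \;=\; \pi_{k,m}\bigl(D_{\z}\mathcal{Q}(\mathbf{v})\bigr)
\]
for every $\mathbf{v}\in\M_{k,m}$, with $D_{\z}\mathcal{Q}(\mathbf{v}) = (0,\,dz_1^{d-1}v_1,\,dz_2^{d-1}v_2,\,\ldots)$. Since $\ker\pi_{k,m}=\mathcal{L}$, the vector $\mathbf{v}$ lies in $\ker D_{\z} F_{k,m}$ iff $D_{\z}\mathcal{Q}(\mathbf{v})$ is a constant sequence; and the vanishing of its first entry forces that constant to be $0$. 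Hence the kernel condition becomes $z_i^{d-1}v_i = 0$ for every $i\ge 1$, i.e.\ $v_i=0$ whenever $z_i\neq 0$.

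The second step is to use the parametrization of $\M_{k,m}$ by $(v_1,\ldots,v_{k+m-1})$ to reduce this infinite list of conditions to conditions on the first $k+m-1$ coordinates. For $i\ge k+m$, both $\z$ and $\mathbf{v}$ satisfy the defining relations of $\M_{k,m}$, namely $\beta x_{k+m}=x_k$ (with $x_0=0$) and $x_{i+m}=x_i$ for $i\ge k+1$, so using also $\beta^d=1$, the conditions at such $i$ are implied by those at $i\le k+m-1$; the case $k=0$ is taken care of by $z_m=v_m=0$. It follows that $\mathbf{v}\in\ker D_{\z} F_{k,m}$ iff $v_i=0$ for every $i\in\{1,\ldots,k+m-1\}$ with $z_i\neq 0$. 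This kernel is non-trivial iff there exists $i_0\in\{1,\ldots,k+m-1\}$ with $z_{i_0}=0$, in which case the vector with $v_{i_0}=1$ and all other coordinates zero among $v_1,\ldots,v_{k+m-1}$ is a non-trivial kernel element.

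The main technical point is the bookkeeping for indices $i\ge k+m$: one must track how the defining relations of $\M_{k,m}$ propagate the finite conditions to the infinite tail, and in particular verify that $\beta^d=1$ makes the condition at $i=k+m$ redundant given the condition at $i=k$. Once this is checked, the lemma follows at once.
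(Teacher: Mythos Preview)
Your proposal is correct and follows essentially the same approach as the paper: compute $D_{\z}F_{k,m}=\pi_{k,m}\circ D_{\z}\mathcal{Q}$, observe that $\ker\pi_{k,m}=\mathcal{L}$ forces $z_i^{d-1}v_i=0$ for all $i$, and then use the defining relations of $\M_{k,m}$ (periodicity and $\beta x_{k+m}=x_k$ together with $\beta^d=1$) to reduce to the first $k+m-1$ coordinates. The only cosmetic difference is in the explicit kernel vector for the converse: the paper builds a vector supported on \emph{all} zeros of $\x$ (placing $\beta$ at position $k$ so that $v_{k+m}=1$), whereas you place a single $1$ at one vanishing coordinate $i_0$ and extend; both constructions are valid elements of $\M_{k,m}$ and lie in the kernel.
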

\begin{proof}
	Recall that $F_{k,m} = \p_{k,m} \circ \mathcal{Q}$. Differentiating both sides, we see that for any $\x \in \M_{k,m}$ and for any $\boldsymbol{v} \in T_{\x} \mathcal{E} = \mathcal{E}$,
	\[
	D_{\x} F_{k,m} (\boldsymbol{v}) = \pi_{k,m} \circ D_{\x} \mathcal{Q} (\boldsymbol{v}) = \pi_{k,m} (0,dx_1^{d-1}v_1,dx_2^{d-1}v_2,\ldots) 
	\]
On the one hand, assume $\x \in C(F_{k,m})$. Then there exists $\boldsymbol{v} \in T_{\x} \M_{k,m} \setminus \{\textbf{0}\}$ such that  $x_i^{d-1}v_i = 0$ for all $i \ge 1$. Observe that there exists $i \in  \{1,\ldots, k+m-1\} $ such that $ v_i \neq 0$ whence $x_i = 0$. Indeed otherwise, $v_{k+m} = \frac{1}{\b} v_{k} =0$ and by preperiodicity, $v_i =0$ for all $i \ge 1$. 

On the other hand, given $\x \in \mathcal{E}$, define $\boldsymbol{v} \in \mathcal{E}$ by 
\[
v_j = \left\{
\begin{array}{cl}
\b & \text{ if $x_j = 0$ and $j= k$}\\
1 & \text{ if $x_j = 0$ and $j \neq k$}\\
0 & \text{ if $x_j \neq 0$}
\end{array}
\right.
\]
Then $x_j^{d-1} v_j= 0$ for all $j \ge 1$ so that $D_{\x} F_{k,m} (\boldsymbol{v}) = \textbf{0}$. Moreover, if $\x \in \M_{k,m}$ then $\boldsymbol{v} \in \M_{k,m}$. Finally, if there exists $i \in \{ 1,\ldots, k+m-1 \}$ such that $x_i = 0$ then $\boldsymbol{v} \neq \textbf{0}$ whence $\x \in C(F_{k,m})$. 
\end{proof}

\begin{definition}
Denote by \[
\Delta_{k,m} = \{
\x \in \M_{k,m} \mid \text{ there exists $1 \le i < j\le k+m$ such that $x_i = x_j$}
\}
\]
\end{definition}
The set $\Delta_{k,m}$ consists of $\binom{k+m}{2}$ hyperplanes.

\begin{proposition}\label{prop: critical values} We have that $CV(F_{k,m}) \subseteq \Delta_{k,m}$ and $F_{k,m}(\Delta_{k,m}) \subseteq \Delta_{k,m}$. Consequently, ${PC(F_{k,m}) \subseteq \Delta_{k,m}}$.
\end{proposition}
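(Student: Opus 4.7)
The plan is to prove the two inclusions $CV(F_{k,m})\subseteq\Delta_{k,m}$ and $F_{k,m}(\Delta_{k,m})\subseteq\Delta_{k,m}$ as separate steps; the conclusion $PC(F_{k,m})\subseteq\Delta_{k,m}$ will then follow by an immediate induction, since $PC(F_{k,m})=\bigcup_{j\ge 1}F_{k,m}^{\circ j}(C(F_{k,m}))=\bigcup_{n\ge 0}F_{k,m}^{\circ n}(CV(F_{k,m}))$, and $\Delta_{k,m}$ is then forward-invariant and already contains $CV(F_{k,m})$.

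For the first inclusion I would take $\x\in C(F_{k,m})$ and apply Lemma \ref{lem: critical of Fkm} to produce an index $i\in\{1,\ldots,k+m-1\}$ with $x_i=0$. The explicit formula of Lemma \ref{lem:explicit form of Fkm} then yields $y_{i+1}=x_i^{d}+y_1=y_1$ for $\y=F_{k,m}(\x)$, with $2\le i+1\le k+m$, exhibiting two equal entries of $\y$ and placing it in $\Delta_{k,m}$.

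For the forward invariance I would fix $\x\in\Delta_{k,m}$ with $x_i=x_j$, $1\le i<j\le k+m$, and set $\y=F_{k,m}(\x)$. When $j\le k+m-1$ the formula for $\y$ immediately gives $y_{i+1}=x_i^{d}+y_1=x_j^{d}+y_1=y_{j+1}$ with $2\le i+1<j+1\le k+m$, and we are done. The delicate case is the boundary $j=k+m$. If $k=0$, then $\x\in \mathcal{H}_{0,m}$ forces $x_m=0$, hence $x_i=0$, and $\x$ is critical so we reduce to the previous step. If $k\ge 1$, I use the defining relation $\beta x_{k+m}=x_k$ together with $\beta^{d}=1$ to derive $x_k^{d}=x_{k+m}^{d}=x_i^{d}$; when $i\ne k$ the identity $y_{i+1}=x_i^{d}+y_1=x_k^{d}+y_1=y_{k+1}$ supplies the required collision (with $i+1\ne k+1$ both in $\{2,\ldots,k+m\}$), while when $i=k$ combining $x_k=x_{k+m}$ with $\beta x_{k+m}=x_k$ and $\beta\ne 1$ forces $x_k=0$, so $\x$ is critical and we again reduce to the first step.

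The main obstacle I anticipate is this boundary case $j=k+m$, where one must exploit both halves of the hypothesis on $\beta$: $\beta^{d}=1$ to transfer the equality $x_i=x_{k+m}$ to $x_i^{d}=x_k^{d}$ across the preperiodic boundary, and $\beta\ne 1$ to collapse the diagonal subcase $i=k$ to a vanishing coordinate. Once this case analysis is settled, the induction $F_{k,m}^{\circ n}(CV(F_{k,m}))\subseteq\Delta_{k,m}$ for every $n\ge 0$ is one line and yields the asserted $PC(F_{k,m})\subseteq\Delta_{k,m}$.
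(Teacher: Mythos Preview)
Your proposal is correct and follows essentially the same route as the paper: the same use of Lemma~\ref{lem: critical of Fkm} and Lemma~\ref{lem:explicit form of Fkm} for the inclusion $CV(F_{k,m})\subseteq\Delta_{k,m}$, and the same case analysis ($j\le k+m-1$ versus $j=k+m$, then $i=k$ versus $i\ne k$) for the forward invariance. The only cosmetic difference is that you isolate the subcase $k=0$ explicitly, whereas the paper absorbs it into the $i\ne k$ branch; the underlying argument is identical.
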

\begin{proof}

Let $ \x \in C(F_{k,m})$ and set $\y = F_{k,m}(\x)$. Then by Lemma \ref{lem: critical of Fkm}, there exists $i \in  \{1,\ldots , k+m-1\} $ such that $x_i = 0$. By Lemma \ref{lem:explicit form of Fkm}, we have
\[
y_{i+1} = x_{i}^d + y_1 = y_1
\]
Thus $\y \in \Delta_{k,m}$, whence $CV(F_{k,m}) \subseteq \Delta_{k,m}$.

Now we prove that $\Delta_{k,m}$ is invariant under $F_{k,m}$. Assume $\x \in \Delta_{k,m}$ and set $\y = F_{k,m}(\x)$. Then there exist $1 \le i < j \le k+m$ such that $x_i = x_j$. By Lemma \ref{lem:explicit form of Fkm}, for every $l \ge 2, y_l = x_{l-1}^d +y_1$. Note that since $\x\in \M_{k,m}$, $\b x_{k+m} -x_k  =0$ with the convention $x_0 \coloneqq0$.
\begin{itemize}
	\item If $j \le k+m-1$, we have $y_{i+1} = x_{i}^d + y_1 = x_{j}^d +y_1 = y_{j+1}$.
	\item If $j = k+m$, then
	\begin{itemize}
	\item either $i = k $ so that $x_k = x_ i = x_j = x_{k+m}$; since $\b  x_{k+m} -x_k = 0$ and $\b \neq 1$, $x_k =0$ whence $\x \in C(F_{k,m})$ and $\y \in \Delta_{k,m}$;
	\item or $i \neq k$ so that $i+1 \neq k+1$; since $x_{k} = \b x_{k+m} = \b x_{i},$ we have
	\[
	y_{i+1} = x_{i}^d + y_1 = x_{k}^d + y_1 = y_{k+1}.
	\]
	\end{itemize}

\end{itemize}
Hence, in any case, we have $\y \in \Delta_{k,m}$, i.e. $F_{k,m}(\Delta_{k,m}) \subset \Delta_{k,m}$ and the lemma is proved. 
\end{proof}

\section{Fixed points of Koch maps}
In this section, we shall study the eigenvalues of the derivative of $F_{k,m}$ at its fixed points and we will prove Theorem \ref{thm: Koch prime}. Then, we deduce Theorem \ref{thm:theoremA'} by using Theorem \ref{thm: Koch prime}.
\subsection{Relation with post-critically finite polynomials}
There is a close connection between fixed points $\Fkm$ and post-critically finite polynomials. More precisely, we will consider monic centered unicritical polynomials of degree $d \ge 2$,\[P(t) = t^d +c \in \C[t], c\in \C\]
The critical orbit of such a polynomial is the sequence $\boldsymbol{c}_P \in \mathcal{E}$ defined by 
\[
\boldsymbol{c}_P= (c_i)_{i \ge 1} \in \mathcal{E} \text{ where } c_i = P^{\circ i}(0).
\] Since the preperiod and the period of a preperiodic sequence will be extensively discussed in this chapter, we introduce the following notions. 
\begin{definition}
	Given integers $k \ge 0, m \ge 1$, a sequence $\x \in \mathcal{E}$ is called \textit{preperiodic of type $(k,m)$} if for every $i \ge k+1$, $x_{i+m} = x_i$, \textit{preperiodic of \textbf{exact} type $(k,m)$} if, additionally, $k$ and $m$ are the smallest integers satisfying such conditions. 
\end{definition}
For a sequence of exact type $(k,m)$, the pair $(k,m)$ consists of the preperiod $k$ and the period $m$. The vector space $\mathcal{P}_{k,m}$ is the space of preperiodic sequences of type $(k,m)$.
\begin{definition}
	A {\em degree $d$ polynomial of (exact) type $(k,m)$} is a monic centered unicritical polynomial $P$ of degree $d \ge 2$ whose critical orbit ${\boldsymbol c}_P$ is of (exact) type $(k,m)$. 
\end{definition}
In other words, a polynomial is of type $(k,m)$ if and only if its critical orbit belongs to $\mathcal{P}_{k,m}$. Note that a polynomial of type $(k,m)$ is post-critically finite. 
\begin{remark}\label{rm: type of pcf}
	Let $P$ be a polynomial of type $(k,m)$ of degree $d$. If $k=0$, then the critical value $c$ of $P$ is a periodic point of period dividing $m$, i.e. $P^{\circ m}(c) = c$. In other words, $P^{\circ (m-1)}(c) \in P^{-1}(c)$. However, since $P$ is a unicritical polynomial, $P^{-1}(c)$ consists of exactly one point which is the critical point of $P$. This means that $P^{\circ (m-1)}(c)$ is in fact the critical point of $P$. This is the case if and only if the critical point of $P$ is also a periodic point of type $(0,m)$.
\end{remark}
\begin{proposition}\label{prop: critical orbit}Given $(k,m) \in \N \times \N^*$. Let $\z\in \M_{k,m}$ be a fixed point of $F_{k,m}$. Set $P(t) = t^d +z_1$. Then $\z = \boldsymbol{c}_P$ and $P$ is of exact type $(k',m')$. Moreover, $(k',m') \preceq (k,m)$.
\end{proposition}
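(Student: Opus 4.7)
The plan is two-staged: first, identify $\z$ with the critical orbit of $P$; second, use the defining equation of $\mathcal{H}_{k,m}$ to pin down the exact type of that orbit.

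For the first stage, I would invoke Lemma \ref{lem:explicit form of Fkm}: the fixed-point equation $\Fkm(\z)=\z$ forces $z_i = z_{i-1}^d + z_1 = P(z_{i-1})$ for every $i \ge 2$. Since $P(0) = z_1$, an immediate induction yields $z_i = P^{\circ i}(0) = c_i$ for all $i \ge 1$, so $\z = \boldsymbol{c}_P$. The inclusion $\z \in \Mkm \subseteq \mathcal{P}_{k,m}$ then says that $\boldsymbol{c}_P$ is preperiodic of type $(k,m)$, so $P$ is post-critically finite of some exact type $(k',m')$, and minimality of this type automatically gives $k' \le k$ and $m' \mid m$.

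For the second stage, I must promote these inequalities to the relation $(k',m') \preceq (k,m)$. If $k' = k$ the partial order condition is immediate. Otherwise $k' < k$, and here is the key observation: iterating the periodicity $z_{i+m'} = z_i$ (valid for $i \ge k'+1$) a total of $m/m'$ times and applying it at $i = k \ge k'+1$ yields $z_{k+m} = z_k$. Combining this with the defining relation $\beta z_{k+m} = z_k$ of $\mathcal{H}_{k,m}$ and with $\beta \neq 1$ forces $z_k = 0$, hence $P^{\circ k}(0) = 0$. Since $P$ is unicritical with unique critical point $0$, the point $0$ is then periodic of some period dividing $k$; in particular $c_{k+i} = c_i$ for all $i \ge 1$, forcing the exact preperiod to vanish ($k'=0$) and the exact period to divide $k$ ($m' \mid k$), as required.

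The main subtlety is spotting the cancellation in the previous paragraph: one must notice that $k' < k$ propagates the periodicity \emph{past} the threshold index $k$ (so that $z_{k+m} = z_k$ comes for free), and then it is the interplay between this equality and the $\mathcal{H}_{k,m}$-equation $\beta z_{k+m} = z_k$ with $\beta \neq 1$ that collapses $z_k$ to $0$. Once this is seen, unicriticality of $P$ immediately upgrades the single zero $c_k = 0$ to full periodicity of the orbit from the critical point, and the partial order condition drops out with no further work.
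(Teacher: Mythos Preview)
Your proof is correct and follows essentially the same route as the paper's: invoke Lemma~\ref{lem:explicit form of Fkm} to identify $\z$ with the critical orbit of $P$, then in the case $k'<k$ combine the periodicity $z_{k+m}=z_k$ with the $\mathcal{H}_{k,m}$-equation and $\beta\neq 1$ to force $z_k=P^{\circ k}(0)=0$, from which $k'=0$ and $m'\mid k$ follow. One harmless remark: the appeal to unicriticality in your last paragraph is not actually needed for your argument, since $P^{\circ k}(0)=0$ alone already gives that $0$ is periodic with period dividing $k$, and hence that $\boldsymbol{c}_P$ is purely periodic with $k$ as a period, so $k'=0$ and $m'\mid k$.
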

\begin{proof}

First, let us prove that $P$ is of type $(k,m)$. According to Lemma \ref{lem:explicit form of Fkm}, for every $i \ge 2$, we have that $z_i = z_{i-1}^d+z_1$ hence
\[
z_i = P(z_{i-1}).
\] In other words, $\z$ is the sequence of iterates of $z_1$ under $P$. Recall that by Lemma \ref{lem: largest invariant}, $\M_{k,m}$ contains every fixed point of $F_{k,m}$ hence $\z \in \M_{k,m} \subset \mathcal{P}_{k,m}$. Therefore, $\z = \boldsymbol{c}_P$ and the polynomial $P$ is a polynomial of type $(k,m)$. 

Second, let $(k',m')$ be the exact type of $P$. We prove that $(k',m') \preceq (k,m)$, i.e.
\[
\left\{
\begin{array}{l}
m' \mid m\\
\text{either } k' = k \text{ or } (k' = 0 \text{ and } m' \mid k)
\end{array}
\right.
\] 
Since $(k',m')$ is the exact type of the orbit of $z_1$, $k' \le k$ and $m' \mid m$. If $k' = k$, we are done. If $k' \neq k$, we need to prove that $k'=0$ and $m' \mid k$. Since $(k',m')$ is the exact type of $\z$, we have $\z \in \mathcal{P}_{k',m'}$. Thus, $k'+1 \le k$ and $m' \mid m$; and since $\z \in \mathcal{P}_{k',m'}$, this implies that $z_{k+m} =z_k$. Moreover, $\z \in \M_{k,m}$ implies that $\b z_{k+m} - z_{k}=0$. Therefore, $P^{\circ k}(0) =z_k =0.$ In other words, $0$ is a periodic point of $P$, i.e. $k'=0$, and the period of $0$ is $m'$. Moreover, $P^{\circ k}(0)  = 0$ also implies that $k$ is a multiple of $m'$. Thus, we can conclude that $(k',m') \preceq (k,m)$. 
\end{proof}

\begin{remark}
The converse statement of Proposition \ref{prop: critical orbit} is true under some assumptions on the choice of the root of unity $\beta$. More precisely, given a post-critically finite unicritical polynomial $P(t) = t^d + z_1$ of type $(k,m)$, there exists a $d$-th root of unity $\beta' \neq 1$ such that the critical orbit $\boldsymbol{c}_P$ of $P$ is a fixed point of $F_{k,m}$. 
\end{remark}

The partial order $\preceq$ enables us to study the relative positions of the fixed points of $F_{k,m}$ and $\Delta_{k,m}$.
\begin{lemma}\label{lem: position of fixed points}
Let $\z$ be a fixed point of $F_{k,m}$ and let $(k',m')$ be the exact type of $\z$. Then, $\z \in \Delta_{k,m}$ if and only if $(k',m') \prec (k,m)$.
\end{lemma}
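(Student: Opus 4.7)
The plan is to invoke Proposition \ref{prop: critical orbit}, which already yields $(k',m') \preceq (k,m)$ and identifies $\z$ with the critical orbit $(P^{\circ j}(0))_{j \ge 1}$ of $P(t) = t^d + z_1$, a post-critically finite polynomial of exact type $(k',m')$. The lemma thus reduces to the equivalence $\z \in \Delta_{k,m} \iff (k',m') \neq (k,m)$.

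For the direction $(k',m') \prec (k,m) \Rightarrow \z \in \Delta_{k,m}$, I would exhibit an explicit witnessing pair $(i,j)$ by splitting along Definition \ref{def: partial order}. If $k' = k$ with $m' \mid m$ and $m' < m$, then periodicity of $\z$ past index $k+1$ gives $z_{k+1} = z_{k+1+m'}$, and $k+1+m' \le k+m$. If instead $k' = 0$, $m' \mid k$, and $k \ge 1$, then $\z$ is purely periodic of period $m'$; choosing $(i,j) = (m', 2m')$ works because $m' \le k$ and $m' \le m$ give $2m' \le k+m$.

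For the converse, assume $\z$ has exact type $(k,m)$ and suppose by contradiction that $z_i = z_j$ for some $1 \le i < j \le k+m$. Setting $p = j - i$, iterating $P$ on $z_i = z_j$ yields $z_{n+p} = z_n$ for every $n \ge i$; combined with the exact period $m$ of the tail $(z_n)_{n \ge k+1}$, the set of periods of $(z_n)_{n \ge \max(i,k+1)}$ is an additive subsemigroup containing both $p$ and $m$, so minimality of $m$ forces $m \mid p$. Writing $p = \ell m$ with $\ell \ge 1$, the bound $j \le k+m$ translates to $i \le k - (\ell - 1)m$. For $\ell = 1$, this gives $1 \le i \le k$ (hence $k \ge 1$), and propagating $z_i = z_{i+m}$ under $P$ gives $z_{n+m} = z_n$ for all $n \ge i$, producing either an exact preperiod $\le i - 1 < k$ (if $i < k$) or the equality $z_k = z_{k+m}$ (if $i = k$); both contradict exact type $(k,m)$, using that $k \ge 1$ forces $z_k \ne z_{k+m}$. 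For $\ell \ge 2$, one gets $k \ge m$, and applying $P^{k-i}$ yields $z_k = z_{k+\ell m}$, which by periodicity past index $k+1$ reduces to $z_{k+m}$, again contradicting $z_k \ne z_{k+m}$.

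The main subtlety lies in the paper's convention for ``preperiod''---the condition $z_{n+m} = z_n$ is required only for $n \ge k+1$, not for $n \ge k$---which makes the $i = k$ and $i < k$ situations in the $\ell = 1$ sub-case qualitatively different and forces the separate observation that exact type $(k,m)$ with $k \ge 1$ implies $z_k \ne z_{k+m}$. Once this bookkeeping is pinned down, the rest is routine propagation of equalities along the critical orbit of $P$.
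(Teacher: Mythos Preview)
Your proof is correct. The backward direction matches the paper's almost exactly (the paper uses the single witness $z_{k'+1}=z_{k'+m'+1}$ in both sub-cases, while you pick $(m',2m')$ when $k'=0$, but this is cosmetic).

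For the forward direction, however, the paper's argument is much shorter than your contradiction route. Once you observe that $z_i=z_j$ forces $(z_n)_{n\ge i}$ to be periodic of period $j-i$, you immediately get $k'\le i-1$ and $m'\mid j-i$. Then a two-line dichotomy finishes it: if $i\le k$ then $k'<k$; if $i\ge k+1$ then $j-i\le k+m-(k+1)<m$, so $m'<m$. Either way $(k',m')\neq(k,m)$. Your detour through proving $m\mid p$, writing $p=\ell m$, and splitting on $\ell=1$ versus $\ell\ge 2$ is valid but does extra work to recover what the direct bound $k'\le i-1$ gives for free. (Also, calling the set of periods an ``additive subsemigroup'' is slightly imprecise for the conclusion you draw; you are really using that for an infinite periodic sequence the periods are closed under $\gcd$, which is what forces the minimal period $m$ to divide $p$.)
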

\begin{proof} 

Assume $\z \in \Delta_{k,m}$, i.e. there exists $1\le i<j \le k+m$ such that $z_i = z_j$. In particular, $\z$ is a preperiodic sequence of preperiod at most $i-1$ and of period dividing $j-i$. Whence, since $(k',m')$ is the exact type of $\z$, we have $k' \le i-1$ and $m'$ divides $j-i$. 
\begin{itemize}
	\item If $i \le k$ then $k' \le i-1 < k$.
	\item If $i \ge k+1$ then $j-i \le k+m -(k+1) < m$. Since $m' \mid  j-i$, we have $m' < m$.
\end{itemize}
In both cases, we have $(k',m') \neq (k,m)$.  Note that, according to Proposition \ref{prop: critical orbit}, $(k',m) \preceq (k,m)$. Hence $(k',m') \prec (k,m)$.

Conversely, assume $(k',m') \prec (k,m)$. In particular, $k' \le k$, $m' \le m$ and $(k',m') \neq (k,m)$. Note that $\z$ is of exact type $(k',m')$. Hence, \[z_{k'+1} = z_{k'+m'+1}.\]If $k' \neq k$ then $k' < k$. Whence $k'+1$ and $k'+m'+1$ are integers in $ \{1, \ldots , k+m \}$. If $k' = k$ then $m' < m$. In this case, $k+1,k+m'+1$ are also in $\{ 1,\ldots,k+m\} $. Therefore, in both cases, we deduce by that $\z \in \Delta_{k,m}$.
\end{proof}
\subsection{Eigenvalues of moduli maps at fixed points}
In order to study the eigenvalues of the derivative of moduli maps at one of its fixed point, we will in fact study its transpose. Note that when $k+m =1$, $\M_{k,m} = \{\textbf{0}\}$ and $F_{k,m}$ is trivial. Let us fix $(k,m) \in \N \times \N^*$ such that $k+m \ge 2$. Assume that $\z \in \M_{k,m}$ is a fixed point of $F_{k,m}$. We will describe the transpose of the derivative $D_{\z} F_{k,m}\colon T_{\z}\M_{k,m} \to T_{\z}\M_{k,m}$. Since $\M_{k,m}$ is a vector space, there is a canonical identification of $T_{\z} \M_{k,m}$ with $\M_{k,m}$,  the derivative $D_{\z} F_{k,m}\colon T_{\z} \M_{k,m}\to T_{\z} \M_{k,m}$ identifies with a linear map \[L:\M_{k,m} \to \M_{k,m},\] 
and the transpose identifies with the pull-back map of $L$
\[
L^*\colon \M_{k,m}^* \to \M_{k,m}^*.
\]
\subsubsection{The dual space $\M_{k,m}^*$}
For $i \ge 1$, let $\omega_i \in \M_{k,m}^*$ be the linear form defined by for all $\boldsymbol{v} \in \M_{k,m}$,
\[
 \omega_i(\boldsymbol{v}) \coloneqq  v_i.
\]
\begin{lemma}\label{lem: basis of E^*}The family $\{ 
	\omega_i, 1\le i \le k+m-1	\}$ is a basis of $\M_{k,m}^*$.
\end{lemma}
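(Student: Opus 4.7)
The plan is to show that the evaluation-on-the-first-$(k+m-1)$-coordinates map
\[
\operatorname{ev}\colon \M_{k,m}\to \C^{k+m-1}, \qquad \boldsymbol{v}\mapsto (v_1,\ldots,v_{k+m-1}),
\]
is a linear isomorphism. Since $\omega_i = \pi_i\circ \operatorname{ev}$, where $\pi_i$ is the $i$-th coordinate projection on $\C^{k+m-1}$, and $\{\pi_1,\ldots,\pi_{k+m-1}\}$ is manifestly a basis of $(\C^{k+m-1})^*$, this will imply that $\{\omega_1,\ldots,\omega_{k+m-1}\}$ is a basis of $\M_{k,m}^*$.

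First I would verify injectivity of $\operatorname{ev}$. Suppose $\boldsymbol{v}\in\M_{k,m}$ with $v_1=\cdots=v_{k+m-1}=0$. Since $\boldsymbol{v}\in\mathcal{H}_{k,m}$, the relation $\beta v_{k+m}-v_k=0$ (with the convention $v_0=0$) and $v_k=0$ (note $v_0=0$ if $k=0$) force $v_{k+m}=0$ because $\beta\neq 0$. Then the preperiodicity relation $v_{i+m}=v_i$ for $i\ge k+1$ propagates this to give $v_i=0$ for all $i\ge 1$, so $\boldsymbol{v}=\boldsymbol{0}$.

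Next I would check surjectivity by an explicit construction: given $(a_1,\ldots,a_{k+m-1})\in\C^{k+m-1}$, set $v_i\coloneqq a_i$ for $1\le i\le k+m-1$, set $v_0\coloneqq 0$, define $v_{k+m}\coloneqq v_k/\beta$, and extend by $v_{i+m}\coloneqq v_i$ for all $i\ge k+1$. Then by construction $\boldsymbol{v}$ lies in $\mathcal{P}_{k,m}$ and satisfies $\beta v_{k+m}-v_k=0$, so $\boldsymbol{v}\in\mathcal{H}_{k,m}\cap\mathcal{P}_{k,m}=\M_{k,m}$ and $\operatorname{ev}(\boldsymbol{v})=(a_1,\ldots,a_{k+m-1})$. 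Alternatively, since $\dim\M_{k,m}=k+m-1$ was already established (it was noted that $\M_{k,m}$ is a vector space of dimension $k+m-1$), surjectivity follows at once from injectivity by dimension count; I would include the explicit construction anyway because the dual basis it exhibits is useful for later arguments.

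No step is really an obstacle: the content is just bookkeeping about the two defining relations of $\M_{k,m}$. The only mild subtlety is the $k=0$ case, where the convention $v_0=0$ reads the relation $\beta v_{k+m}-v_k=0$ as $\beta v_m=0$, i.e.\ $v_m=0$, which is consistent with the direct description $\mathcal{H}_{0,m}=\{\boldsymbol{v}\mid v_m=0\}$ given earlier; this is worth flagging explicitly so the reader sees that the unified argument covers both $k=0$ and $k\ge 1$.
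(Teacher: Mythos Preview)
Your proof is correct. Both your argument and the paper's hinge on the same underlying fact---that an element of $\M_{k,m}$ is uniquely and freely determined by its first $k+m-1$ entries---but you package it differently. The paper uses the known dimension $\dim\M_{k,m}=k+m-1$ and then verifies linear independence of the $\omega_i$ directly: for each index $i$ it constructs (in three cases, according to whether $i<k$, $i=k$, or $i>k$) a vector $\boldsymbol{v}\in\M_{k,m}$ with $\omega_j(\boldsymbol{v})=\delta_{ij}$ for $1\le j\le k+m-1$, so that $\sum\lambda_j\omega_j=0$ forces each $\lambda_i=0$. You instead show in one stroke that the evaluation map $\operatorname{ev}\colon\M_{k,m}\to\C^{k+m-1}$ is a linear isomorphism and pull back the standard dual basis. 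Your route avoids the case split and makes the isomorphism $\M_{k,m}\cong\C^{k+m-1}$ explicit (indeed this is precisely the map $i_{k,m}$ used later in Lemma~\ref{prop: FKm and GKm}); the paper's route, on the other hand, produces the dual basis vectors in $\M_{k,m}$ concretely, which can be handy if one wants to compute with them. Either way the content is the same bookkeeping.
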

\begin{proof}Note that $\dim \M_{k,m} = k+m-1$ hence it is enough to prove that $\{\omega_1,\ldots,\omega_{k+m-1} \}$ are linearly independent. Assume that
	\[
\sum\limits_{1\le i \le k+m-1}\lambda_ i \omega_ i = 0 \text{ with $\lambda_i \in \C$}.
	\]
Let $ i \ge 1$.  To prove that $\lambda_i = 0$, consider the vector $\boldsymbol{v} \in \M_{k,m}$ defined by
\begin{itemize}
	\item if $i< k$, $v_j = \begin{cases}1 &\text{ if $j=i$}\\
	0& \text{ otherwise},
	\end{cases}$
	\item if $i = k$, $v_j = \begin{cases}
	1 &\text{ if $j = i =k$}\\
	\frac{1}{\b}&\text{ if $j > k$ and $j \equiv k \mod{m}$}\\
	0 & \text{ otherwise},
	\end{cases}$
	\item if $i > k$, $v_j = \begin{cases}
	1 &\text{ if $j \ge i$ and $j\equiv i \mod{m}$}\\
	0& \text{ otherwise.}
	\end{cases}$
\end{itemize}
In any case, we have

\[0 = \sum\limits_{1\le i \le k+m-1}\lambda_ i \omega_ i (\boldsymbol{v}) = \lambda_i v_i = \lambda_i.\qedhere
	\]
\end{proof}
\subsubsection{The transpose of the derivative}

\noindent Observe that $L^*\colon \M_{k,m}^*\to \M_{k,m}^*$ is the pull-back of forms, i.e. for all $\omega \in \M_{k,m}^*$,  \[L^*\omega = \omega \circ L.\]
For all $i \ge 1$, set
\[
\delta_{i} = dz_{i}^{d-1}\]  
where $\z = (z_1,z_2,\ldots) \in \M_{k,m}$ is the considered fixed point of $F_{k,m}$.
\begin{lemma}\label{lem: pull-back of L}
	We have that 
		\[
		L^*\omega_1 = \left\{
		\begin{array}{ll}
		-\delta_{m-1} \omega_{m-1} &\text{ if $k = 0$}\\
		-\frac{\b \delta_{k+m-1} \omega_{k+m-1} -\delta_{k-1} \omega_{k-1}}{\b-1} &\text{ otherwise,}
		\end{array}
		\right.
		\]
and for all $i \ge 2$, \[
		L^*\omega_i = \delta_{i-1} \omega_{i-1} +L^*\omega_1.
		\]
\end{lemma}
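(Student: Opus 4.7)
The plan is to unfold the definition $L^*\omega = \omega \circ L$ and combine it with the explicit expression for $L = D_{\boldsymbol{z}} F_{k,m}$ obtained by differentiating Lemma~\ref{lem:explicit form of Fkm}. The first step is to differentiate the characterization of $F_{k,m}$ coordinate by coordinate: for any $\boldsymbol{v} \in \M_{k,m}$, setting $\boldsymbol{w} \coloneqq L\boldsymbol{v}$, the linearization at $\boldsymbol{x} = \boldsymbol{z}$ of the relations $y_i = x_{i-1}^d + y_1$ and of the explicit formula for $y_1$ in Lemma~\ref{lem:explicit form of Fkm} yields directly
\[
w_i = \delta_{i-1}\, v_{i-1} + w_1 \quad (i \ge 2), \qquad w_1 = \begin{cases} -\delta_{m-1}\, v_{m-1} & \text{if } k = 0, \\ -\dfrac{\beta \delta_{k+m-1}\, v_{k+m-1} - \delta_{k-1}\, v_{k-1}}{\beta-1} & \text{if } k \ge 1, \end{cases}
\]
where, for $k=1$, the convention $z_0 = 0$ makes $\delta_0 = 0$ so that the $\delta_{k-1} v_{k-1}$ term disappears unambiguously.

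The second step is a one-line dualization. Since $\omega_j(\boldsymbol{v}) = v_j$ for all $j \ge 1$, applying the definition $(L^*\omega_j)(\boldsymbol{v}) = \omega_j(L\boldsymbol{v}) = w_j$ to the expressions above gives precisely $L^*\omega_1 = -\delta_{m-1}\omega_{m-1}$ in the case $k=0$, the analogous identity $L^*\omega_1 = -\frac{\beta\delta_{k+m-1}\omega_{k+m-1} - \delta_{k-1}\omega_{k-1}}{\beta-1}$ when $k \ge 1$, and, for $i \ge 2$, the recursive identity $L^*\omega_i = \delta_{i-1}\omega_{i-1} + L^*\omega_1$.

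There is no substantial obstacle here: the statement is essentially bookkeeping, rewriting the derivative of $F_{k,m}$ in dual coordinates. The only point requiring a moment of care is that the index $k-1$ equals $0$ when $k=1$, which lies outside the basis index range $\{1,\ldots,k+m-1\}$; this is absorbed by the convention $z_0 = 0$, which forces $\delta_0 = 0$. The fact that the resulting forms live in $\M_{k,m}^*$ is automatic, since by Lemma~\ref{lem: moduli map is nondegenerate} the map $F_{k,m}$ sends $\M_{k,m}$ to itself, and hence so does its derivative $L$.
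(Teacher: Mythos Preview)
Your proof is correct and follows essentially the same route as the paper: compute the coordinates $w_i$ of $L\boldsymbol{v}$ by differentiating the explicit description of $F_{k,m}$, then read off $L^*\omega_i(\boldsymbol{v}) = w_i$. The only cosmetic difference is that the paper obtains the formulas for $w_i$ by differentiating the composition $\pi_{k,m}\circ\mathcal{Q}$ rather than directly invoking Lemma~\ref{lem:explicit form of Fkm}, and your explicit remark on the $k=1$ edge case (where $\delta_0=0$ by the convention $z_0=0$) is a welcome clarification the paper leaves implicit.
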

\begin{proof}
	Recall that for all $\boldsymbol{v} \in \M_{k,m}$, \[
	\begin{array}{rcl}
	L(\boldsymbol{v}) &=& \pi_{k,m} \circ D_{\z} \mathcal{Q}(\boldsymbol{v}).\\
	\end{array}
	\]
	Set $\boldsymbol{u} = D_{\z} \mathcal{Q} (\boldsymbol{v})$ then 
	\[
	u_1 = 0 \quad \text{ and for all $i \ge 2$}, \quad u_i = dz_{i-1}^{d-1} v_{i-1} = \delta_{i-1} v_{i-1}.
	\]
In addition, if $\boldsymbol{w} \coloneqq L(\boldsymbol{v})=\p_{k,m} (\boldsymbol{u})$ then 
	\[
	\text{ for all $i \ge 2$, } \quad  w_i = u_i +w_1 \text{ with } w_1 = \left\{
	\begin{array}{lc}
	-u_{m} &\text{ if } k =0\\
-\frac{\b u_{k+m-1} -u_{k-1}}{\b-1} &\text{ otherwise,}
	\end{array}
	\right.
	\]
	Combining those formulas, we obtain that $\boldsymbol{w} = L(\boldsymbol{v})$ and for all $i \ge 2$,
	\begin{equation}\label{eq: derivative Fkm}
	 w_{i} = \delta_{i-1} v_{i-1} +w_1 \text{ with } w_1 = \left\{
	\begin{array}{ll}
	-\delta_{m-1} v_{m-1} &\text{ if $k = 0$}\\
-\frac{\b \delta_{k+m-1} v_{k+m-1} -\delta_{k-1} v_{k-1}}{\b-1} &\text{ otherwise,}
	\end{array}
	\right.
	\end{equation}
We deduce that for all $\boldsymbol{v} \in \M_{k,m}$, 
	\[
	L^* \omega_1 (\boldsymbol{v}) = \omega_1\circ L(\boldsymbol{v}) =w_1= \left\{
	\begin{array}{ll}
	-\delta_{m-1} v_{m-1} &\text{ if $k = 0$}\\
-\frac{\b \delta_{k+m-1} \omega_{k+m-1} -\delta_{k-1} \omega_{k-1}}{\b-1} &\text{ otherwise,}
	\end{array}
	\right.
	\]
	hence
	\[
	L^*\omega_1 = \left\{
	\begin{array}{ll}
	-\delta_{m-1} \omega_{m-1} &\text{ if $k = 0$}\\
	-\frac{\b \delta_{k+m-1} \omega_{k+m-1} -\delta_{k-1} \omega_{k-1}}{\b-1} &\text{ otherwise.}
	\end{array}
	\right.
	\]
In addition, for all $i \ge 2$ and for all $\boldsymbol{v} \in \M_{k,m}$, we have
	\[
	L^*\omega_i(\boldsymbol{v}) = \omega_i\circ L(\boldsymbol{v}) = \delta_{i-1} v_{i-1} + w_1 =\delta_{i-1} v_{i-1} + \omega_1\circ L(\boldsymbol{v}),
	\]
	hence 
	\[L^*\omega_i = \delta_{i-1} \omega_{i-1} + L^*\omega_1.\qedhere\]
\end{proof}

\subsubsection{Fixed points outside the post-critical set}

According to Section \ref{sec: conjugate to Sarah maps}, the map $F_{k,m}$ is conjugate to the map $G_{k,m}$ constructed by Koch \cite{koch2013teichmuller}. By \cite[Corollary 7.2]{koch2013teichmuller}, the derivative of $G_{k,m}$ at its fixed points outside the post-critical set has only eigenvalues of modulus strictly greater than $1$, whence so does $F_{k,m}$. For the sake of completeness, we give here the proof of this property. For further discussion about the arithmetics of such eigenvalues, we refer to \cite{buff2017eigenvalues}. The main content of this paragraph is the following result.
\begin{proposition}\label{prop: fixed point outside} Let $(k,m) \in \N \times \N^*$ and $\z \notin PC(F_{k,m})$ be a fixed point of the moduli map $F_{k,m}$. Then every eigenvalue of $D_{\z} F_{k,m}$ has modulus strictly greater than $1$. 
\end{proposition}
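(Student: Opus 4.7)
The strategy is to identify the eigenvalues of $D_{\z}F_{k,m}$ with an algebraic equation in which the multiplier of the periodic cycle of the associated unicritical post-critically finite polynomial $P(t)=t^d+z_1$ appears explicitly, and then to invoke Milnor's theorem that nonzero multipliers of PCF polynomials satisfy $|\lambda|>1$.

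The preliminary step is a structural reduction. Since $\z\notin PC(F_{k,m})\supseteq CV(F_{k,m})$ and $F_{k,m}(\z)=\z$, the point $\z$ cannot be critical: otherwise $\z=F_{k,m}(\z)$ would lie in $CV(F_{k,m})\subseteq PC(F_{k,m})$, contradicting the hypothesis. By Lemma~\ref{lem: critical of Fkm} this gives $z_i\neq 0$ for every $1\le i\le k+m-1$, so the scalars $\delta_i:=dz_i^{d-1}$ appearing in Lemma~\ref{lem: pull-back of L} are all non-zero and $D_{\z}F_{k,m}$ is invertible. Moreover, Proposition~\ref{prop: critical values} gives $PC(F_{k,m})\subseteq\Delta_{k,m}$, so Lemma~\ref{lem: position of fixed points} forces $\z$ to be of exact type $(k,m)$, and Proposition~\ref{prop: critical orbit} identifies $P(t)=t^d+z_1$ as a PCF polynomial of exact type $(k,m)$.

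Next I would set up and simplify the eigenvector equation. Writing a candidate eigenvector as $\phi=\sum_{i=1}^{k+m-1}a_i\omega_i$ with $L^*\phi=\mu\phi$, the formulas of Lemma~\ref{lem: pull-back of L} give the recursion $\mu a_j=\delta_j a_{j+1}$ for $j\notin\{k-1,k+m-1\}$ together with two ``boundary'' equations at the remaining indices that involve the total sum $S=\sum_i a_i$. The recursion expresses each coordinate as an explicit multiple of $a_1$ (for $j\le k-1$) and of $a_k$ (for $j\ge k$); eliminating $a_k$ and $S$ from the boundary equations reduces the system to a single polynomial equation in $\mu$. The key dynamical identity is
\[
\lambda:=\beta\,\delta_k\delta_{k+1}\cdots\delta_{k+m-1}=\delta_{k+1}\cdots\delta_{k+m}=\prod_{i=k+1}^{k+m}P'(z_i),
\]
obtained from $z_{k+m}=z_k/\beta$ and $\beta^d=1$; it realises $\lambda$ as the multiplier of the periodic cycle $z_{k+1}\to\cdots\to z_{k+m}\to z_{k+1}$ of $P$.

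To finish, I would analyse the resulting characteristic equation case by case. When $k\ge 1$ the critical point $0$ of $P$ is strictly preperiodic and thus not in the periodic cycle, so $\lambda\neq 0$ and Milnor's Corollary~14.5~\cite{milnor2011dynamics} gives $|\lambda|>1$; a direct modulus bound on the characteristic equation then forces $|\mu|>1$ for every root $\mu$. When $k=0$ the cycle is superattracting ($\lambda=0$) and the characteristic polynomial degenerates to a quasi-companion form whose roots can be analysed directly to lie outside $\overline{\D}$. I expect the principal obstacle to lie in the algebraic bookkeeping of this final step: shaping the characteristic equation so that $\lambda$ appears transparently and so that \emph{strict} expansion (rather than merely $|\mu|\ge 1$) can be read off, and handling the $k=0$ degeneration separately. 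A more conceptual alternative, which is Koch's original route in \cite[Cor.~7.2]{koch2013teichmuller}, is to pass to the projectivisation of $F_{k,m}$ (nondegenerate by Lemma~\ref{lem: moduli map is nondegenerate} and conjugate to that of $G_{k,m}$ by Lemma~\ref{prop: FKm and GKm}), observe that it is a post-critically algebraic endomorphism, and apply Thurston rigidity to obtain strict repulsion at fixed points outside the post-critical set.
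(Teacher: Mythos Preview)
Your preliminary reductions are correct and match the paper's: once $\z\notin PC(F_{k,m})$ one has exact type $(k,m)$, all $\delta_i\neq 0$, and $L=D_{\z}F_{k,m}$ is invertible. The gap is in the main line of your argument, the ``direct modulus bound on the characteristic equation''. Writing out the eigenvector recursion as you suggest produces a characteristic polynomial in $\mu$ whose coefficients involve the individual numbers $\delta_1,\ldots,\delta_{k+m-1}$, not merely their cyclic product $\lambda$. Knowing only that $|\lambda|>1$ (or $\lambda=0$ when $k=0$) does not, by any elementary inequality, force every root to satisfy $|\mu|>1$; the $\delta_i$ can be individually small or large, and the location of the spectrum is a genuinely analytic fact about the specific PCF polynomial $P$, not a consequence of a single multiplier. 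You acknowledge this obstacle yourself, and indeed it is not a bookkeeping issue but the heart of the matter.

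What the paper does---and what your final paragraph gestures at---is to supply the missing analytic input. Rather than studying $L^*$ directly, one writes down its inverse $L_*$ explicitly via $L_*\omega_i=(\omega_{i+1}-\omega_1)/\delta_i$, and observes that this is \emph{formula-for-formula} the Thurston pushforward $P_*$ acting on the span of the quadratic differentials $Q_i=\dfrac{{\rm d}t^2}{t-z_i}$ (Lemma~\ref{lem: pull back}). The strict contraction $\|P_*Q\|_U<\|Q\|_U$ for the $L^1$ norm on a large disk $U\supset\!\supset P^{-1}(U)$ then places $\Spec L_*\subset\D$, hence $\Spec L^*\subset\C\setminus\overline{\D}$. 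This is exactly the infinitesimal form of Thurston rigidity you cite as an ``alternative''; it is not an alternative but the actual mechanism, and your proposed algebraic route cannot be completed without it (or an equivalent contraction argument).
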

\begin{proof} Since $\M_{k,m}$ has finite dimension, it is suffice to prove that every eigenvalue of the transpose $L^*$ of $D_{\z} F_{k,m}$ has modulus strictly bigger than $1$.  
	
Recall that, by Lemma \ref{lem: basis of E^*}, the family $\{\omega_{i}\colon \M_{k,m} \to \C\}_{i \in \{ 1,\ldots, k+m-1\}}$ is a basis of $\M_{k,m}^*$. According to Lemma \ref{lem: pull-back of L}, setting $\delta_i = dz_{i}^{d-1}$, we have
\[L^*\omega_1= \begin{cases} -\delta_{m-1}\omega_{m-1}&\text{if }k=0\\
\displaystyle -\frac{\b \delta_{k+m-1} \omega_{k+m-1} -\delta_{k-1}\omega_{k-1} }{\b -1}&\text{if }k\geq 1,\end{cases}\]
and for all $i\geq 2$, 
\[L^* \omega_{i} = \delta_{i-1} \omega_{i-1} + L^*\omega_1.\]

According to Lemma \ref{lem: position of fixed points}, the point $\z$ is a fixed point of $F_{k,m}$ of exact type $(k,m)$. Therefore, $\delta_{i} \neq 0$ for all $i\in  \{1,\ldots,k+m-1\}$. Indeed, note that according to Proposition \ref{prop: critical orbit}, the sequence $\z$ is the critical orbit of $P(t) = t^d + z_1$. Assume that $d z_i^{d-1} = \delta_{i}  = 0$ for some $i \in  \{1,\ldots, k+m-1\} $. Then $P^{\circ i}(0) = z_{i} = 0$. This implies that $k=0$ and $m$ divides $i$. However $i \le k+m-1 = m-1 < m$, hence contradiction.

We may therefore define a linear map 
$L_*:\M_{k,m}^*\to \M_{k,m}^*$ by 
\begin{equation}\label{eq:L}
\forall i\in \{ 1,\ldots, k+m-1\} \quad L_* (\omega_i) = \frac{\omega_{i+1} - \omega_1}{\delta_i}.
\end{equation}

\begin{lemma}
	The linear map $L^*$ is invertible and its inverse is $L_*$. 
\end{lemma}

\begin{proof}
	We need to prove that  $L_*\circ L^* = L^*\circ L_*  = {\rm id}$. 
	First, observe that for all $i\in \{1,\ldots,k+m-1\}$,
	\[
	\begin{array}{rcl}
L^*\circ L_*(\omega_i) = L^*\left(\frac{\omega_{i+1} - \omega_1}{\delta_i}\right) &=& \frac{1}{\delta_i}\left(L^*(\omega_{i+1}) - L^*(\omega_1)\right)\\
&=& \frac{1}{\delta_i}\bigl(\delta_i\omega_i + L^*(\omega_1)- L^*(\omega_1)\bigr) = \omega_i.
	\end{array}
	\]
	Second, we prove $L_* \circ L^* = \Id_{\M_{k,m}^*}$. Note that, by the definition of $\M_{k,m}$, we have that
	\[\omega_{k+m}  = \begin{cases}0&\text{if }k=0\\
	\frac{1}{\b}\omega_k &\text{if }k\geq 1\end{cases}\quad \text{and}\quad \forall i\geq k+m+1 \quad \omega_{i+m} = \omega_i.\] To compute $L_* \circ L^* (\omega_1)$, observe that if $k=0$, then 
	\[L_*\circ L^*(\omega_1) = L_*(-\delta_{m-1}\omega_{m-1}) = -\delta_{m-1} L_*(\omega_{m-1})= -(\omega_m-\omega_1) = \omega_1\]
	and if $k\geq 1$, then 
	\begin{eqnarray*}
		L_*\circ L^*(\omega_1) &=&L_*\left(-\frac{\b \delta_{k+m-1}  \omega_{k+m-1} - \delta_{k-1}\omega_{k-1}}{\b-1}\right) \\
		&=& -\left( \frac{\b}{\b-1}(\om_{k+m} - \om_1) - \frac{1}{\b-1}(\om_k - \om_1) \right) =\om_1.
	\end{eqnarray*}
	In both cases, $L_*\circ L^*(\omega_1)  = \omega_1$. 
	For $L_* \circ L^* (\omega_i)$ with $i\in  \{2,\ldots,k+m-1\}$, 
	\[L_*\circ L^*(\omega_i) = L_*\bigl(\delta_{i-1} \omega_{i-1} + L^*(\omega_1)\bigr) = \delta_{i-1}\frac{\omega_i-\omega_1}{\delta_{i-1}} + L_* L^*(\omega_1) = \omega_i.\]
	Thus, the linear map $L_*\colon \M_{k,m}^* \to \M_{k,m}^*$ is indeed the inverse of $L^*$
\end{proof}
In order to prove Proposition \ref{prop: fixed point outside}, it is therefore enough to prove that every eigenvalue of $L_*\colon \M_{k,m}^* \to \M_{k,m}^*$ is contained in the open unit disc $\D$. Inspired by the proof of \cite[Corollary 7.2]{koch2013teichmuller}, we will show that $L_*$ is conjugate to a linear transformation on a space of meromorphic quadratic differentials on $\C$, whose eigenvalues are all contained in $\D$. 

Consider the quadratic polynomial $P(t) \coloneqq t^d+z_1$, so that $z_i = P^{\circ i}(0)$ for all $i\geq 1$. 
Following Milnor \cite{milnor2014tsujii}, denote by $\mathfrak{Q}(\C)$ the space of meromorphic quadratic differentials on $\C$ which have at worst simple poles and let us use the notation $Q\in \mathfrak{Q}(\C)$ with 
\[Q = q(t) \text{d} t^2.\]
and $q(t)$ is a meromorphic function. Let $U\subset \C$ be a sufficiently large disk so that $P^{-1}(U)$ is compactly contained in $U$ and for $Q\in \mathfrak{Q}(\C)$, consider the norm 
\[\|Q\|_U \coloneqq \iint_U \bigl|q(t)\text{d} t^2\bigr|.\]
The pushforward of $Q$ by $P$ is the quadratic differential $P_*Q\in \mathfrak{Q}(\C)$ defined by 
\[P_*Q \coloneqq \sum_{P(u) = t} \frac{q(u)}{\bigl(P'(u)\bigr)^2}\text{d} t^2.\]
It follows from the triangle inequality that 
\[\|P_* Q\|_U \leq \|Q\|_{P^{-1}(U)} < \|Q\|_U.\]

For $i\geq 1$, let $Q_i\in \mathfrak{Q}(\C) $ be the quadratic differential defined by 
\[Q_i \coloneqq \frac{\text{d} t^2}{t-z_i}.\]
The following lemma generalizes a result due to Milnor, \cite[Lemma 1]{milnor2014tsujii} in the case $d=2$.
\begin{lemma}\label{lem: pull back}For all $ i\in \{1,\ldots,k+m-1\}$,
\begin{equation}\label{eq:P}
P_* Q_i = \frac{Q_{i+1} - Q_1}{\delta_i} .
\end{equation}
\end{lemma}
\begin{proof}[Proof of Lemma \ref{lem: pull back}]
	Set $\xi \coloneqq e^{\frac{2 \pi i }{d}}$. Observe that for a given $z \in \C$ and for a given $w \in \C$ such that $P(w) = z$, we have $\{ P(u)= z \} = \{ w,\xi w,\ldots,\xi^{d-1} w \}$ . Thus, for a given $i \in \{1,\ldots,k+m-1\}$,
\[\displaystyle \begin{array}{rcl}
	P_* Q_i (z) &=& \sum_{P(u) = z} \dfrac{1}{u -z_i}\dfrac{1}{\bigl(P'(u)\bigr)^2}\text{d} t^2\\
	 &=&\sum\limits_{j = 0}^{d-1} \dfrac{1}{\xi^j w - z_i} \dfrac{1}{\left(d \left(\xi^j w\right)^{d-1}\right)^2} \text{d}t^2\\
	 &=&\dfrac{1}{d^2 w^{2d-2}}\left(\sum\limits_{j=0}^{d-1}\dfrac{1}{\xi^{-j} w -  \xi^{-2j} z_i} \right) \text{d}t^2 .\\
\end{array}\]
Note that $\sum\limits_{j=0}^{d-1}\dfrac{1}{\xi^{-j} w -  \xi^{-2j} z_i} = \dfrac{ d z_i w^{d-2}}{w^d - z_i^d}\footnote{This equality is equivalent to the equality $\sum\limits_{j = 0}^{d-1} \frac{1}{\xi^{-j} \chi -\xi^{-2j}} = \frac{d \chi^{d-2}}{\chi^d - 1} $. The later follows from an elementary computation by comparing the partial fraction decomposition.}$. Therefore,
\[
\begin{array}{rcl}
P_* Q_i(z) & = & \dfrac{z_i}{dw^d(w^d - z_i^d)}\text{d}t^2\\
&=& \dfrac{1}{\delta_i} \dfrac{z_i^d}{w^d(w^d - z_i^d)}\text{d}t^2
\end{array}
\]
Since $w^d = z - z_1, z_{i}^d = z_{i+1} - z_1,$ we have
\[
P_* Q_i(z) = \dfrac{1}{\delta_i} \dfrac{z_{i+1} - z_1}{(z-z_1)(z- z_{i+1})} \text{d}t^2 = \frac{1}{\delta_i} \left(\frac{1}{z - z_{i+1}} -\frac{1}{z - z_1}\right) \text{d}t^2.
\]
Thus, $P_i Q_i = \dfrac{Q_{i+1} - Q_i}{\delta_{i}}$ \qedhere
	\end{proof}
The quadratic differentials $(Q_i)_{1 \le i \le k+m-1}$ span a vector space $\mathcal{Q}_P\subset \mathfrak{Q}(\C)$ of dimension $k+m-1$. According to Equation \eqref{eq:P}, this subspace is invariant by $P_*$. 
According to  Equations \eqref{eq:L} and \eqref{eq:P}, the linear map ${\iota :\mathcal{Q}_P\to {\mathcal{M}}_{k,m}}$ which sends $Q_i\in \mathfrak{Q}(\C)$ to $\omega_i\in \M_{k,m}$ is an isomorphism which conjugates ${P_*:\mathcal{Q}_P\to \mathcal{Q}_P}$ to $L_*:\M_{k,m}^*\to \M_{k,m}^*$

Since $\|P_* Q\|_U<\|Q\|_U$ for all $Q\in\mathcal{Q}_P$, the spectrum of $P_*:\mathcal{Q}_P\to \mathcal{Q}_P$ is contained in the unit disk. It follows that the spectrum of $L_*:\M_{k,m}^*\to \M_{k,m}^*$ is contained in the unit disk as required. 

\end{proof}
\subsubsection{Fixed points inside the post-critical set}\label{sec:fixed_point_inside}
We will now study the derivatives of moduli maps at fixed points which are inside the post-critical set. Let $\z \in PC(F_{k,m})$ be a fixed point of $F_{k,m}$ and let $(k',m')$ be the exact type of $\z$.

According to Lemma \ref{lem: position of fixed points}, $(k',m') \prec (k,m)$ and, by Proposition \ref{prop: partial order}, $\M_{k',m'} \subsetneq \M_{k,m}$ is invariant under $F_{k,m}$. Since $\M_{k',m'}$ is invariant under $D_{\z}F_{k,m}$, the vector space \[{\M_{k',m'}^0 = \{\omega \in \M_{k,m}^* \mid \omega|_{\M_{k',m'}} \equiv 0 \}},\]which is called \textit{the annihilator of }$ \M_{k',m'}$ in $\M_{k,m}$, is invariant under the transpose $L^*$ of $D_{z} F_{k,m}$ and we have the following decomposition
\begin{equation}\label{eq: decomposition of L}
\Spec L = \Spec(L|_{ \M_{k',m'}}) \cup \Spec\left( L^*|_{  \M_{k',m'}^0} \right).
\end{equation}
Moreover, according to Proposition \ref{prop: partial order}, we have
\[
L|_{\M_{k',m'}} = D_{\z} F_{k',m'}.
\]
Whence, by Proposition \ref{prop: fixed point outside}, $L|_{\M_{k',m'}}$ has only eigenvalues of modulus strictly greater than $1$. 
In order to describe $\Spec L$, we need to study  $\Spec\left( L^*|_{  \M_{k',m'}^0}\right)$. We will prove the following result.
\begin{proposition}\label{prop: eigenvalue inside}
	Let $(k,m) \in \N \times \N^*$ and $\z \in PC(F_{k,m})$ be a fixed point of $F_{k,m}$ of exact type $(k',m') \prec (k,m)$. Let $\lambda$ be the multiplier of the polynomial $P(t) = t^d + z_1 \in \C[t]$ along the cycle of $P^{\circ k'}(z_1)$. Then 
	\[
	\Spec \left( (D_{\z} F_{k,m})^*|_{  \M_{k',m'}^0} \right) =  
	\begin{cases} \{0\}&\text{if }k'=0\\
	\displaystyle \{ \mu \mid \mu^m = \lambda^{\frac{m}{m'}}, \mu^{m'} \neq \lambda \} &\text{if } k' \neq 0 .\end{cases}\]
\end{proposition}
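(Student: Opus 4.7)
I plan to analyze $L^* := (D_{\z}F_{k,m})^*$ on $\M_{k',m'}^0$ via a residue-class decomposition modulo $m'$ combined with a discrete Fourier transform. The essential input is the periodicity $\delta_{i+m'} = \delta_i$ for all $i \geq k'+1$ (where $\delta_i := dz_i^{d-1}$), which holds because $\z$ has exact type $(k',m')$. Lemma~\ref{lem: pull-back of L} then says that $L^*$ essentially shifts indices by $-1$ with periodic scalars, so each residue class modulo $m'$ carries an approximate cyclic action.

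In the case $k' \neq 0$, so $k = k' \geq 1$ and $r := m/m' \geq 2$, I would group the basis vectors as $\xi_{l,a} := \omega_{k+l+am'}$ for $l\in\{1,\ldots,m'\}$ and $a\in\{0,\ldots,r-1\}$, with the identification $\xi_{m',r-1} = \omega_{k+m} = \omega_k/\beta$. Combining $z_k = \beta z_{k+m'}$ (from $\beta z_{k+m}=z_k$ and $z_{k+m}=z_{k+m'}$) with $\beta^d=1$ yields $\delta_k = \beta^{-1}\delta_{k+m'}$, and together with $\omega_k = \beta \xi_{m',r-1}$ this gives the uniform formula
\[
L^*\xi_{l,a} = \delta_{k+l-1}\,\xi_{l-1,a} + L^*\omega_1 \quad (l\geq 2), \qquad L^*\xi_{1,a} = \delta_{k+m'}\,\xi_{m',\,a-1 \bmod r} + L^*\omega_1.
\]
Passing to the Fourier transform $\hat\xi_{l,s} := \sum_{a=0}^{r-1}\zeta^{-as}\xi_{l,a}$ with $\zeta := e^{2\pi i/r}$, the correction $L^*\omega_1$ drops out whenever $s\neq 0$ (because $\sum_a \zeta^{-as}=0$), and the summands $\bigoplus_{s=1}^{r-1}\Span\{\hat\xi_{l,s}\}_l$ exhaust $\M_{k',m'}^0$. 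On each such $m'$-dimensional piece, $(L^*)^{m'}$ acts as the scalar $\zeta^{-s}\prod_{j=1}^{m'}\delta_{k+j} = \lambda\zeta^{-s}$, since $\prod_{j=1}^{m'}\delta_{k+j} = \prod_{j=1}^{m'} P'(z_{k+j}) = \lambda$. A standard circulant argument---given a nonzero $v$ in such a piece, the vector $w_\mu := \sum_{t=0}^{m'-1}\mu^{-t}(L^*)^t v$ is a nonzero eigenvector of $L^*$ with eigenvalue $\mu$ whenever $\mu^{m'} = \lambda\zeta^{-s}$---then produces, over all $s\in\{1,\ldots,r-1\}$, exactly $m'(r-1)=m-m'$ distinct eigenvalues, coinciding with $\{\mu : \mu^m = \lambda^{m/m'},\ \mu^{m'}\neq\lambda\}$.

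In the case $k'=0$, the critical point $0$ lies on the cycle of $P$, so $P'(0)=0$ forces $\delta_{am'}=0$ for every $a\geq 1$ and the multiplier $\lambda$ itself vanishes. The analogous decomposition reveals a broken cycle: in residue class $l=1$, the vanishing $\delta_{am'}=0$ gives $L^*\omega_{1+am'}=L^*\omega_1$ for every $a\geq 0$, so $L^*$ annihilates every difference $\omega_{1+am'}-\omega_{1+bm'}$; for $l\in\{2,\ldots,m'-1\}$ the shift remains by the nonzero scalar $\delta_{l-1}$; and the remaining piece $\Span\{\omega_{am'}:a\geq 1\}\subseteq\M_{0,m'}^0$ maps into the $(m'-1)$-th residue class, landing inside the mean-zero subspace once one verifies a short identity on the coefficients of $L^*\omega_1$. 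Iterating $m'$ times, $(L^*)^{m'}$ annihilates all of $\M_{0,m'}^0$, so $L^*|_{\M_{0,m'}^0}$ is nilpotent with spectrum $\{0\}$.

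The main obstacle is the delicate bookkeeping in the case $k'=k\geq 1$: the relation $\omega_k = \beta\omega_{k+m}$ from the definition of $\Mkm$ has to cancel precisely against the identity $\delta_k = \beta^{-1}\delta_{k+m'}$ from $\beta z_{k+m}=z_k$ and $\beta^d=1$, so that the scalar accumulated around one full residue cycle is exactly the multiplier $\lambda$. The Fourier diagonalization is what makes this cancellation transparent and turns an otherwise intricate matrix calculation into a one-line cyclic permutation argument.
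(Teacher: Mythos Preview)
Your argument is correct, and for the main case $k'\neq 0$ it takes a genuinely different route from the paper. The paper works with the difference forms $\beta_j=\omega_{\underline{j}}-\omega_{\underline{j+m'}}$ (which in your notation are $\xi_{l,a}-\xi_{l,a+1\bmod r}$), shows they span $\M_{k,m'}^0$, establishes the single shift relation $L^*\beta_j=\sigma_{j-1}\beta_{j-1}$, and then analyzes the normalized operator $T=L^*/\nu$ via the identity $\Id+T+\cdots+T^{m-1}=0$ to conclude that $T$ is diagonalizable with simple eigenvalues among the $m$-th roots of unity other than $1$. Your Fourier decomposition over $\Z/r\Z$ replaces this last step: instead of passing to differences and arguing about a minimal polynomial, you diagonalize the translation $a\mapsto a-1$ directly, so that each $V_s$ ($s\neq 0$) carries a pure weighted cyclic permutation on $m'$ vectors with determinant $\lambda\zeta^{-s}$, and the spectrum is read off immediately. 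The $\beta$-cancellation $\delta_k\omega_k=\delta_{k+m'}\xi_{m',r-1}$ that you isolate is exactly what the paper hides inside its computation of $L^*\omega_{\underline{j}}$ for $\underline{j}=k+1$. What the Fourier approach buys is a transparent block-diagonal structure and simplicity of eigenvalues for free; what the paper's difference-form approach buys is a uniform treatment of both cases, since $L^*\alpha_i=\delta_{i-1}\alpha_{i-1}$ (case $k'=0$) and $L^*\beta_j=\sigma_{j-1}\beta_{j-1}$ (case $k'\neq 0$) are formally identical, and only the boundary behavior (vanishing of $\delta_{m'}$ versus cyclic wrap-around) distinguishes nilpotent from semisimple.

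For $k'=0$, your sketch is correct but less economical than the paper's. The paper simply notes that the $\alpha_i=\omega_i-\omega_{i+m'}$ span $\M_{0,m'}^0$, that $L^*\alpha_1=0$ (because $\delta_{m'}=dz_{m'}^{d-1}=0$), and that $L^*\alpha_i=\delta_{i-1}\alpha_{i-1}$ for $i\ge 2$, so $L^*$ is strictly triangular on this spanning set. Your residue-class picture reaches the same conclusion $(L^*)^{m'}=0$---which is indeed true, since any product $\delta_{i-m'}\cdots\delta_{i-1}$ contains a factor $\delta_{am'}=0$---but the ``short identity on the coefficients of $L^*\omega_1$'' you allude to (that the coefficients of $L^*\omega_1$ in the $(m'-1)$-residue class sum to $-\delta_{m'-1}$, cancelling the $+\delta_{m'-1}$ from $\delta_{am'-1}\omega_{am'-1}$) is doing real work and should be stated explicitly if you keep this organization.
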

The rest of this section is devoted to the proof of this proposition. To simplify the notation, we denote by $L^*$ the restriction of $(D_{\z} F_{k,m})^*$ to $\M_{k',m'}^0$. The study of the transpose $L^*\colon \M_{k',m'}^0 \to \M_{k',m'}^0$ is divided into two cases, $k' = 0 $ and $k' \neq 0$, and each case will be treated separately.
\begin{proof}[Proof of Proposition \ref{prop: eigenvalue inside} when $k'=0$] Since $(0,m') \preceq (k,m)$, $m'$ divides $k$ and $m$. It is enough to prove that $L^*: \M_{0,m'}^0 \to \M_{0,m'}^0$ is nilpotent. Recall that for $i \ge 1,$ the form $\omega_i\colon  \M_{k,m} \to \C$ is defined by $\omega_i(\boldsymbol{v}) =v_i$. For $i\ge 1$, set $\alpha_i\colon \M_{k,m} \to \C$ defined by 
\[
\alpha_i = \omega_i - \omega_{i+m'}.
\]
Recall that $\M_{0,m'}^0 = \{ \omega \colon \M_{k,m} \to \C \text{ such that } \omega|_{\M_{0,m'}} \equiv 0 \}$.
\begin{lemma}\label{lem: span of E}
We have $\M_{0,m'}^0 = \Span\left\{ \alpha_i, 1 \le i \le k+m  \right\}.$
\end{lemma}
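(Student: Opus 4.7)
The plan is to prove the two inclusions separately. The direction $\Span\{\alpha_i\}\subseteq\M_{0,m'}^0$ is immediate: any $\boldsymbol v\in\M_{0,m'}$ has $v_{i+m'}=v_i$ by $m'$-periodicity, so $\alpha_i(\boldsymbol v)=v_i-v_{i+m'}=0$.

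For the reverse inclusion I would argue by dimension. Since $\dim\M_{k,m}=k+m-1$ and $\dim\M_{0,m'}=m'-1$, we have $\dim\M_{0,m'}^0=k+m-m'$, so it suffices to exhibit that many linearly independent $\alpha_i$'s among the $k+m$ generators. The natural relations come from telescoping: for each $i\in\{k+1,\ldots,k+m'\}$, writing $q:=m/m'$, one has
\[
\sum_{j=0}^{q-1}\alpha_{i+jm'}=\omega_i-\omega_{i+m}=0,
\]
using the periodicity $\omega_{j+m}=\omega_j$ valid for $j\ge k+1$. These $m'$ relations have pairwise disjoint support (distinct residues modulo $m'$), hence are linearly independent, and they allow us to eliminate $\alpha_i$ for $i\in\{k+1,\ldots,k+m'\}$ without changing the span. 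The remaining generators are indexed by $I=\{1,\ldots,k\}\cup\{k+m'+1,\ldots,k+m\}$, of cardinality exactly $k+m-m'$.

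The main task, which I expect to be the principal obstacle, is to verify that these $k+m-m'$ forms are linearly independent. My plan is to decompose by residue class modulo $m'$: since $m'$ divides both $k$ and $m$, the basis reductions $\omega_{k+m}=\omega_k/\beta$ (or $\omega_m=0$ when $k=0$) and $\omega_j=\omega_{j-m}$ for $j\ge k+m+1$ all preserve residue classes mod $m'$. Consequently $\Span\{\alpha_i : i\in I\}$ splits as a direct sum over the $m'$ residue classes, and the question reduces to checking independence inside each class separately. Within each residue class, one obtains $q'-1$ forms (where $q':=(k+m)/m'$) whose coefficient matrix in the basis $\omega_1,\ldots,\omega_{k+m-1}$ is, after reordering, block upper-triangular with mostly $\pm 1$ diagonal entries; the only delicate block is the residue-$0$ block, where the identity $\omega_{k+m}=\omega_k/\beta$ produces one off-diagonal entry proportional to $1/\beta$ and the resulting minor computes to a nonzero multiple of $(\beta-1)/\beta$. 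The hypothesis $\beta\neq 1$ is precisely what is needed to conclude nondegeneracy, completing the dimension count and hence the lemma.
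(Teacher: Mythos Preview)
Your approach is correct but takes a substantially different and more laborious route than the paper. You argue by dimension count: after the easy inclusion, you aim to exhibit $k+m-m'$ linearly independent $\alpha_i$'s via a residue-class decomposition modulo $m'$ and explicit rank computations, with the residue-$0$ block ultimately yielding a determinant proportional to $(\beta-1)/\beta$. This can indeed be carried through as you outline (for the nonzero residue classes the coefficient matrix is the incidence matrix of a tree, and for the residue-$0$ class the kernel analysis reduces to $c_a(1-1/\beta)=0$), but the bookkeeping is considerable and your write-up remains a sketch at the key step.

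The paper instead passes to the dual statement
\[
\M_{0,m'} \;=\; \bigcap_{1\le i\le k+m}\Ker\alpha_i
\]
and verifies both inclusions directly. The nontrivial inclusion is: if $\boldsymbol v\in\M_{k,m}$ satisfies $v_i=v_{i+m'}$ for all $1\le i\le k+m$, then (i) $\boldsymbol v$ is $m'$-periodic, by reducing any index $j\ge k+m+1$ modulo $m$ into $\{k+1,\ldots,k+m\}$; and (ii) $v_{m'}=0$, since $m'\mid k$ and $m'\mid m$ give $v_{m'}=v_k=v_{k+m}$, while $\beta v_{k+m}=v_k$ with $\beta\ne1$ forces $v_k=0$. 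This is a two-line argument with no matrix computations; the hypothesis $\beta\ne1$ appears exactly once, in the implication $v_k=\beta v_k\Rightarrow v_k=0$.

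Both proofs use $\beta\ne1$ essentially, but the paper's dual formulation isolates this cleanly and bypasses your rank analysis entirely. Your approach does have the side benefit of producing an explicit basis $\{\alpha_i : i\in I\}$ of $\M_{0,m'}^0$, but for the lemma as stated the paper's argument is far more economical.
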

\begin{proof}
By duality, it is equivalent to show that
\[
\M_{0,m'} = \bigcap\limits_{1 \le i \le k+m} \Ker \alpha_i.
\]

Assume $\boldsymbol{ v} \in \M_{0,m'}$. Then for all $j \ge 1,  v_{j} =  v_{j+m'}$. Given $i \in  \{1,\ldots,k+m\}$, we have
\[
\alpha_i (\boldsymbol{ v}) = \omega_i(\boldsymbol{ v}) - \omega_{i+m'}(\boldsymbol{ v}) =  v_{i} -  v_{i+m'} = 0.
\]
Hence $\M_{0,m'} \subseteq \bigcap\limits_{1 \le i \le k+m} \Ker \alpha_i$. 

Conversely, assume $\boldsymbol{ v} \in \bigcap\limits_{1 \le i \le k+m} \Ker \alpha_i$, i.e. for all $i \in  \{1,\ldots,k+m\}$, $v_i = v_{i+m'}$. In order to prove that $\boldsymbol{v} \in \M_{0,m'}$, we will prove that for all $j \ge k+m+1,  v_j =  v_{j+m'}$ and that $ v_{m'} = 0$. Given $j \geq k+m+1$, there exists an integer $j' \in  \{k+1,\ldots,k+m\}$ such that $j\equiv j' \mod{m}$. Since $ \boldsymbol{v} \in \M_{k,m}$, we have $v_{j} = v_{j'}$ and $v_{j+m'} = v_{j'+m'}$. Moreover, the fact that $\boldsymbol{ v} \in \Ker\alpha_{j'}$ implies that $v_{j'} = v_{j'+m'}$. Thus
\[
 v_{j} =  v_{j'} =  v_{j'+m'} =  v_{j+m'}.
\]
In order to conclude, we need to show that $ v_{m'} =0$. Note that the previous argument shows that $\boldsymbol{v}$ is a periodic sequence of period dividing $m'$. Since $m'$ divides $k$ and $m$,
\[
 v_{m'} =  v_{k} =  v_{k+m}.
\]
Since $\boldsymbol{ v} \in \M_{k,m},$ we have $ \b  v_{k+m} -v_k =0$ with $\b \neq 1$, whence \[v_{m'} =  v_{k} =  v_{k+m} =0.\]
\end{proof}
\begin{lemma}\label{lem: action of L periodic case}
We have $L^*\alpha_1= 0$ and for $i \ge 2$, $L^* \alpha_i = \delta_{i-1} \alpha_{i-1}$.
\end{lemma}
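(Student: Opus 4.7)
The plan is to compute $L^*\alpha_i$ directly from the formulas of Lemma \ref{lem: pull-back of L}, exploiting two facts about the fixed point $\z$ of exact type $(0,m')$: (i) the sequence $\z$, and therefore also the scalars $\delta_i = dz_i^{d-1}$, is periodic of period $m'$ starting from the index $i=1$; and (ii) $z_{m'}=0$, so that $\delta_{m'}=0$. Fact (ii) is the substantive input, and it is precisely the observation already made inside the proof of Proposition \ref{prop: critical orbit}: when $k'=0$ the associated polynomial $P(t)=t^d+z_1$ has $0$ as a periodic point of period $m'$, whence $z_{m'}=P^{\circ m'}(0)=0$.

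Before computing, I would dispose of the degenerate case $m'=1$. If $m'=1$ then $\z$ is a constant sequence, and the defining relation $\b z_{k+m}-z_k=0$ of $\Mkm$ (with the convention $z_0\coloneqq 0$ when $k=0$) together with $\b\neq 1$ forces the constant to vanish, contradicting $\z\neq\boldsymbol{0}$. Hence $m'\geq 2$, and in particular $1+m'\geq 3$, so the recursion $L^*\om_j=\delta_{j-1}\om_{j-1}+L^*\om_1$ supplied by Lemma \ref{lem: pull-back of L} for indices $j\geq 2$ is applicable to $\om_{1+m'}$.

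For $i\geq 2$ both $i$ and $i+m'$ exceed $1$, so
\[
L^*\alpha_i \;=\; L^*\om_i - L^*\om_{i+m'} \;=\; \bigl(\delta_{i-1}\om_{i-1}+L^*\om_1\bigr)-\bigl(\delta_{i+m'-1}\om_{i+m'-1}+L^*\om_1\bigr),
\]
which collapses to $\delta_{i-1}(\om_{i-1}-\om_{i+m'-1})=\delta_{i-1}\alpha_{i-1}$ thanks to fact (i). For $i=1$, the same recursion applied to $\om_{1+m'}$ yields $L^*\om_{1+m'}=\delta_{m'}\om_{m'}+L^*\om_1$, and fact (ii) kills the first term, leaving $L^*\alpha_1=L^*\om_1-L^*\om_{1+m'}=0$. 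The only conceptual step is establishing $z_{m'}=0$; every other manipulation is immediate from Lemma \ref{lem: pull-back of L} and the periodicity of $\z$, so I do not anticipate a serious obstacle.
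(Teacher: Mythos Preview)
Your proof is correct and follows essentially the same route as the paper: apply the recursion $L^*\omega_j=\delta_{j-1}\omega_{j-1}+L^*\omega_1$ from Lemma~\ref{lem: pull-back of L} to both terms of $\alpha_i=\omega_i-\omega_{i+m'}$, use the $m'$-periodicity of $\delta$ for $i\ge 2$, and use $z_{m'}=0$ to kill the $i=1$ term. The only minor differences are cosmetic: your separate treatment of $m'=1$ is unnecessary (the recursion already applies once $1+m'\ge 2$, which is automatic), and the paper justifies $z_{m'}=0$ directly from the definition $\M_{0,m'}\subset\mathcal{H}_{0,m'}=\{x_{m'}=0\}$ rather than via Proposition~\ref{prop: critical orbit}.
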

\begin{proof} According to Lemma \ref{lem: pull-back of L}, for all $i \ge 2$, 
	\[
	L^*\omega_i =  \delta_{i-1} \omega_{i-1} + L^*\omega_1.
	\]
Hence, if $i \ge 2$, 
\[
L^*\alpha_i = L^*( \omega_{i} - \omega_{i+m'})= \delta_{i-1}\omega_{i-1}  - \delta_{i+m'-1} \omega_{i+m'-1}.
\]
Since $\z \in \M_{0,m'}$, we have $\delta_{i-1} = d z_{i-1}^{d-1} = d z_{i+m'-1}^{d-1} = \delta_{i+m'-1}$. Hence 
\[
L^*\alpha_{i-1} = \delta_{i-1} (\omega_{i-1} - \omega_{i+m'-1}) = \delta_{i-1}  \alpha_{i-1}.
\]

If $i = 1$, since $m' \ge 1$, we have $1 + m' \ge 2$ so that
	\[
	L^*\omega_{1+m'} = \delta_{m'} \omega_{m'} + L^*\omega_1.
	\]
	Hence 
	\[
	L^*\alpha_{1} = L^*(\omega_1 - \omega_{1+m'}) = - \delta_{m'} \omega_{m'}.
	\]
Since $\z \in \M_{0,m'}$, we have $z_{m'} = 0$. Therefore, $\delta_{m'} = d z_{m'}^{d-1} = 0$ and $L^*\alpha_1 = 0$. \qedhere

\end{proof}
It follows from Lemma \ref{lem: span of E} and Lemma \ref{lem: action of L periodic case} that $L^*\colon\M_{0,m'} \to \M_{0,m'} $ is nilpotent.
\end{proof}
\begin{proof}[Proof of Proposition \ref{prop: eigenvalue inside} when $k' \neq 0$]
In this case, since $(k',m') \prec (k,m)$, we have \[k'=k \text{ and } m = pm'  \text{ with } p \ge 2.\] 
Let $\lambda$ be the multiplier of $P(t) = t^d + z_1$ at $P^{\circ k}(0)$. Note that, according to Proposition \ref{prop: critical orbit}, $\z$ is the critical orbit of $P$. Since $\z$ is preperiodic of preperiod $k >0$, the critical point $0$ of $P$ is preperiodic, i.e. $\lambda \neq 0$. We will show that 
\[
\Spec (L^*: \M_{k,m'}^0 \to \M_{k,m'}^0) = \{ \mu \mid \mu^m = \lambda^p, \mu^{m'} \neq \lambda \}.
\]
Given $j\in \Z/m\Z$, denote by ${\underline j}$ the representative of $j$ in $ \{k+1,\ldots,k+m\}$, define a linear form $\beta_j\colon\M_{k,m} \to \C$ by\[\beta_j \coloneqq \omega_{\underline j} - \omega_{\underline {j+m'}}.\]
Note that for all $j \in \Z/m\Z$, $\beta_{j}\colon\M_{k,m} \to \C$ is non trivial. Indeed, for a given $j \in \Z/m\Z$, define $\boldsymbol{u} \in\M_{k,m}$ by
\[
u_i = \begin{cases}
1 &\text{ if $ i \ge k+1$ and $i \equiv \ud{j} \mod{m}$}\\
\frac{1}{\b} & \text{ if $i =k$ and $\ud{j} =k+m$}\\
0&\text{otherwise}.
\end{cases}
\]
Since $m'< m$, $\beta_j(\boldsymbol{u}) = u_{\ud{j}} = 1 \neq 0 $.

We will show that these forms span $\M_{k,m'}^0 \subsetneq \M_{k,m}$ and use them to study the linear map $L^*: \M_{k,m'}^0 \to \M_{k,m'}^0$. The properties we need are provided by the following lemmas. 
\begin{lemma}\label{lem: structure of Omega}
We have $\M_{k,m'}^0 = \Span \{\beta_{ j},  j \in \Z/m\Z \}$.
\end{lemma}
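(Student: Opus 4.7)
The plan is to prove the duality $\M_{k,m'} = \bigcap_{j \in \Z/m\Z} \Ker \beta_j$ as subspaces of $\M_{k,m}$, from which $\M_{k,m'}^0 = \Span\{\beta_j, j \in \Z/m\Z\}$ follows immediately by finite-dimensional annihilator duality. The argument parallels Lemma \ref{lem: span of E}, the novelty being the wrap-around representatives $\ud{j} \in \{k+1, \ldots, k+m\}$ needed here because $k \neq 0$.

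For the easy inclusion $\M_{k,m'} \subseteq \bigcap_j \Ker \beta_j$, I would take $\boldsymbol{v} \in \M_{k,m'}$: the sequence $\boldsymbol{v}$ has period dividing $m'$ from index $k+1$ onward, and $\ud{j}, \ud{j+m'}$ both lie in $\{k+1, \ldots, k+m\}$ and are congruent modulo $m'$ (since $m = pm'$), so $v_{\ud{j}} = v_{\ud{j+m'}}$, i.e.\ $\beta_j(\boldsymbol{v}) = 0$. For the converse, assume $\boldsymbol{v} \in \bigcap_j \Ker \beta_j$. Grouping the equalities $v_{\ud{j}} = v_{\ud{j+m'}}$ by residue of $j$ modulo $m'$ forces the cyclic block $(v_{k+1}, \ldots, v_{k+m})$ to have period dividing $m'$. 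Combined with the $m$-periodicity on indices $\ge k+1$ built into $\M_{k,m}$, this yields $v_i = v_{i+m'}$ for every $i \ge k+1$, so $\boldsymbol{v} \in \mathcal{P}_{k,m'}$. In particular $v_{k+m'} = v_{k+2m'} = \cdots = v_{k+pm'} = v_{k+m}$, so the hyperplane condition $\beta v_{k+m} = v_k$ defining $\mathcal{H}_{k,m}$ transfers to $\beta v_{k+m'} = v_k$, placing $\boldsymbol{v}$ in $\mathcal{H}_{k,m'}$ and thus in $\M_{k,m'} = \mathcal{H}_{k,m'} \cap \mathcal{P}_{k,m'}$.

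The main place to be careful is the combinatorics of the representatives $\ud{\cdot}$: I would verify that the $m$ equalities $v_{\ud{j}} = v_{\ud{j+m'}}$ really do amount to ``$m'$-periodicity of the cyclic block'' and no more, which is consistent with the $m'$ telescoping relations $\sum_{i=0}^{p-1} \beta_{r+im'} = \omega_{\ud{r}} - \omega_{\ud{r+pm'}} = 0$ (one per residue $r$ modulo $m'$, since $pm' = m$). These leave exactly $m - m'$ independent $\beta_j$'s, matching $\dim \M_{k,m'}^0 = (k+m-1) - (k+m'-1) = m-m'$. This dimension count is a useful sanity check, though it is not strictly needed once the inclusion-based duality argument above is in place, and I do not expect any deeper obstacle.
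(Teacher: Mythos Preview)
Your proposal is correct and follows essentially the same approach as the paper: reduce to the duality $\M_{k,m'} = \bigcap_{j \in \Z/m\Z} \Ker \beta_j$, prove both inclusions using that $\ud{j}$ and $\ud{j+m'}$ are congruent modulo $m'$ and lie in the periodic tail, and transfer the hyperplane condition via $v_{k+m'} = v_{k+m}$. The dimension count you add is a nice sanity check not present in the paper, but the core argument is the same.
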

\begin{proof}
By duality, it is equivalent to show that
\[
\M_{k,m'} =\bigcap\limits_{ j \in \Z/m\Z} \Ker \beta_{ j}.
\]

Assume $\boldsymbol{ v} \in \M_{k,m'}$. Then for all $i \ge k+1,  v_{i} =  v_{i+m'}$. Since $\underline{j} \in j $ and $m'$ divides $m$, we have $\underline{j}  \equiv \underline{j+m'} \mod{m'}$. Moreover, $\ud{j} \geq k+1$ and $\ud{j+m'}\geq k+1$, whence
\[
\beta_{ j}(\boldsymbol{v})  =  v_{\underline{j}} -  v_{\underline{j+m'}} =0.
\]
This shows that $\M_{k,m'} \subseteq \bigcap\limits_{ j \in \Z/m\Z} \Ker \beta_{ j}$.

Conversely, assume $\boldsymbol{ v} \in \bigcap\limits_{ j \in \Z/m\Z} \Ker \beta_{ j}$. We want to prove that for all integer $i\ge k+  1,  v_{i} =  v_{i+m'}$ and that $ \b  v_{k+m'} -v_k= 0$. First, assume $i \ge k+ 1$ and let $j \in \Z/m\Z$ be the congruence class of $i$. Since $\underline{j} \in j$, we have $i \equiv \ud{j} \mod{m}$. Moreover $\boldsymbol{ v} \in \Ker \beta_{j}$, and so
\begin{equation*}
 v_{\underline{j}} -  v_{\underline{j+m'}} = \beta_{j}(\boldsymbol{ v}) = 0.
\end{equation*}
From the fact that $\boldsymbol{ v} \in \M_{k,m}$, we therefore deduce that $v_i = v_{\underline{j}}$ and $v_{\underline{j+m'}} = v_{i+m'}.$ Thus,
\[ v_{i}  =   v_{i+m'}.\]
Second, let us show $\b v_{k+m'} -v_k = 0$. The previous argument shows that $\boldsymbol{v}$ is preperiodic of preperiod at most $k$ and of period dividing $m'$. Since $m'$ divides $ m$, we deduce that $ v_{k+m'} = v_{k+m}$. Since $\boldsymbol{ v} \in \M_{k,m}$, we have $ \b  v_{k+m} -v_k = 0$. Thus $\b  v_{k+m'}-v_k = 0$.
	\end{proof}

Thus, it is now important to understand the how $L^*$ acts on $\{\beta_j, j \in \Z/m\Z \}$. Given $j \in \Z/m\Z$, set 
\begin{equation*}
\sigma_j = \delta_{\ud{j}} =dz_{\ud{j}}^{d-1}.
\end{equation*}
\vspace{-0.7cm}
\begin{lemma}\label{lem: pull back preperiodic case}
For $j \in  \Z/m\Z $, we have $L^*\beta_{j} = \sigma_{{j-1}} \beta_{j-1}.$ 
\end{lemma}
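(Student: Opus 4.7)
The plan is to unfold $L^*\beta_j$ directly from the definitions using the formulas of Lemma~\ref{lem: pull-back of L}, and to reconcile boundary indices by exploiting the defining relation $\beta z_{k+m} = z_k$ of $\M_{k,m}$. Since $k\ge 1$ in this case, the representatives $\ud{j}$ and $\ud{j+m'}$ both lie in $\{k+1,\ldots,k+m\}$, which is contained in $\{2,3,\ldots\}$. Hence Lemma~\ref{lem: pull-back of L} applies in its second form to both $L^*\omega_{\ud{j}}$ and $L^*\omega_{\ud{j+m'}}$, and the common $L^*\omega_1$ contributions cancel when I subtract. This yields
\[
L^*\beta_j \;=\; \delta_{\ud{j}-1}\,\omega_{\ud{j}-1} \;-\; \delta_{\ud{j+m'}-1}\,\omega_{\ud{j+m'}-1}.
\]

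Next I would match each of these two summands with the corresponding term in $\sigma_{j-1}\beta_{j-1} = \delta_{\ud{j-1}}(\omega_{\ud{j-1}} - \omega_{\ud{j-1+m'}})$. In the generic case $\ud{j}\neq k+1$, the indices $\ud{j}-1$ and $\ud{j-1}$ coincide inside $\{k+1,\ldots,k+m-1\}$, and the same holds on the $m'$-shifted side whenever $\ud{j+m'}\neq k+1$; nothing needs to be proved. The main obstacle is the boundary case $\ud{j}=k+1$, where $\ud{j}-1=k$ while $\ud{j-1}=k+m$, and the analogous case $\ud{j+m'}=k+1$. These two cases are not simultaneous since $m'<m$, so they can be handled separately and symmetrically.

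To resolve the boundary case, I would use that on $\M_{k,m}$ the relation $\beta v_{k+m}=v_k$ translates into the identity $\omega_k = \beta\,\omega_{k+m}$ of linear forms, while the same relation applied to the fixed point $\z$, together with $\beta^d=1$, gives $\delta_k = dz_k^{d-1} = \beta^{d-1}\delta_{k+m} = \beta^{-1}\delta_{k+m}$. Multiplying these together yields $\delta_k\omega_k = \delta_{k+m}\omega_{k+m}$ as elements of $\M_{k,m}^*$, which is exactly what is needed to replace $\delta_{\ud{j}-1}\omega_{\ud{j}-1}$ by $\delta_{\ud{j-1}}\omega_{\ud{j-1}} = \sigma_{j-1}\omega_{\ud{j-1}}$; the analogous substitution handles the $m'$-shifted summand. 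Assembling the two pieces gives $L^*\beta_j = \sigma_{j-1}\beta_{j-1}$, completing the proof.
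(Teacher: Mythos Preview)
Your approach is essentially the same as the paper's, and the boundary analysis using $\omega_k = \beta\,\omega_{k+m}$ together with $\delta_k = \beta^{-1}\delta_{k+m}$ (so that $\delta_k\omega_k = \delta_{k+m}\omega_{k+m}$) is exactly how the paper handles the wrap-around case. However, there is a small gap in your assembly step.

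After your boundary reduction, what you have actually established is
\[
L^*\beta_j \;=\; \sigma_{j-1}\,\omega_{\ud{j-1}} \;-\; \sigma_{j+m'-1}\,\omega_{\ud{j+m'-1}},
\]
with $\sigma_{j-1} = \delta_{\ud{j-1}}$ and $\sigma_{j+m'-1} = \delta_{\ud{j+m'-1}}$. To factor this as $\sigma_{j-1}\bigl(\omega_{\ud{j-1}} - \omega_{\ud{j-1+m'}}\bigr) = \sigma_{j-1}\beta_{j-1}$ you need $\sigma_{j-1} = \sigma_{j+m'-1}$, and this is not automatic: it is precisely where the hypothesis that $\z$ has exact type $(k,m')$ enters. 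Since $\ud{j-1}$ and $\ud{j+m'-1}$ both lie in $\{k+1,\ldots,k+m\}$ and are congruent modulo $m'$, the $m'$-periodicity of $\z$ beyond index $k$ gives $z_{\ud{j-1}} = z_{\ud{j+m'-1}}$, hence $\delta_{\ud{j-1}} = \delta_{\ud{j+m'-1}}$. The paper spells this out explicitly; you assert ``nothing needs to be proved'' at exactly the place where this observation is required. Once you insert it, your proof is complete and coincides with the paper's.
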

\begin{proof}
For $j \in \Z/m\Z$, recall that $\underline{j}$ is the representative of $j$ in $\ob k+1, k+m \cb$. Let us first prove that for all $j \in \Z/m\Z$, 
\begin{equation}\label{eq: L^* case preper}
L^*\omega_{\underline{j}} = \sigma_{{j-1}}\omega_{\ud{j-1}} + L^*\omega_1.
\end{equation}
Indeed, if $\ud{j} = k+1$ then $\ud{j-1} = k+m$, whence $\sigma_{j-1} \om_{\ud{j-1}} = \delta_{k+m} \om_{k+m}$. According to Lemma \ref{lem: pull-back of L}, we have $L^* \omega_{k+1} = \delta_k \omega_{k} + L^* \omega_1$. Since $\z \in \M_{k,m}$, we have \[\delta_{k} = dz_{k}^{d-1} = d \b^{d-1}z_{k+m}^{d-1} = \b^{d-1}\delta_{k+m}.\]Moreover, $\om_{k} = \b\om_{k+m}$, whence $\delta_{k} \om_{k} =\b^d \delta_{k+m} \om_{k+m}$. Since $\b^d = 1,$ 
\[
L^*\om_{k+1} = \delta_k \om_k +L^* \om_1=  \delta_{k+m} \om_{k+m} + L^*\om_1.
\]
If $\ud{j} \neq k+1$ then $\ud{j-1} = \ud{j}-1$. According to Lemma \ref{lem: pull-back of L}, we have
\[
L^*\om_{\ud{j}} = \delta_{\ud{j}-1} \om_{\ud{j}-1} + L^*\om_{1} = \delta_{\ud{j-1}} \om_{\ud{j-1}} + L^* \om_1 =\sigma_{j-1} \om_{\ud{j-1}} + L^*\om_1.
\]
In any case, we have the equality \eqref{eq: L^* case preper}. Hence
\[
L^*\beta_j = L^*(\om_{\ud{j}} - \om_{\ud{j+m'}}) =\sigma_{j-1} \om_{\ud{j-1}} -\sigma_{j+m'-1} \om_{\ud{j+m'-1}}.
\]
Note that $\ud{j-1} $ and $\ud{j+m'-1}$, which are congruence modulo $m'$, are two integers at least $k+1$. Since $\z \in \M_{k,m'}$, we have $\sigma_{j-1} = \delta_{\ud{j-1}} = \delta_{\ud{j+m'-1}} = \sigma_{j+m'-1}$. Thus
\[
L^*\beta_j = \sigma_{j-1}(\om_{\ud{j-1}} - \om_{\ud{j+m'-1}}) = \sigma_{j-1} \beta_{j-1} .\qedhere
\]
\end{proof}
Recall that $\lambda$ is the multiplier of $P(t)= t^d +z_1$ at the periodic point $z_{k+1}$ of period $m' = \frac{m}{p}$.
\begin{lemma}\label{lem: sigma}
For all $j \in \Z/m\Z$, \[(L^*)^{  m'}( \beta_j) =\lambda \beta_{j-m'} \quad \text{ and } \quad (L^*)^{  m} (\beta_j) = \lambda^p \beta_j.\]
\end{lemma}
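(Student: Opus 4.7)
The plan is to reduce everything to the one-step recursion already proved in Lemma \ref{lem: pull back preperiodic case}, namely $L^{*}\beta_{j}=\sigma_{j-1}\beta_{j-1}$, and then show that iterating it $m'$ times produces exactly the multiplier $\lambda$ of $P$ at the cycle of $z_{k+1}$.

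First I would iterate the recursion: by an immediate induction on $n\ge 1$,
\[
(L^{*})^{n}(\beta_{j}) \;=\; \Bigl(\prod_{i=1}^{n}\sigma_{j-i}\Bigr)\,\beta_{j-n}
\]
for every $j\in\Z/m\Z$. Specializing to $n=m'$, the first equality reduces to the identity $\prod_{i=1}^{m'}\sigma_{j-i}=\lambda$. This is the heart of the argument.

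Next I would unpack this product. By definition $\sigma_{j-i}=d\,z_{\underline{j-i}}^{d-1}=P'\!\bigl(z_{\underline{j-i}}\bigr)$. On the other hand, since $z_{k+1}$ is a periodic point of $P$ of period $m'$, the multiplier is
\[
\lambda \;=\; \prod_{\ell=1}^{m'} P'(z_{k+\ell}).
\]
Thus it suffices to check that, as $i$ runs through $\{1,\ldots,m'\}$, the values $z_{\underline{j-i}}$ are a permutation of $z_{k+1},\ldots,z_{k+m'}$. The representatives $\underline{j-1},\underline{j-2},\ldots,\underline{j-m'}$ are $m'$ consecutive integers in $\{k+1,\ldots,k+m\}$ (taken cyclically modulo $m$), and since $m'$ divides $m$, their residues modulo $m'$ cover $\{0,1,\ldots,m'-1\}$ exactly once. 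Because $\z\in\M_{k,m'}$ (recall $\z$ has exact type $(k,m')$ with $k'=k$), the sequence $(z_i)_{i\ge k+1}$ is periodic of period $m'$, so the multiset $\{z_{\underline{j-i}}\mid 1\le i\le m'\}$ equals $\{z_{k+1},\ldots,z_{k+m'}\}$. This gives $\prod_{i=1}^{m'}\sigma_{j-i}=\lambda$, hence $(L^{*})^{m'}(\beta_{j})=\lambda\,\beta_{j-m'}$.

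Finally, for the second equality I would iterate the first one $p$ times. Writing $m=pm'$,
\[
(L^{*})^{m}(\beta_{j}) \;=\; \bigl((L^{*})^{m'}\bigr)^{p}(\beta_{j}) \;=\; \lambda^{p}\,\beta_{j-pm'} \;=\; \lambda^{p}\,\beta_{j-m} \;=\; \lambda^{p}\,\beta_{j},
\]
the last equality using that the indices $j$ live in $\Z/m\Z$. The only nontrivial step is the combinatorial bookkeeping in the middle paragraph, so I would state cleanly that $m'\mid m$ is what makes the residues of $m'$ consecutive integers modulo $m'$ hit each class exactly once; everything else is formal.
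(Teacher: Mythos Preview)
Your proof is correct and follows essentially the same route as the paper: iterate Lemma~\ref{lem: pull back preperiodic case} and identify $\prod_{i=1}^{m'}\sigma_{j-i}$ with the multiplier $\lambda$. The only difference is cosmetic: the paper observes directly that $P(z_{\underline{j-1}})=z_{\underline{j}}$, so the $z_{\underline{j-i}}$ for $1\le i\le m'$ are the points of the cycle traversed in order and the product is the chain-rule expression for $(P^{\circ m'})'$ at a point of the cycle, whereas you reach the same conclusion via the residue-class bookkeeping modulo $m'$.
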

\begin{proof} The second equality is the straightforward consequence of the first one. Hence, it is enough to prove the first equality. According to Proposition \ref{prop: critical orbit}, $\z$ is the critical orbit of the polynomial $P(t) = t^d + z_1$, i.e. $z_i = P^{  i}(0)$. In particular, for any $j \in \Z/m\Z$, we have $P(z_{\ud{j-1}}) =z_{\ud{j}}$ and $P'(z_{\ud{j-1}})= dz_{\ud{j-1}}^{d-1} = \delta_{\ud{j-1}}$. Since $\z$ is of type $(k,m')$, the multiplier $\lambda$ of the cycle of $z_{\ud{j}}$ is 
	\[
	\lambda = \prod\limits_{i \in \ob 1,m' \cb} P'(z_{\ud{j-i}}) =\prod\limits_{i \in \ob 1,m' \cb} \delta_{\ud{j-i}} = \prod\limits_{i \in \ob 1,m' \cb} \sigma_{j-i}.
	\]
	Hence, by Lemma \ref{lem: pull back preperiodic case}, we have
	\[
	(L^*)^{ m'}(\beta_j) = \left(\prod\limits_{i \in \ob 1,m' \cb} \sigma_{j-i}\right) \beta_{j-m'} = \lambda \beta_{j-m'},
	\]
and 
	\[
	(L^*)^{  m}(\beta_j) = (L^*)^{m'p}(\beta_j)=\lambda^p \beta_j. \qedhere
	\]
\end{proof}
Let $\nu$ be a $m'$-th root of $\lambda$. Set
\[
T = \frac{L^*}{\nu}: \M_{k,m'}^0 \to \M_{k,m'}^0.
\]
We shall prove that $T$ is diagonalizable with simple eigenvalues and the eigenvalues of $T$ are $m$-th roots of unity except $1$. According to Lemma \ref{lem: sigma}, for all $j \in \Z/m\Z$, $T^{m'} (\beta_j) = \beta_{j-m'}$. In addition, 
\[\begin{array}{rcl}
\sum\limits_{n \in m'\Z/m\Z} \beta_{n} &=& \sum\limits_{n \in m'\Z/m\Z} (\om_{\ud{n}} - \om_{\ud{n+m'}}) =0
\\
&=& \sum\limits_{n \in m'\Z/m\Z} \omega_{\ud{n}} -\sum\limits_{n \in m'\Z/m\Z} \omega_{\ud{n+m'}} =0.
\end{array}\]
Recall that $p = \frac{m}{m'}$. Hence
\[
\beta_0 + T^{m'}(\beta_0) + \ldots + T^{m'(p-1)}(\beta_0) = 0
\]
Applying $m'-1$ times $T$ and adding the results, we deduce that
\[
\beta_0 + T(\beta_0) + T^2(\beta_0) + \ldots + T^{m'p-1}(\beta_0) = 0
\]
According to Lemma \ref{lem: structure of Omega} and Lemma \ref{lem: pull back preperiodic case}, the set $\{\beta_0,L^*(\beta_0),(L^*)^2(\beta_0),\ldots\}$ generates $\M_{k,m'}^0$. Hence $\{\beta_0,T(\beta_0),T^2(\beta_0),\ldots \}$ also generates $\M_{k,m'}^0$. Therefore,
\begin{equation}\label{eq: T}
\Id + T + T^2 + \ldots +T^{m-1} =0
\end{equation}
This means that the minimal polynomial of $T$ divides the polynomial $1+X+X^2+\ldots+X^{m-1}$. Consequently, $T$ is diagonalizable and the eigenvalues of $T$ are roots of unity which are not $1$. We now show that $T$ has only simple eigenvalues. Assume $\zeta \in \Spec T$. Let $v \in \M_{k,m'}$ be an eigenvector associated to $\zeta$. Set 
\[
H_\zeta = \frac{1}{m}\left(\Id+ \frac{T}{\zeta}+\ldots+\frac{T^{m-1}}{\zeta^{m-1}}\right).
\]
The equality \eqref{eq: T} implies that $T^m = \Id$. Additionally, $\zeta^m= 1$. Hence \[H_\zeta \circ \frac{T}{\zeta}= H_\zeta \quad \text{ so that } \quad H_\zeta \circ T^j = \zeta^j H_\zeta \quad \forall j \ge 1.
\] In addition, $\{ \beta_0,T(\beta_0),\ldots \}$ generates $\M_{k,m'}^0$, hence
\[
\Imm H_\zeta \subseteq \Span H_\zeta(\beta_0).
\]
Note that $H_\zeta(v) = v$. Hence
\[
v \subseteq \Imm H_\zeta \subseteq \Span H_\zeta(\beta_0).
\]
Thus the eigenspace associated to the eigenvalue $\zeta$ of $T$ has dimension $1$, i.e. $T$ has only simple eigenvalues.  

Since $T = \frac{L^*}{\nu}$, $L^*$ is diagonalizable with simple eigenvalues which are $m$-th roots of $\nu^m = \lambda^p$. In addition, $1$ is not an eigenvalue of $T$ hence $\nu$ is not an eigenvalue of $L^*$. Since $\nu$ is an arbitrary $m'$-th root of $\lambda$, we deduce that 
\[
\Spec L^* \subseteq \{ \mu^m = \lambda^p, \mu^{m'} \neq \lambda \}.
\]
Since $L^*$ has only simple eigenvalues, $\# \Spec L^* = \dim \M_{k,m'}^0 = m-m'$. Hence
\[
\Spec (L^*: \M_{k,m'}^0 \to \M_{k,m'}^0) = \{ \mu^m = \lambda^p, \mu^{m'} \neq \lambda \} \qedhere
\]
\end{proof}

\begin{proof}[Proof of Theorem \ref{thm:theoremA'}]
	According to Lemma \ref{prop: FKm and GKm}, $F_{k,m}$ and $G_{k,m}$ are conjugate. Hence it is enough to consider an eigenvalue $\mu$ of $F_{k,m}$ at a fixed point $\z \in \M_{k,m}$. Denote by $(k',m')$ the exact type of $\z$. Then by Proposition \ref{prop: critical orbit}, we have $(k',m') \preceq (k,m)$, or by Definition \ref{def: partial order}, we have
	\[
	m' \mid m, (\text{either }  k' =0 \text{ or } k' = k,m' \mid k).
	\]
	
Regarding $\mu$ as an eigenvalue of $\Fkm$ at $\z$. We recall argument at the beginning of \ref{sec:fixed_point_inside}, according to Lemma \ref{lem: position of fixed points}, $(k',m') \preceq (k,m)$ and, by Proposition \ref{prop: partial order}, $\M_{k',m'} \subseteq \M_{k,m}$ is invariant under $F_{k,m}$. Since $\M_{k',m'}$ is invariant under $D_{\z}F_{k,m}$, the annihilator $\M_{k',m'}^0$ of $ \M_{k',m'}$ in $\M_{k,m}$ is invariant under the transpose $L^*$ of $D_{z} F_{k,m}$ and we have the following decomposition
\begin{equation}
\Spec L = \Spec(L|_{ \M_{k',m'}}) \cup \Spec\left( L^*|_{  \M_{k',m'}^0} \right).
\end{equation}
Moreover, according to Proposition \ref{prop: partial order}, we have
\[
L|_{\M_{k',m'}} = D_{\z} F_{k',m'}.
\]

If $\mu \in  \Spec(L|_{ \M_{k',m'}})$, note that $L|_{\M_{k',m'}} = D_{\z} F_{k',m'}$ and $\z \notin PC_{k',m'}$, thus $\mu$ is an eigenvalue of $G_{k',m'}$ at a fixed point outside its post-critical set.

If $\mu \in \Spec\left( L^*|_{  \M_{k',m'}^0} \right)$, then we are done by Proposition \ref{prop: eigenvalue inside}.
\end{proof}
\bibliography{ref}
\bibliographystyle{alpha}
\end{document}